\newtheorem{theorem}{Theorem}[section]
\newmdtheoremenv{boxtheo}[theorem]{Theorem}
\newtheorem{proposition}[theorem]{Proposition}
\newtheorem{corollary}{Corollary}[theorem]
\newtheorem{lemma}[theorem]{Lemma}
\theoremstyle{definition}
\newtheorem{definition}[theorem]{Definition}
\newtheorem{example}[theorem]{Example}
\theoremstyle{remark}
\newtheorem{remark}{Remark}
\newcommand{\pp}{$^\prime$}
\newcommand{\realify}[1]{[#1]_\mathbb{R}}
\newcommand{\mi}{\!\!\!\scalebox{0.75}[1.0]{$-$}}
\newcommand{\mis}{\,\hspace{-0.12cm}\scalebox{0.4}[0.7]{\( - \)}}
\newcommand{\RN}[1]{%
  \textup{\uppercase\expandafter{\romannumeral#1}}%
}
\newcommand{\rn}[1]{%
  \textup{\lowercase\expandafter{\romannumeral#1}}%
}
\newcommand*\circled[1]{\tikz[baseline=(char.base)]{
            \node[shape=circle,draw,inner sep=2pt] (char) {#1};}}
\DeclareMathOperator{\cent}{cent}
\DeclareMathOperator{\cosq}{cosq}
\DeclareMathOperator{\sol}{sol}
\DeclareMathOperator{\cosol}{cosol}
\begin{document}

\title[Solutions to the matrix equation $G^*JG=J$]{On the structure of the solutions to the matrix equation $G^*JG=J$}

\author{Alan Edelman}
\address{Department of Mathematics and Computer Science \& AI Laboratory, Massachusetts Institute of Technology, Cambridge, Massachusetts, 02139}
\email{edelman@mit.edu}

\author{Sungwoo Jeong}
\address{Department of Mathematics, Massachusetts Institute of Technology, Cambridge, Massachusetts, 02139}
\email{sw2030@mit.edu}

\subjclass[2010]{Primary 15A24, 22E70 ; Secondary 15A22, 11E57}
\keywords{Automorphism group, Lie group, Matrix Congruence}

\begin{abstract}
We study the mathematical structure of the solution set (and its tangent space) to the matrix equation $G^*\!JG=J$ for a given square matrix $J$. In the language of pure mathematics, this is a Lie group which is the isometry group for a bilinear (or a sesquilinear) form. Generally these groups are described as intersections of a few special groups.

\par The tangent space to $\{G: G^*\!JG=J \}$ consists of solutions to the linear matrix equation $X^*\!J+JX=0$. For the complex case, the solution set of this linear equation was computed by De Ter{\'a}n and Dopico.

\par We found that on its own, the equation $X^*\!J+JX=0$ is hard to solve. By throwing into the mix the complementary linear equation $X^*\!J-JX=0$, we find that the direct sum of the two solution sets is an easier to compute linear space. Thus, we obtain the two solution sets from projection maps. Not only is it possible to now solve the original problem, but we can approach the broader algebraic and geometric structure. One implication is that the two equations form an $\mathfrak{h}$ and $\mathfrak{m}$ pair familiar in the study of pseudo-Riemannian symmetric spaces. 

\par We explicitly demonstrate the computation of the solutions to the equation $X^*\!J\pm XJ=0$ for real and complex matrices. However, real, complex or quaternionic case with an arbitrary involution (e.g., transpose, conjugate transpose, and the various quaternion transposes) can be effectively solved with the same strategy. We provide numerical examples and visualizations.
\end{abstract}



\maketitle

\section{Introduction}
We study the structure of the matrix group $\{G : G$ invertible\footnote{In the following we will always assume invertibility of $G$.}$\text{, } G^*\!JG =J\}$ and its tangent space at the identity $\{X:X^*\!J+JX =0\}$. We assume $J$ is a given square matrix and the ``star" superscript, $G^*$, is either $G^T$ (the usual matrix transposition) or $G^H$ (conjugate transposition). Previous work related to this question may be found in \cite{de2011conjtrans,de2011trans,dmytryshyn2015change,mackey2003structured,mackey2005structured,riehm1974equivalence,szechtman2005structure}.


\par The group $\{G : G^*\!JG = J\}$ is often called the \textit{automorphism group} or the \textit{isometry group} (of a bilinear/sesquilinear form) \cite{jacobson2009basic,lang2002algebra,mackey2003structured}. Given a bilinear form $\langle x, y \rangle_J = x^TJy$ or a sesquilinear form $\langle x, y\rangle_J = x^H Jy$, the automorphism group is the collection of linear operators that preserve this form, i.e., $\langle x, y\rangle_J = \langle Gx, Gy \rangle_J$. Representing the linear operators as matrices, they are the solutions $G$ to the matrix quadratic equation $G^*\!JG = J$. For some special $J$'s, the automorphism groups are well known as classical Lie groups \cite{weyl1946classical}. 

Three closely related questions are the strict equivalence of pencils of the form $J+\lambda J^T$ \cite{dmytryshyn2015change,dmytryshyn2013codimension,schroder2006canonical}, the orbits of matrices under the congruence transformation $J \mapsto KJK^*$ where $K$ is invertible \cite{horn2007canonical,turnbull1932introduction}, and the question of similar automorphism groups (the focus of this paper).

The familiar example from elementary linear algebra would be $J\!=\!I_n$, where the automorphism group (of the real bilinear form) is the orthogonal group $\text{O}(n)$  (all orthogonal matrices). Another key example is $J = \begin{bsmallmatrix}\,\,0 & I_n \\ -I_n & 0\end{bsmallmatrix}$, namely the skew-symmetric bilinear form, where the automorphism group is the symplectic group $\text{Sp}(2n, \mathbb{R})$ that appears in symplectic geometry and classical mechanics. Let us define notations which come up frequently for real and complex cases.
\begin{definition}
For a complex $J\in\mathbb{C}^{n\times n}$, define groups $G_J$ and $G_J^H$ as follows:
\begin{equation*}
    G_J := \{G:G^T\!JG = J, G\in\mathbb{C}^{n\times n}\}, \hspace{0.5cm} G_J^H := \{G:G^H\!JG = J, G\in\mathbb{C}^{n\times n}\}.
\end{equation*}
Also for real $J\in\mathbb{R}^{n\times n}$, define $G_J^\mathbb{R} := \{G: G^T\!JG=J, G\in\mathbb{R}^{n\times n}\}$.
\end{definition}

\begin{figure}[h]
    \centering
    \title{\large Visualizations for $G_J^\mathbb{R}$, generic $J\in\mathbb{R}^{4\times 4}$}
    \fbox{\includegraphics[width=4.9in]{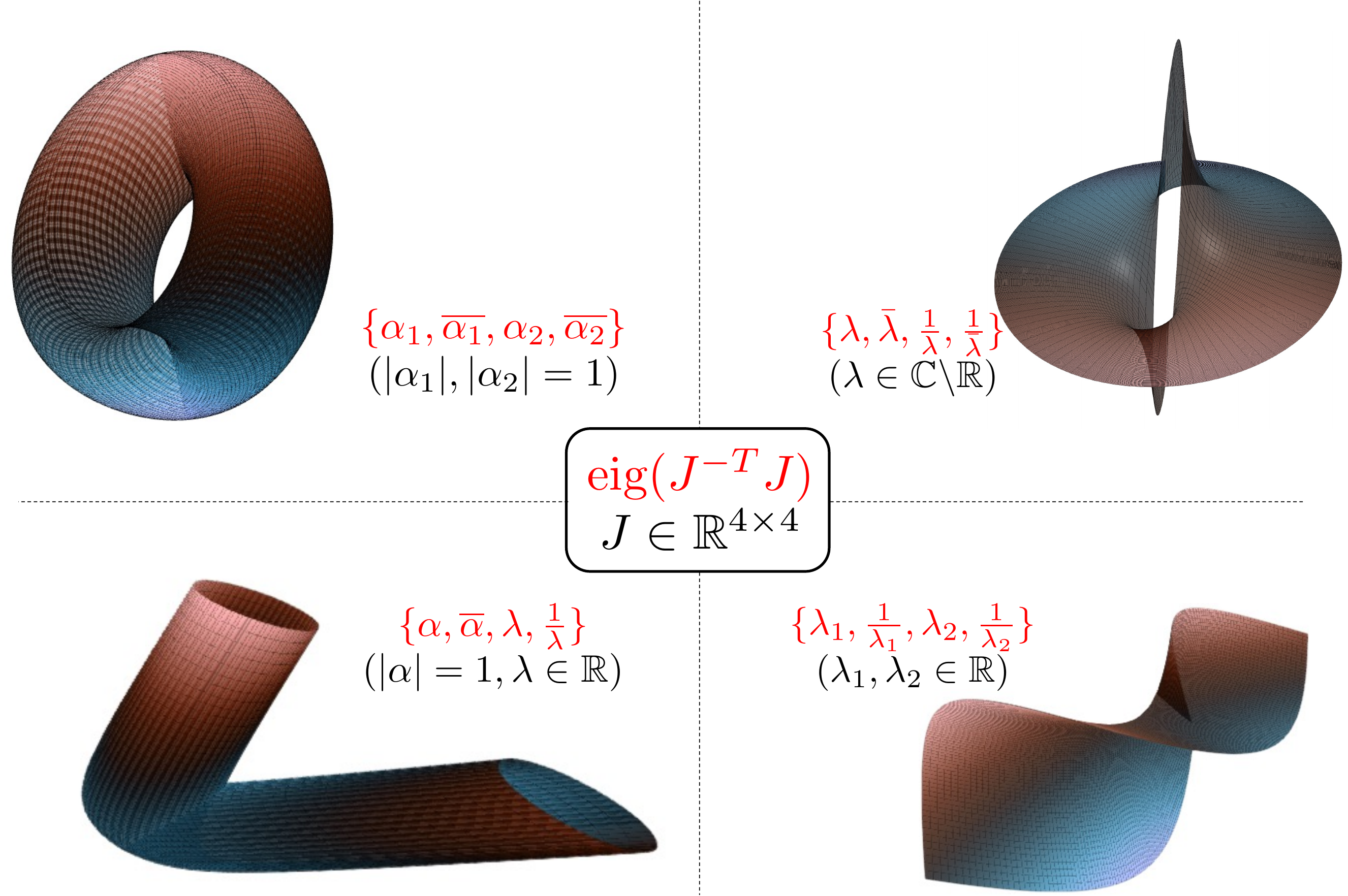}}
    \caption{Four types of generic $4\times 4$ real $J$'s classified by the eigenvalue characteristic of $J^{-T}\!J$ and the visualizations of corresponding $G_J^\mathbb{R}$. Plots are three dimensional projections (from 16 dimensions) of the identity component of the surface $G_J^\mathbb{R}$. See Section \ref{sec:4x4generic} for more details.}
    \label{fig:4x4plots} 
\end{figure}

\begin{remark}
For a $4\times 4$ real $J$, the case $J\!=\!I_4$ which leads to the orthogonal group $\text{O}(4)$ is not at all a generic case.\footnote{Note that $J$ is generic if it is nonsingular and $J^{-T}\!J$ has no double eigenvalues, which is essentially excluding the ``special" cases we will describe in Sections \ref{sec:background} to \ref{sec:realdimcount}.} The reader would do well to wonder what the generic cases look like. There are four possible generic cases whose solution in each case is a two dimensional surface in 16 dimensions as illustrated in Figure \ref{fig:4x4plots}, with details following in Section \ref{sec:4x4generic}. The following Example \ref{ex:4x4example} considers one of these generic cases.
\end{remark}

\par For an arbitrary $J$ the group $\{G:G^*\!JG=J\}$ is known to be the intersection of the groups determined by the symmetric and skew-symmetric (Hermitian and skew-Hermitian if $*=H$) parts of $J$ \cite[p.92]{rossmann2002lie}. For example if $J$ is real $2n\times 2n$, decompose $J$ into its symmetric/skew-symmetric parts $J=S+A$ and assume $S, A$ nonsingular. Let $(p,q)$ be the signature of $S$. We obtain the two groups $G_S^\mathbb{R}$ and $G_A^\mathbb{R}$ each isomorphic (not necessarily identical) to $\text{O}(p, q)$ and $\text{Sp}(2n, \mathbb{R})$, such that the intersection equals $G_J^\mathbb{R}$. Correspondingly, the tangent space $\{X:X^T\!J+JX=0\}$ is the intersection of the tangent spaces of the two groups. 

\par Though it is known that the groups are intersections of two other groups, it seems if one wants to find the intersection computationally one might have to perform linear algebra operations such as the SVD on $n^2\times n^2$ matrices with a prohibitive dense complexity of $O(n^6)$. In this paper, we demonstrate an approach using the generalized eigenstructure that directly provides a basis for the intersection.

\begin{example}\label{ex:4x4example}
In this example we define a $J\in\mathbb{R}^{4\times 4}$ where the symmetric part of $J$ is positive definite. The group $G_J^\mathbb{R}$ is the intersection of groups similar to $\text{O}(4)$ and $\text{Sp}(4, \mathbb{R})$. Let 
\begin{gather*}
    J = \text{\scalebox{0.8}[0.8]{
    $\begin{bmatrix}\begin{array}{rrrr}
    1 & 1 & \mi1 & \mi1\\ 
    \mi1 & 1 &  \cdot & \mi1\\  
    1 & \cdot & 1 & \mi1 \\ 
    1 & 1  & 1  & 1
    \end{array}\end{bmatrix}$}} = \underbrace{\text{\scalebox{0.8}[0.8]{$\begin{bmatrix}1&\cdot&\cdot&\cdot\\\cdot&1&\cdot&\cdot\\\cdot&\cdot&1&\cdot\\\cdot&\cdot&\cdot&1\end{bmatrix}$}}}_{\text{symmetric part $I_4$}} + \underbrace{\text{\scalebox{0.8}[0.8]{$\begin{bmatrix}\begin{array}{rrrr}\cdot & 1 & \mi1 & \mi1\\ \mi1 & \cdot &  \cdot & \mi1\\  1 & \cdot & \cdot & \mi1 \\ 1 & 1  & 1  & \cdot\end{array}\end{bmatrix}$}}}_{\text{skew-symmetric part $A$}} ,\\
    A = K^T\text{\scalebox{0.8}[0.8]{$\begin{bmatrix}\begin{array}{rrrr}\cdot&\cdot&1&\cdot\\\cdot&\cdot&\cdot&1\\ \mi1&\cdot&\cdot&\cdot\\\cdot&\mi1&\cdot&\cdot\end{array}\end{bmatrix}$}}K, \hspace{0.3cm}\text{ where }
    K=\text{\scalebox{0.8}[0.8]{$\begin{bmatrix}
         \cdot & 1 & 1 & 1\\2 & \cdot & \cdot & \cdot \\ 1 & 1 & 1 & \cdot \\ \cdot & 1 & \cdot & \cdot
    \end{bmatrix}$}}.
\end{gather*}
Then, the group $G_J^\mathbb{R}$ and its Lie algebra $\mathfrak{g}_J^\mathbb{R}$ are defined as,  
\begin{equation*}
    G_J^\mathbb{R} = \text{O}(4)\cap\big(K^{-1}\text{Sp}(4, \mathbb{R})K\big), \hspace{1cm} \mathfrak{g}_J^\mathbb{R} = \mathfrak{o}(4)\cap \big(K^{-1}\mathfrak{sp}(4, \mathbb{R})K\big).
\end{equation*}
$\mathfrak{o}(4) = \{\text{All skew-symmetric matrices}\}$ is 6 dimensional and $\mathfrak{sp}(4,\mathbb{R})$ is a 10 dimensional linear subspace with the following standard basis:
\begin{gather*}
    \mathfrak{sp}(4, \mathbb{R}):
    \bigg\{\begin{bsmallmatrix} 1 &  \cdot &   \cdot &  \cdot \\ \cdot &  \cdot &   \cdot &  \cdot  \\\cdot &  \cdot &  \mis 1 &  \cdot \\ \cdot &  \cdot &   \cdot &  \cdot \end{bsmallmatrix},
    \begin{bsmallmatrix} \cdot &  \cdot &  \cdot &   \cdot  \\ 1 &  \cdot &  \cdot &   \cdot  \\ \cdot &  \cdot &  \cdot &  \mis 1  \\ \cdot &  \cdot &  \cdot &   \cdot
\end{bsmallmatrix},\begin{bsmallmatrix}
 \cdot&  1 &   \cdot&  \cdot\\
 \cdot&  \cdot&   \cdot&  \cdot\\
 \cdot&  \cdot&   \cdot&  \cdot\\
 \cdot&  \cdot&  \mis 1 &  \cdot
\end{bsmallmatrix},\begin{bsmallmatrix}
 \cdot &  \cdot &  \cdot &    \cdot \\
 \cdot &  1 &  \cdot &    \cdot \\
 \cdot &  \cdot &  \cdot &    \cdot \\
 \cdot &  \cdot &  \cdot &  \mis 1 
\end{bsmallmatrix},\begin{bsmallmatrix}
 \cdot &  \cdot &  1 &  \cdot \\
 \cdot &  \cdot &  \cdot &  \cdot \\
 \cdot &  \cdot &  \cdot &  \cdot \\
 \cdot &  \cdot &  \cdot &  \cdot 
\end{bsmallmatrix},\hspace{2cm}\\
\hspace{2cm}\begin{bsmallmatrix}
 \cdot &  \cdot &  \cdot &  \cdot\\
 \cdot &  \cdot &  \cdot &  1 \\
 \cdot &  \cdot &  \cdot &  \cdot \\
 \cdot &  \cdot &  \cdot &  \cdot 
\end{bsmallmatrix},\begin{bsmallmatrix}
 \cdot &  \cdot &  \cdot &  1 \\
 \cdot &  \cdot &  1 &  \cdot \\
 \cdot &  \cdot &  \cdot &  \cdot \\
 \cdot &  \cdot &  \cdot &  \cdot 
\end{bsmallmatrix},\begin{bsmallmatrix}
 \cdot &  \cdot &  \cdot &  \cdot \\
 \cdot &  \cdot &  \cdot &  \cdot \\
 1 &  \cdot &  \cdot &  \cdot \\
 \cdot &  \cdot &  \cdot &  \cdot 
\end{bsmallmatrix},\begin{bsmallmatrix}
 \cdot &  \cdot &  \cdot &  \cdot \\
 \cdot &  \cdot &  \cdot &  \cdot \\
 \cdot &  \cdot &  \cdot &  \cdot \\
 \cdot &  1 &  \cdot &  \cdot 
\end{bsmallmatrix},\begin{bsmallmatrix}
 \cdot &  \cdot &  \cdot &  \cdot \\
 \cdot &  \cdot &  \cdot &  \cdot \\
 \cdot &  1 &  \cdot &  \cdot \\
 1 &  \cdot &  \cdot &  \cdot  \end{bsmallmatrix}\bigg\}.
\end{gather*}
Writing down a basis of $\mathfrak{o}(4)$ and $K^{-1}\mathfrak{sp}(4,\mathbb{R})K$:
\begin{gather*}
    \mathfrak{o}(4) : \bigg\{\begin{bsmallmatrix} \cdot&  1&  \cdot&  \cdot\\ \mis 1&  \cdot&  \cdot&  \cdot \\   \cdot&  \cdot&  \cdot&  \cdot \\   \cdot&  \cdot&  \cdot&  \cdot    \end{bsmallmatrix},
    \begin{bsmallmatrix} \cdot&  \cdot&  1&  \cdot \\ \cdot&  \cdot&  \cdot&  \cdot \\ \mis1&  \cdot&  \cdot&  \cdot\\  \cdot&  \cdot&  \cdot&  \cdot    \end{bsmallmatrix},
    \begin{bsmallmatrix}  \cdot&   \cdot&  \cdot&  \cdot\\ \cdot& \cdot&  1&  \cdot \\ \cdot&  \mis1&  \cdot&  \cdot \\ \cdot&  \cdot&  \cdot&  \cdot    \end{bsmallmatrix},
    \begin{bsmallmatrix} \cdot&  \cdot&  \cdot&  1 \\  \cdot&  \cdot&  \cdot&  \cdot \\  \cdot&  \cdot&  \cdot&  \cdot \\ \mis1&  \cdot&  \cdot&  \cdot  \end{bsmallmatrix},
    \begin{bsmallmatrix} \cdot&   \cdot&  \cdot&  \cdot \\ \cdot&   \cdot&  \cdot&  1 \\ \cdot&   \cdot&  \cdot&  \cdot \\ \cdot&  \mis1&  \cdot&  \cdot
    \end{bsmallmatrix},
    \begin{bsmallmatrix} \cdot&  \cdot&   \cdot&  \cdot \\ \cdot&  \cdot&   \cdot&  \cdot \\  \cdot&  \cdot&   \cdot&  1 \\ \cdot&  \cdot&  \mis1&  \cdot
    \end{bsmallmatrix}\bigg\}, \\
K^{-1}\mathfrak{sp}(4, \mathbb{R})K: \bigg\{\begin{bsmallmatrix}
  \cdot &   \cdot &   \cdot &  \cdot \\
  \cdot &   \cdot &   \cdot &  \cdot \\
 \mis1 &  \mis1 &  \mis1 &  \cdot \\
  1 &   2 &   2 &  1 
\end{bsmallmatrix}, {\scalebox{1.2}{$\frac{1}{2}$}}\begin{bsmallmatrix}
 \cdot &   1 &   1 &   1 \\
 \cdot &   \cdot &   \cdot &   \cdot \\
 \cdot &  \mis3 &  \mis1 &  \mis1 \\
 \cdot &   3 &   1 &   1 
\end{bsmallmatrix}, \begin{bsmallmatrix}
  \cdot &   \cdot &   \cdot &  \cdot \\
 \mis1 &  \mis1 &  \mis1 &  \cdot \\
  1 &   1 &   1 &  \cdot \\
  2 &   \cdot &   \cdot &  \cdot 
\end{bsmallmatrix}, \begin{bsmallmatrix}
  1 &   \cdot &  \cdot &  \cdot \\
  \cdot &  \mis1 &  \cdot &  \cdot \\
 \mis1 &   1 &  \cdot &  \cdot \\
  1 &   \cdot &  \cdot &  \cdot 
\end{bsmallmatrix}, \begin{bsmallmatrix}
 \cdot &  \cdot &  \cdot &  \cdot \\
 \cdot &  \cdot &  \cdot &  \cdot \\
 \cdot &  \cdot &  \cdot &  \cdot \\
 1 &  1 &  1 &  \cdot 
\end{bsmallmatrix},\hspace{1cm}\, \\
\hspace{3.5cm}{\scalebox{1.2}{$\frac{1}{2}$}}\begin{bsmallmatrix}
 \cdot &   1 &  \cdot &  \cdot \\
 \cdot &   \cdot &  \cdot &  \cdot \\
 \cdot &  \mis1 &  \cdot &  \cdot \\
 \cdot &   1 &  \cdot &  \cdot 
\end{bsmallmatrix}, {\scalebox{1.2}{$\frac{1}{2}$}}\begin{bsmallmatrix}
  1 &   1 &   1 &  \cdot \\
  \cdot &   \cdot &   \cdot &  \cdot \\
 \mis1 &  \mis1 &  \mis1 &  \cdot \\
  1 &   3 &   1 &  \cdot 
\end{bsmallmatrix}, \begin{bsmallmatrix}
 \cdot &   \cdot &   \cdot &   \cdot \\
 \cdot &   \cdot &   \cdot &   \cdot \\
 \cdot & 1 &   1 & 1 \\
 \cdot &  \mis1 &  \mis1 &  \mis1 
\end{bsmallmatrix}, \begin{bsmallmatrix}
  \cdot &  \cdot &  \cdot &  \cdot \\
  2 &  \cdot &  \cdot &  \cdot \\
 \mis2 &  \cdot &  \cdot &  \cdot \\
  \cdot &  \cdot &  \cdot &  \cdot
\end{bsmallmatrix}, \begin{bsmallmatrix}
  \cdot &   \cdot &   \cdot &   \cdot \\
  \cdot &   1 &   1 & 1 \\
 2 &  \mis1 &  \mis1 &  \mis1 \\
 \mis2 &   \cdot &   \cdot &   \cdot 
 \end{bsmallmatrix}\bigg\}
\end{gather*}
Computing the intersection of $\mathfrak{o}(4)$ and $K^{-1}\mathfrak{sp}(4, \mathbb{R})K$, we finally obtain a basis: 
\begin{equation*}
\mathfrak{g}_J^\mathbb{R} = \mathfrak{o}(4)\cap\,(K^{-1}\mathfrak{sp}(4, \mathbb{R})K) = \text{Span}\Bigg(
    \text{\scalebox{0.7}[0.7]{$\begin{bmatrix}\begin{array}{rrrr}
     \cdot &  1 &  \mi1 &  \mi2\\\mi1 &  \cdot &  \mi1 &  \mi1 \\ 1 &  1 &   \cdot &  \mi1 \\  2 &  1 &   1 &   \cdot 
    \end{array}\end{bmatrix}$}}, \text{\scalebox{0.7}[0.7]{$\begin{bmatrix}\begin{array}{rrrr}
    \cdot &  \mi1 &   1 &  \mi 1 \\ 1 &   \cdot &  \mi2 &   1 \\ \mi1 &   2 &   \cdot &   1 \\  1 &  \mi1 &  \mi1 &   \cdot 
    \end{array}\end{bmatrix}$}}\Bigg).
\end{equation*}
\end{example}

\par As one can see in the above example, working with the tangent space is less complicated as it is linear. In practice, the matrices in the tangent space could be mapped to $G_J^\mathbb{R}$ by the exponential map. For details on the exponential map, see Section \ref{sec:exp}. One of our main focuses is the outline of a computation of a basis of the tangent space $\{X:X^*\!J+JX=0\}$. A key observation is that the direct sum of the two solution sets $\{X:X^*\!J\pm JX=0\}$ is a well-known and easier to compute linear spaces. In particular if $J$ is nonsingular the direct sum is the centralizer of $J^{-*}\!J$ (Corollary \ref{cor:solcosoldirectsum}). The bases for the solution sets could be computed by projection maps. Additionally we obtain a basis of $\{X:X^*\!J - JX=0\}$ as a byproduct.

\par With a more direct strategy, De Ter\'{a}n and Dopico \cite{de2011conjtrans,de2011trans} computed the solutions of $XJ+JX^T\!=\!0$ and $XJ+JX^H\!=\!0$ for complex matrices\footnote{Obviously, the transposed solution sets of \cite{de2011trans,de2011conjtrans} are equivalent to our solution sets.} using the congruence canonical form\footnote{We refer to De Ter{\'a}n's clear summary \cite{de2016canonical} for different types of congruence canonical forms.} studied by Horn and Sergeichuk \cite{horn2007canonical}. One needs only to compute the solutions for canonical $J$'s of the congruence transformation $J\mapsto K^*\!JK$ since the groups $\{G:G^*\!JG=J\}$ are similar for congruent $J$'s. They carefully worked out case-by-case solutions for each canonical form and their interactions.\footnote{The canonical forms of the congruence transformation is related to the $*$-palindromic matrix pencil $J-\lambda J^*$. The results in \cite{de2011conjtrans,de2011trans} are further extended using this idea in \cite{de2013solution}, where the authors use the Kronecker structure of general matrix pencils.} Some complicated cases were later brought to the explicit expressions in \cite{chan2013matrix,garcia2013matrix}. 

\par Our approach solves the equation $X^*\!J+JX=0$ in a more general setting (regardless of real, complex, and quaternion) by directly exploring structure. In particular, we point out that the relationship between solutions of $X^T\!J\pm JX=0$ brings to mind the structure of \textit{symmetric spaces} \cite{Helgason1978}. For a nonsingular $J$ we have 
\begin{equation*}
    \{g : g(J^{-T}J)=(J^{-T}J)g\} = \{h: h^TJ+Jh=0\} \oplus \{m : m^TJ-Jm=0\},
\end{equation*}
which is the Lie algebra decomposition of a pseudo-Riemannian symmetric space $\mathfrak{g} = \mathfrak{h}\oplus\mathfrak{m}$. Furthermore the centralizer of the \textit{cosquare} $J^{-T}\!J$ has a well-known structure we can adapt to solve the equations $X^T\!J\pm JX=0$ at hand.

\par The situation gets complicated when it comes to a singular $J$ since the cosquare $J^{-T}\!J$ is no longer well defined. However the theory of \textit{matrix pencils} is broad enough to cover such cases. The \textit{Kronecker structure} \cite{gantmacher1964theory,kagstrom1982matrix,kublanovskaya1966method,van1979computation} of the matrix pencil $J - \lambda J^T$ provides a generalization of the eigenstructure of the cosquare, enabling one to compute the solution set of $X^T\!J+JX=0$ in a similar manner. By revealing the structure, our approach is advantageous since it is not limited to the complex case. Moreover it can be applied to the situation when we have an involution other than the complex conjugation. 

\par The outline of the paper is the following: In Section \ref{sec:background} we provide background for solving the equations $X^*\!J\pm JX=0$ and investigate some small sized groups $\{G:G^*\!JG = J\}$. The main tools developed are Theorems \ref{thm:complexTsolcosol}, \ref{thm:complexHsolcosol}, and \ref{thm:realsolcosol} which provide an explicit way to construct a basis of the solution set of $X^*\!J\pm JX=0$. We also demonstrate the analysis on structures of $2\times 2$ matrix groups $G_J^\mathbb{R}$ in Section \ref{sec:2x2structure}. In Sections \ref{sec:complexTdimcount} to \ref{sec:realdimcount} we determine precise bases of the solution sets to $X^*\!J\pm JX=0$. In Section \ref{sec:numerical} we discuss numerical details of the computation and visualization of automorphism groups and their tangent spaces.

\section{Background}\label{sec:background}

\subsection{Warm up : Centralizer and Jordan (generalized eigenvector) chains}\label{sec:centralizer}
Given an $n\times n$ matrix $A$, the set of all matrices that commute with $A$ is the \textit{centralizer} (in operator theory, \textit{commutant}) of $A$, denoted by $\cent(A)$. Typically $\cent(A)$ can be obtained directly from the Jordan canonical form of $A$ \cite{arnold1971matrices,gantmacher1964theory}.

\par One way of describing $\cent(A)$ is using Jordan chains (i.e., generalized eigenvector chains) of $A$. Fix an eigenvalue $\lambda$ and let $r_1, \dots, r_k$ be the sizes of the Jordan blocks. Select matrices $W_1, \dots, W_k$ with their columns filled with Jordan chains so that $AW_j = W_j J_{r_j}^\lambda$ holds for the $r_j\times r_j$ Jordan block $J_{r_j}^\lambda$. Similarly, choose Jordan chain matrices $P_1, \dots, P_k$ of $A^T$. Then, $\cent(A)$ is constructed as follows.
\begin{definition}\label{def:backwardsid}
Denote the $n\times n$ backwards identity matrix by $E_n$. For $j\leq \min(s, t)$ define $E_j^{s, t}$, the $s\times t$ matrix with zeros except for its upper left $j\times j$ corner being $E_j$.  
\end{definition}

\begin{lemma}\label{lem:basiscent}
Let $W\in \mathbb{F}^{n\times a}, P\in\mathbb{F}^{n\times b}$ be Jordan chain matrices corresponding to the eigenvalue $\lambda$ of $A$ and $A^T$, respectively. (They may not correspond to the same Jordan block.) Then the collection of the matrices ($m = \min(a, b)$)
\begin{equation}\label{eq:basiscent}
    W E_1^{a,b} P^T, W E_2^{a,b} P^T, \dots, W E_m^{a,b} P^T,
\end{equation}
for all $\lambda$ (and all combinations of $W, P$) span $\cent(A)$. 
\end{lemma}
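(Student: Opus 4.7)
The plan is to verify the containment $WE_j^{a,b}P^T\in\cent(A)$ first via a clean algebraic identity, and then to establish spanning by reducing to Jordan canonical form and matching dimensions block-by-block.

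For the containment step, I would start from the Jordan chain relations $AW=WJ_a^\lambda$ and $A^TP=PJ_b^\lambda$; the latter transposes to $P^TA=(J_b^\lambda)^TP^T$. Hence
\begin{equation*}
A\cdot WE_j^{a,b}P^T \;=\; W\bigl(J_a^\lambda E_j^{a,b}\bigr)P^T, \qquad WE_j^{a,b}P^T\cdot A \;=\; W\bigl(E_j^{a,b}(J_b^\lambda)^T\bigr)P^T,
\end{equation*}
so the commutation $A(WE_j^{a,b}P^T)=(WE_j^{a,b}P^T)A$ reduces to the combinatorial matrix identity $J_a^\lambda E_j^{a,b}=E_j^{a,b}(J_b^\lambda)^T$. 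Writing $J_c^\lambda=\lambda I_c+N_c$ with $N_c$ the superdiagonal nilpotent, the $\lambda I$ contributions cancel, leaving the cleaner identity $N_aE_j^{a,b}=E_j^{a,b}N_b^T$. A direct entrywise computation confirms both sides have $1$'s precisely at positions $(i,j-i)$ for $i=1,\dots,j-1$ and vanish elsewhere.

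For the spanning step, I would first reduce to Jordan form: if $A=SJS^{-1}$, then the columns of $S^{-1}W$ and $S^TP$ are Jordan chains for $J$ and $J^T$ respectively, and $WE_j^{a,b}P^T=S(S^{-1}W)E_j^{a,b}(S^TP)^TS^{-1}$, so the construction is equivariant under similarity and $\cent(A)=S\,\cent(J)\,S^{-1}$. Thus assume $A=J$. The Sylvester principle (disjoint spectra of blocks at distinct eigenvalues force vanishing cross-blocks) lets me work one eigenvalue at a time. Fix $\lambda$ with Jordan blocks of sizes $r_1,\dots,r_k$. Choose canonical $W_i$ consisting of the standard-basis columns of the $i$th block and $P_j$ those columns reversed; these are Jordan chains for $J$ and $J^T$ respectively. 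A direct calculation then shows that $W_iE_l^{r_i,r_j}P_j^T$ is supported on the $(i,j)$-block, where it is the elementary matrix with a single diagonal of $1$'s along the $(r_j-l)$-th super/sub-diagonal, for $l=1,\dots,\min(r_i,r_j)$. As $(i,j,l)$ vary, the supports are disjoint across blocks, distinct diagonals are distinct within a block, and the total count $\sum_{i,j}\min(r_i,r_j)$ matches the classical formula for $\dim\cent(A)$ at $\lambda$; hence linear independence is immediate and the span exhausts the centralizer.

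The main obstacle is the bookkeeping in the reduction to Jordan form: one must verify that the reversal built into the Jordan chain for $A^T$ is exactly what is needed for $W_iE_l^{r_i,r_j}P_j^T$ to collapse to a single elementary diagonal matrix in the $(i,j)$-block rather than a messier combination. The product $E_l^{r_i,r_j}E_{r_j}$ (which is what appears in canonical form) has to be identified with a shifted identity occupying a prescribed diagonal of the rectangular block, and this hinges on the $E_j^{a,b}$ notation interacting cleanly with the backwards identity. Once this identification is carried out, the dimension count matches the classical one for centralizers of matrices in Jordan form, and the spanning conclusion follows.
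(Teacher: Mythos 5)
Your proof is correct and fills in the details that the paper delegates to Gantmacher (Chapter VIII of the 1964 text): reducing by similarity to Jordan form, using the disjoint-spectrum Sylvester argument to decouple eigenvalues, and then verifying that the matrices $W_iE_l^{r_i,r_j}P_j^T$ occupy distinct shifted diagonals of the $(i,j)$ block and exhaust the classical count $\sum_{i,j}\min(r_i,r_j)$. The commutation identity $N_aE_j^{a,b}=E_j^{a,b}N_b^T$ and the observation that the reversal built into the $J^T$-chain is exactly what collapses $W_iE_l^{r_i,r_j}P_j^T$ to a single diagonal are the two computations Gantmacher's treatment makes implicit, and you have carried them out correctly.
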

\begin{proof}
This is a simple variant of the description in Chapter \RN{8} of \cite{gantmacher1964theory}. 
\end{proof}

Counting the total number of matrices of the form \eqref{eq:basiscent} we obtain the dimension of the centralizer. (See, for instance, \cite{arnold1971matrices,gantmacher1964theory,gohberg2006invariant,macduffee1933}.)


\begin{corollary}\label{cor:centdimcount}
Let $A$ be a square matrix with the Jordan form $A = \bigoplus_{j=1}^n J_{r_j}^{\lambda_j}$. 
\begin{equation*}
    \dim(\cent(A)) = \sum_{j=1}^n r_j + \sum_{\lambda_j=\lambda_k} \min(2r_j, 2r_k).
\end{equation*}
\end{corollary}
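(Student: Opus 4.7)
My plan is to read the dimension directly off the spanning family produced by Lemma \ref{lem:basiscent} and then reorganize the resulting count. Since $A$ and $A^T$ have identical Jordan structure, their Jordan chain matrices can be indexed by a common set of block labels: to block $j$ (with eigenvalue $\lambda_j$ and size $r_j$) corresponds a chain matrix $W_j$ of $A$ and a chain matrix $P_j$ of $A^T$, both having $r_j$ columns. For each ordered pair $(j,k)$ with $\lambda_j = \lambda_k$, Lemma \ref{lem:basiscent} contributes exactly $\min(r_j, r_k)$ matrices of the form $W_j\, E_m^{r_j, r_k}\, P_k^T$ with $1 \le m \le \min(r_j, r_k)$, while pairs with $\lambda_j \neq \lambda_k$ contribute nothing.

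The next step is to upgrade this spanning set to an honest basis. Since $\cent(A)$ transforms covariantly under similarity, I would reduce to the case where $A$ is already in Jordan canonical form. Then $W_j$ picks out the standard coordinate axes spanning block $j$, while $P_k$ picks out those axes in reverse order (the eigenvector of $(J_{r_k}^{\lambda_k})^T$ is the last standard basis vector of its block). A direct entry-by-entry computation shows that $W_j\, E_m^{r_j, r_k}\, P_k^T$ is supported on a length-$m$ segment of the diagonal $q - p = \mathrm{const}(j,k,m)$ inside the $(j,k)$ block rectangle of an $n \times n$ matrix. Distinct $(j,k)$ give disjoint block rectangles, and for fixed $(j,k)$ distinct $m$ give distinct diagonals; hence no nontrivial linear combination of these matrices can vanish.

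Combining the two steps, $\dim \cent(A) = \sum_{\lambda_j = \lambda_k} \min(r_j, r_k)$ where the sum ranges over ordered pairs. Isolating the diagonal $j = k$ yields $\sum_{j} r_j$, which is the first sum in the statement. Each off-diagonal unordered pair $\{j,k\}$ with $\lambda_j = \lambda_k$ accounts for two ordered pairs and so contributes $2\min(r_j, r_k) = \min(2r_j, 2r_k)$, giving the second sum exactly as stated.

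The main obstacle is the linear-independence check, since Lemma \ref{lem:basiscent} only asserts that the matrices span $\cent(A)$. After conjugating $A$ to Jordan form the verification is essentially bookkeeping, but one must be careful to confirm that the diagonal segments generated by distinct triples $(j, k, m)$ really do sit in pairwise disjoint positions; everything else is a direct computation of a sum.
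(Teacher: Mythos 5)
Your argument is correct and follows the same route as the paper: count the spanning family from Lemma \ref{lem:basiscent} and reorganize the double sum over Jordan-block pairs (ordered pairs giving $\sum_{j,k}\min(r_j,r_k)$, which splits into the diagonal $\sum_j r_j$ plus $2\min(r_j,r_k)=\min(2r_j,2r_k)$ per unordered off-diagonal pair). The paper dispatches linear independence with a citation to the classical references, whereas you supply it explicitly by conjugating to Jordan form and observing that each $W_j E_m^{r_j,r_k}P_k^T$ occupies a distinct diagonal segment inside a distinct block rectangle; that bookkeeping is accurate.
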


\subsection{Basics : solution and cosolution}\label{sec:solandcosol}
\par We start by defining some basic notation. We label the solution sets to the equations $X^*\!J\pm JX=0$ as follows.

\begin{definition}\label{def:solcosol}
For a given square matrix $J$ define four solution sets,
\begin{align*}
    \sol(J) &:= \{X : X^T\!J+JX=0\}, \hspace{.7cm} \cosol(J):=\{X:X^T\!J-JX=0\} \\
    \sol^H(J) &:= \{X : X^H\!J+JX=0\}, \hspace{.7cm} \cosol^H(J):= \{X:X^H\!J-JX=0\}.
\end{align*}
We call $\sol(J)$ the \textit{solution} and $\cosol(J)$ the \textit{cosolution}. 
\end{definition}

\begin{remark}\label{rem:solHreal}
At first glance, one might believe all four of these sets are complex vector spaces (implying, for example, that multiplying by a complex scalar is a closed operation) given that $J$ is complex, since they seem to be the homogeneous solutions of linear equations. But a closer inspection reveals that the ``$H$" spaces are not complex vector spaces since, for example, if $X$ is a solution, $iX$ need not be. They are, however, real vector spaces. The matrix transposition is an analytic map and $\sol(J)$ is a complex vector space just like the Lie group $G_J$ is a complex manifold. On the other hand, the Lie group $G_J^H$ is not a complex group since the conjugate transposition is not analytic. As a consequence, $\sol^H(J)$ and $\cosol^H(J)$ are real (but not complex) vector spaces. For example, $\text{U}(n)$ is a real Lie group and $\mathfrak{u}(n)$ (skew-Hermitian matrices) is a real vector space. $\text{O}(n, \mathbb{C})$ is a complex Lie group and $\mathfrak{o}(n, \mathbb{C})$ (complex skew-symmetric matrices) is a complex vector space.
\end{remark}

\subsection{Nonsingular $J$ and the cosquare}\label{sec:nonsingulartheory}
Assume for a moment that our $J$ is complex nonsingular, because many key intuitions arise when we study nonsingular $J$. Singular $J$ will be discussed later in Section \ref{sec:singularJbackground}. One can also consider real or quaternionic $J$ with a simple modification of what we describe in this section.

\par An important matrix related to $\sol(J)$ and $\cosol(J)$ is the \textit{cosquare}\footnote{The definition of the cosquare of $J$ is sometimes different. In fact, the four matrices $JJ^{-*}$, $J^*J^{-1}$, $J^{-1}J^*$, $J^{-*}J$ are all very much alike. Authors usually select one of them for their own needs. For example in \cite{horn2012matrix} the cosquare is given as $J^{-*}J$ and in \cite{taussky1979some} it is defined as $J^{-1}J^*$.} $J^{-T}\!J$ of $J$. 
\begin{definition}
Given a square nonsingular matrix $A$, the matrix $A^{-T}\!A$ is called the \textit{cosquare} of $A$ and denote it by $\cosq(A)$. If $A$ is a complex or quaternionic matrix define the \textit{$H$-cosquare} of $A$ by $A^{-H}\!A$ and denoted it by $\cosq^H(A)$. 
\end{definition} 

\par Obviously $C = J^{-T}\!J \sim JJ^{-T}$ ($\sim$ stands for matrix similarity) since $AB\sim BA$ for any invertible $A, B$. ($A^{-1}(AB)A = BA$.) Using that any matrix is similar to its transpose (e.g., see \cite[Thm 3.2.3.1]{horn2012matrix}), $JJ^{-T}\sim J^{-1}J^T=C^{-1}$. Lemma \ref{lem:cosqeigenstructure} follows. 
\begin{lemma}\label{lem:cosqeigenstructure}
For a given nonsingular $J$, the cosquare $C$ is similar to its inverse. Thus, $C, C^T, C^{-1}, C^{-T}$ are all similar to each other. Moreover, the $H$-cosquare $D$ is similar to its conjugate inverse, $\bar{D}^{-1}$.
\end{lemma}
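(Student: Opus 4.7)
The plan is essentially to make rigorous the computation already sketched in the paragraph immediately preceding the lemma, and then to repeat the argument carefully for the $H$-cosquare case, where conjugation has to be tracked.

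First I would establish the cosquare statement. Starting from $C = J^{-T}J$, the general fact $AB \sim BA$ (applied with $A = J^{-T}$ and $B = J$, realized explicitly by conjugation: $J^{T}(J^{-T}J)J^{-T} = JJ^{-T}$) gives $C \sim JJ^{-T}$. A direct computation shows
\begin{equation*}
(JJ^{-T})^T \;=\; (J^{-T})^T J^T \;=\; J^{-1}J^T \;=\; (J^{-T}J)^{-1} \;=\; C^{-1}.
\end{equation*}
Invoking the standard fact that every square matrix is similar to its transpose, we obtain $JJ^{-T} \sim (JJ^{-T})^T = C^{-1}$, and therefore $C \sim C^{-1}$. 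Applying transpose-similarity once more yields $C \sim C^T$ and $C^{-1} \sim C^{-T}$, so all four matrices $C, C^T, C^{-1}, C^{-T}$ lie in a single similarity class.

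Next I would run the same argument for $D = J^{-H}J$. By $AB \sim BA$, we have $D \sim JJ^{-H}$. The only subtlety is the transpose of $J^{-H}$: since $J^{-H} = (J^H)^{-1}$, one has $(J^{-H})^T = ((J^H)^T)^{-1} = \bar{J}^{-1}$. Thus
\begin{equation*}
(JJ^{-H})^T \;=\; (J^{-H})^T J^T \;=\; \bar{J}^{-1}J^T \;=\; \overline{J^{-1}J^H} \;=\; \overline{D^{-1}} \;=\; \bar{D}^{-1}.
\end{equation*}
Again using matrix-to-transpose similarity, $JJ^{-H} \sim \bar{D}^{-1}$, so $D \sim \bar{D}^{-1}$, which is the stated conclusion.

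Nothing in this plan is really hard; the only place to be careful is the identity $(J^{-H})^T = \bar{J}^{-1}$, because getting a sign or a bar wrong there would spoil the $H$-case. Everything else is a bookkeeping exercise built out of the two reusable lemmas $AB \sim BA$ and $M \sim M^T$.
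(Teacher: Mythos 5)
Your proposal is correct and takes essentially the same route as the paper: similarity $AB\sim BA$ followed by matrix-to-transpose similarity, applied to $JJ^{-T}$ (resp. $JJ^{-H}$). The paper only sketches the $T$-case and leaves the $H$-case implicit, so your explicit verification that $(J^{-H})^T=\bar J^{-1}$ and $\bar J^{-1}J^T=\overline{D^{-1}}=\bar D^{-1}$ is exactly the bookkeeping the paper omits, and it is carried out correctly.
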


\par From Lemma \ref{lem:cosqeigenstructure} the eigenstructures (sizes and numbers of Jordan blocks) of $C, C^T, C^{-1}, C^{-T}$ are all identical. Let us select four Jordan chain\footnote{We state only the results for the regular transpose but the cases $X^H\!J\pm JX=0$ are similar.} matrices $W, U, P, Q$ of $C, C^{-1}, C^T, C^{-T}$, respectively, all corresponding to the same Jordan block with eigenvalue $\lambda$. The columns of $W$, for example, $w_1, \dots, w_r$ satisfy the relationships $(C-\lambda I)w_j = w_{j-1}$ with $w_0=0$, namely,
\begin{equation*}
    \begin{bmatrix} & & \\ &\hspace{0.2cm} C\hspace{0.2cm}\, & \\ & &\end{bmatrix}
    \begin{bmatrix} \vert & &\vert \\ w_1 & \cdots & w_r \\ \vert & & \vert \end{bmatrix} = \begin{bmatrix} \vert & &\vert \\ w_1 & \cdots & w_r \\ \vert & & \vert \end{bmatrix}
    \begin{bmatrix}\lambda & 1 & 0 \\  &\ddots & 1\\ & & \lambda\end{bmatrix}.
\end{equation*}
By definition, the four Jordan chains satisfy (with the Jordan block $J^\lambda$)
\begin{equation}\label{eq:WUPQdefinition}
CW = WJ^\lambda,\hspace{0.5cm}C^{-1}U = UJ^\lambda,\hspace{0.5cm}C^TP = PJ^\lambda, \hspace{0.5cm} C^{-T}Q = QJ^\lambda.
\end{equation}
\par Interestingly, $J^{-T}\!JW = W J^\lambda$ is equivalent to $JJ^{-T}\big(JW\big) = \big(JW\big)J^\lambda$, which makes the matrix $JW$ eligible as a choice of $Q$. Moreover, $J^{-T}\!JW = WJ^\lambda$ is equivalent to $JJ^{-T}\big(J^TW\big) = \big(J^TW\big)J^\lambda$, which makes $J^TW$ eligible as a choice of $Q$. We deduce the following relationships. (Denote the set of all possible choices of a Jordan chain $W$ by $\{W\}$, and similarly the other chains.)
\begin{equation}\label{eq:UWPQrelationship}
    \begin{array}{cc}
         JW, J^TW \in \{Q\}, & \hspace{1cm}JU, J^TU \in \{P\}, \\
         J^{-1}P, J^{-T}P \in \{U\}, &  \hspace{1cm}J^{-1}Q, J^{-T}Q \in\{W\}.
    \end{array}
\end{equation}

Then, $\sol(J)$, $\cosol(J)$ and $\cent(\cosq(J))$ have the following important property. 
\begin{theorem}\label{thm:solandcosol}
Let $Z\in\cent(\cosq(J))$. Then the following holds:
\begin{gather}\label{eq:sol}
    Z - J^{-1}Z^TJ = Z-J^{-T}Z^TJ^T\in \sol(J),\\
    Z + J^{-1}Z^TJ = Z+J^{-T}Z^TJ^T\in \cosol(J). \label{eq:cosol}
\end{gather}
Similarly, we have $Z-J^{-1}Z^HJ= Z-J^{-H}Z^HJ^H \in \sol^H(J)$ and $Z+J^{-1}Z^HJ=Z+J^{-H}Z^HJ^H \in \cosol^H(J)$ for $Z\in\cent(\cosq^H(J))$. 
\end{theorem}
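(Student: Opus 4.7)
The plan is to verify both identities by direct substitution, after first establishing the equality between the two expressions on the left of each equation. The only nontrivial input is the commutativity $Z\cosq(J) = \cosq(J)Z$, and both implications fall out of a short manipulation.

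First I would rewrite the hypothesis $Z(J^{-T}J) = (J^{-T}J)Z$ by left-multiplying by $J^T$ and right-multiplying by $J^{-1}$, obtaining
\begin{equation*}
    J^T Z J^{-T} \;=\; J Z J^{-1}.
\end{equation*}
Taking the transpose of both sides yields $J^{-1}Z^T J = J^{-T} Z^T J^T$, which is exactly the equality of the two expressions appearing in \eqref{eq:sol} and in \eqref{eq:cosol}. So it suffices to work with the form $Z \mp J^{-1}Z^T J$.

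Next, for \eqref{eq:sol}, I would set $X := Z - J^{-1}Z^T J$ and compute
\begin{equation*}
    X^T J + J X \;=\; \bigl(Z^T - J^T Z J^{-T}\bigr) J + J\bigl(Z - J^{-1}Z^T J\bigr) \;=\; J Z - J^T Z J^{-T} J,
\end{equation*}
since the $Z^T J$ terms cancel. The right-hand side equals $JZ - J^T Z \cosq(J)$. Applying $Z\cosq(J) = \cosq(J) Z$ and then $J^T \cosq(J) = J^T J^{-T} J = J$ gives $J^T Z \cosq(J) = J^T \cosq(J) Z = J Z$, so $X^T J + J X = 0$, proving $X \in \sol(J)$. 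The cosolution identity \eqref{eq:cosol} is obtained by the same computation with a sign flip on the cross-term: setting $Y := Z + J^{-1}Z^T J$, the quantity $Y^T J - J Y$ reduces to $J^T Z \cosq(J) - J Z$, which vanishes by the same commutativity argument.

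Finally, the $H$-cosquare statements are proved by repeating the same two steps verbatim with $T$ replaced by $H$. The preliminary identity becomes $J^{-1} Z^H J = J^{-H} Z^H J^H$, derived from $Z(J^{-H}J) = (J^{-H}J)Z$ by conjugate-transposing in place of transposing. The membership verifications for $\sol^H(J)$ and $\cosol^H(J)$ are then formally identical, with $J^H \cosq^H(J) = J$ playing the role previously played by $J^T \cosq(J) = J$. There is no genuine obstacle here; the only point requiring a moment's care is noting that the two left-hand expressions in each equation actually agree, which is why I would isolate that step at the very beginning.
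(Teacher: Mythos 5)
Your proof is correct and follows essentially the same approach as the paper: a direct algebraic verification of $X^T\!J + JX = 0$ using the commutativity $Z\cosq(J) = \cosq(J)Z$. The one small addition you make—explicitly deriving $J^{-1}Z^TJ = J^{-T}Z^TJ^T$ from the centralizer condition before starting—is a worthwhile clarification, since the paper simply asserts the equality of the two forms and only verifies membership in $\sol(J)$ for one of them, leaving the rest to "Similarly one can obtain all other results."
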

\begin{proof}
One needs only a simple algebraic manipulation to see this. Since $Z\in\cent(\cosq(J))$, we have $ZJ^{-T}J = J^{-T}JZ$. Then, for $X := Z-J^{-1}Z^TJ$, 
\begin{align*}
    X^T\!J+JX = Z^TJ - &J^TZJ^{-T}J + JZ - Z^TJ \\
    &=  -J^TJ^{-T}JZ + JZ = -JZ + JZ = 0.
\end{align*}
Similarly one can obtain all other results.
\end{proof}

\par Any $X$ in $\sol(J)\cap\cosol(J)$ satisfies $X^T\!J=JX=0$ and the invertibility of $J$ implies $X=0$. Since the intersection is trivial, one can compute for any $Z \in \cent(\cosq(J))$ unique $X\in\sol(J)$, $Y\in\cosol(J)$ such that $Z = X + Y$.
\begin{corollary}\label{cor:solcosoldirectsum}
For a nonsingular $J$ we have
\begin{equation*}
    \sol(J) \oplus \cosol(J) = \cent(\cosq(J)). 
\end{equation*}
For the conjugate transpose we have $\sol^H(J) \oplus \cosol^H(J) = \cent(\cosq^H(J))$.
\end{corollary}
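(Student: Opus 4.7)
The plan is to split the statement into two halves and verify each in turn: first that the sum is direct (intersection trivial), and second that the direct sum coincides with the entire centralizer.

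For directness, I would verify that $\sol(J) \cap \cosol(J) = \{0\}$. If $X$ lies in both, then $X^T\!J + JX = 0$ and $X^T\!J - JX = 0$ both hold; adding and subtracting these yields $X^T\!J = 0$ and $JX = 0$, and the invertibility of $J$ forces $X = 0$. The same argument works verbatim in the $H$-case.

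Next, for the inclusion $\sol(J) + \cosol(J) \subseteq \cent(\cosq(J))$, I would take $X \in \sol(J)$, start from $X^T\!J = -JX$, transpose to obtain $X = -J^{-T}X^T J^T$, and rewrite this as $X J^{-T} = -J^{-T}X^T$. Multiplying on the right by $J$ gives
\begin{equation*}
X J^{-T} J \;=\; -J^{-T} X^T J \;=\; -J^{-T}(-JX) \;=\; J^{-T} J \cdot X,
\end{equation*}
so $X$ commutes with $\cosq(J)$. The same chain of manipulations with the opposite sign handles $\cosol(J)$. For the reverse inclusion, Theorem \ref{thm:solandcosol} provides exactly what is needed: given any $Z \in \cent(\cosq(J))$, split it as
\begin{equation*}
Z \;=\; \tfrac{1}{2}\bigl(Z - J^{-1}Z^T J\bigr) \,+\, \tfrac{1}{2}\bigl(Z + J^{-1}Z^T J\bigr),
\end{equation*}
and the theorem places the first summand in $\sol(J)$ and the second in $\cosol(J)$. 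Combined with directness, this finishes the transpose case, and the $H$-version follows by replacing $T$ with $H$ throughout.

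The step that most deserves care, rather than being technically difficult, is an interpretive one flagged in Remark \ref{rem:solHreal}: in the $H$-case the summands $\sol^H(J)$ and $\cosol^H(J)$ are only real subspaces of the complex vector space $\cent(\cosq^H(J))$, so the direct sum must be read over $\mathbb{R}$, not over $\mathbb{C}$. The splitting $Z = \tfrac{1}{2}(Z - J^{-1}Z^H J) + \tfrac{1}{2}(Z + J^{-1}Z^H J)$ is real-linear and therefore still does the job, but the scalar field convention is the single point where a reader could otherwise be misled. A quick sanity check with $J = I_n$ (Hermitian $\oplus$ skew-Hermitian exhausts $M_n(\mathbb{C})$ over $\mathbb{R}$) confirms that nothing is lost.
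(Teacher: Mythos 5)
Your proof is correct and follows the same basic strategy the paper uses: trivial intersection plus the splitting $Z = \tfrac12(Z - J^{-1}Z^T J) + \tfrac12(Z + J^{-1}Z^T J)$ from Theorem \ref{thm:solandcosol}. You go one step further than the paper, though, and this is a genuine improvement in rigor: the paper only establishes the inclusion $\cent(\cosq(J)) \subseteq \sol(J) + \cosol(J)$ (via the splitting) and the triviality of the intersection, but never explicitly checks the reverse containment $\sol(J), \cosol(J) \subseteq \cent(\cosq(J))$. Your computation starting from $X^T J = -JX$, transposing, and deriving $X J^{-T}J = J^{-T}J X$ supplies exactly that missing piece. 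Your flag on the scalar-field issue in the $H$-case also lines up with Remark \ref{rem:solHreal} and is a sensible point to make explicit.
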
 
The two maps $Z\mapsto (Z-J^{-1}Z^TJ)/2$ and $Z \mapsto (Z+J^{-1}Z^TJ)/2$ serve as projections of $\cent(\cosq(J))$ down to $\sol(J)$ and $\cosol(J)$. Thinking of $J^{-1}Z^TJ$ as a special transpose, one can treat $\sol(J)$ and $\cosol(J)$ as analogs of skew-symmetric and symmetric matrices, respectively. (In fact, for $J=I$ this is exactly the case.) 

\subsection{The automorphism group when $J$ is a small matrix}

\subsubsection{Generic $4\times 4$ real $J$}\label{sec:4x4generic}

\par It is certainly helpful to work out some small cases to get a grasp on the structures of the automorphism groups. For small sized matrices, Lemma \ref{lem:cosqeigenstructure} can be directly applied to determine the eigenvalue characteristic of the cosquare. Figure \ref{fig:4x4plots} in the introduction contains one such example. The four generic eigenvalue profiles (in red) in Figure \ref{fig:4x4plots} are the four possible scenarios by applying Lemma \ref{lem:cosqeigenstructure} to the real $4\times 4$ cosquare $J^{-T}\!J$. 

\par If there are no double eigenvalues as in these cases, we have a simpler situation. Using Theorem 2.1.(d) of \cite{horn2007canonical} it follows that a given generic $J\in\mathbb{R}^{4\times 4}$ is congruent to a block combination of $2\times 2$ type (\rn{2}) matrices, $4\times 4$ type (\rn{2}\pp) matrices, and $2\times 2$ type (\rn{3}\pp) matrices in the Theorem. Type (\rn{2}) matrices have their cosquares with two real eigenvalues $(\lambda, 1/\lambda)$, type (\rn{2}\pp) matrices' cosquares contain four eigenvalues $(\lambda, \bar\lambda, 1/\lambda, 1/\bar\lambda), \lambda\in\mathbb{C}\backslash\mathbb{R}$, and type (\rn{3}\pp) matrices have their cosquares with pairs of unit eigenvalues $(\alpha, \bar\alpha), |\alpha|=1$. For example, a matrix $J$ with the top left eigenvalue profile of Figure \ref{fig:4x4plots} is congruent to a combination of two (\rn{3}\pp) blocks with distinct unit complex eigenvalues. 

\par Then by computing the block solutions (since there are no interactions between blocks) as in \cite{de2011trans} but with real matrices, one realizes that the solution sets have log-level eigenvalue profiles. Namely, for type (\rn{2}) $J$ block with the cosquare $X\begin{bsmallmatrix}\lambda & \\ & 1/\lambda\end{bsmallmatrix}X^{-1}$, we have the solution set $\{X\begin{bsmallmatrix}t & \\ & -t\end{bsmallmatrix}X^{-1}: t\in\mathbb{R}\}$. Similarly for a type (\rn{2}\pp) $J$ block we have $\sol(J)$ similar to the collection of $\text{diag}(t, -t, \bar{t}, -\bar{t}), t\in\mathbb{C}\backslash\mathbb{R}$, and for a type (\rn{3}\pp) $J$ block we have $\sol(J)$ similar to all $\text{diag}(it, -it), t\in\mathbb{R}$. 

\par Then we use the exponential map (see Section \ref{sec:exp}) to obtain $G_J^\mathbb{R}$. The eigenvalue profiles of the matrices in $G_J^\mathbb{R}$ become the eigenvalue profile of $\cosq(J)$. This could already be seen from the fact that $\cosq(J)=J^{-T}\!J$ itself belongs to $G_J^\mathbb{R}$. For instance, in the top left case of Figure \ref{fig:4x4plots} we have a $J$ such that the eigenvalues of $\cosq(J)$ are $(\alpha_1, \bar\alpha_1, \alpha_2, \bar\alpha_2)$ with $\alpha_1, \alpha_2$ both in the unit complex circle. The group $G_J^\mathbb{R}$ then is similar to the collection of all $(\beta, \bar\beta, \gamma, \bar\gamma)$ where $\beta, \gamma$ are on the complex unit circle. Thus, $G_J^\mathbb{R}$ is diffeomorphic to $S^1\times S^1$.

\par Figure \ref{fig:4x4plots} is the visualizations of the identity components of $G_J^\mathbb{R}$ for the four possible cases: circle $\times$ circle (top left), $\mathbb{C}\backslash\{0\}$ (top right), hyperbola $\times$ circle (bottom left) and hyperbola $\times$ hyperbola (bottom right). They are also the eigenstructure of the cosquare in each case. Furthermore, since the set of real eigenpairs $(\lambda, 1/\lambda)$ are disconnected for positive and negative $\lambda$'s, there are two such components for the bottom left case. Similarly for the bottom right case the group $G_J^\mathbb{R}$ contains four isomorphic copies of the identity component.

\subsubsection{$2\times 2$ real $J$}\label{sec:2x2structure}
We consider $J\in\mathbb{R}^{2\times 2}$. For a given $J$, let $J=S+A$ be the decomposition of $J$ into its symmetric part $S$ and skew-symmetric part $A$. It is useful that the signature of $S$ and the rank of $A$ are both invariant under congruence transformations on $J$. Carefully classifying the group structure for all real $2\times 2$ cases we obtain the following Table \ref{tab:2x2cases}.  

\begin{table}[h]
\begin{center}
{\tabulinesep=0.5mm
\small
\centering
\title{Groups $G_J^\mathbb{R} = \{G:G^T\!JG=J\}$ up to similarity transformation, $J\in\mathbb{R}^{2\times 2}$}
\begin{tabu}{|c|c|c|c|c|c|}
\hline
 & \makecell{$J$ up to \\congruence} & \makecell{Eigenvalues\\of $\cosq(J)$} & \makecell{Signature$(S)$, \\ $\text{rank}(A)$} & \makecell{Group structure\\  (up to conjugation)} & Dim\\ \hline
\circled{1} & \makecell{Generic \\ $\text{Sign}(S)\!=\!(1,1)$} & \makecell{$\lambda, \frac{1}{\lambda}$ \\ $\lambda\in\mathbb{R}$} & $(1, 1), 2$ & \makecell{$\Big\{\begin{bsmallmatrix}x & 0 \\ 0 & \frac{1}{x}\end{bsmallmatrix}: x\in\mathbb{R}\backslash\{0\}\Big\}$ \\ (Hyperbola)} & 1\\ \hline
\circled{2} & \makecell{Generic \\ $\text{Sign}(S)\!=\!(2,0)$} & \makecell{$\lambda, \bar\lambda$ \\ $|\lambda|=1$} & $(2, 0),2$ & \makecell{$\{\begin{bsmallmatrix}c & s \\ \mis s & c\end{bsmallmatrix}: c^2+s^2=1\}$ \\ (Circle)} & 1 \\ \hline
\circled{3} & $I_2$ & $J_1^1\oplus J_1^1$ & $(2, 0), 0$ & \makecell{$\text{O}(2)$ \\ (Two circles)} & 1\\ \hline
\circled{4} & $I_{1,1}=$\scalebox{0.8}[0.8]{$\begin{bmatrix}1 & 0 \\ 0 & \mi 1\end{bmatrix}$} & $J_1^1\oplus J_1^1$ & $(1, 1),0$ & \makecell{$\text{O}(1,1)$\\ (Two hyperbolae)} & 1 \\ \hline
\circled{5} & \scalebox{0.8}[0.8]{$\begin{bmatrix}\,\,0 & 1 \\ \,\,\mi 1 & 0\end{bmatrix}$} & $J_1^{-1}\oplus J_1^{-1}$ & $(0, 0), 2$ & $\text{SL}(2, \mathbb{R}) = \text{Sp}(2, \mathbb{R})$ & 3 \\ \hline
\circled{6} & \scalebox{0.8}[0.8]{$\begin{bmatrix}0 & \mi 1 \\ 1 & 1\end{bmatrix}$} & $J_2^{-1}$ & $(1, 0), 2$ & \makecell{$\{\pm \begin{bsmallmatrix}1 & x \\ 0 & 1\end{bsmallmatrix}: x\in\mathbb{R}\}$ \\ (Two Real lines)} & 1\\ \hline
\circled{7} & \scalebox{0.8}[0.8]{$\begin{bmatrix} 0 & 1 \\ 0 & 0 \end{bmatrix}$} & - & $(1, 1), 2$ & \makecell{$\Big\{\begin{bsmallmatrix}x & 0 \\ 0 & \frac{1}{x}\end{bsmallmatrix}: x\in\mathbb{R}\backslash\{0\}\Big\}$ \\ (Hyperbola)} & 1\\ \hline
\circled{8} & \scalebox{0.8}[0.8]{$\begin{bmatrix} 1 & 0 \\ 0 & 0 \end{bmatrix}$} & - & $(1, 0), 0$ & \makecell{$\{\begin{bsmallmatrix}\pm 1 & 0 \\ x & y\end{bsmallmatrix}: x,y\in\mathbb{R},y\neq 0\}$ \\ $\{-1, 1\}\times \text{Aff}(1, \mathbb{R})$}& 2\\ \hline
\circled{9} & Zero matrix & - & $(0, 0),0$ & $\text{GL}(2, \mathbb{R})$ & 4\\ \hline
\end{tabu}}
\end{center}
\caption{Nine types of the real group $G_J^\mathbb{R}$ for a $2\times 2$ real $J$ (and their dimensions). Note that the signature $(a, b)$ represents both $(a, b)$ and $(b, a)$.}
\label{tab:2x2cases}
\end{table}

The nine cases also have the closure graph (sometimes called bundle stratification or closure hierarchy) as illustrated in Figure \ref{fig:closure}. In short, a closure graph is a Hasse diagram with a partial order $\preceq $, where $x\preceq y$ if $x$ is contained in the closure of $y$. See \cite{dmytryshyn2015change,futorny2014change} for more on the closure graphs of matrix groups. After working out the real case for $2\times 2$ and $3\times 3$ matrices (for $3\times 3$ case, see \cite{sungwoo2021}), we were happy to find that the slightly simpler complex cases can already be found in \cite{dmytryshyn2015change}. Figure \ref{fig:closure} is analogous to the relationships (e.g., Figures 1 and 2) in \cite{dmytryshyn2015change} since congruent $J$'s deduce similar automorphism groups. 

\begin{figure}[ht]
    \centering
    \includegraphics[width=4.9in]{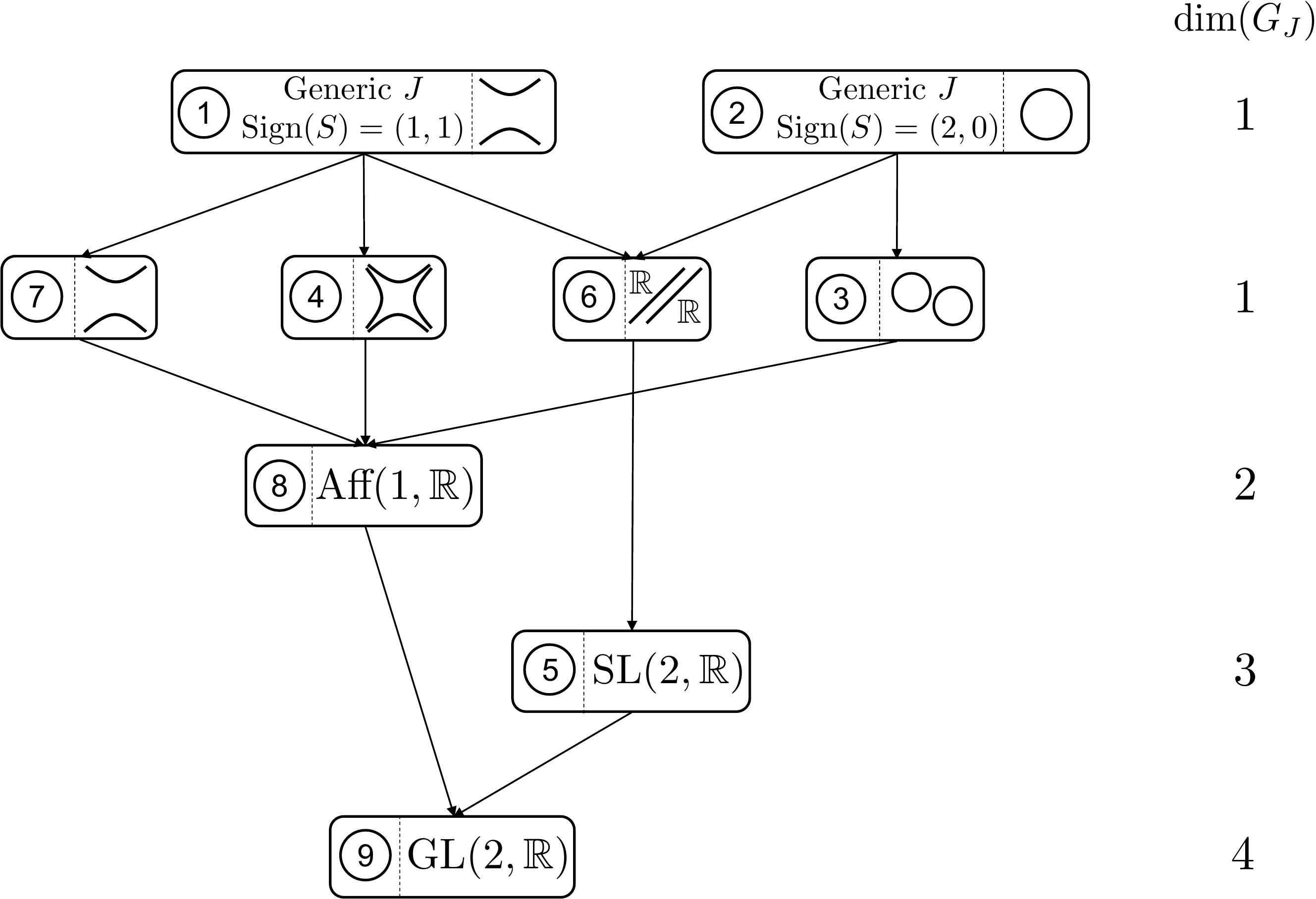}
    \caption{The closure relationship of real $2\times 2$ matrices classified by the structure of $G_J^\mathbb{R}$. Details of each cell can be found in Table \ref{tab:2x2cases}. Each cell contains a simple illustration of the group structure such as circle, hyperbola and more. \textcircled{ \small{$\!\!$A}}$\to$\textcircled{\small B} means that ``the closure of A contains B".}
    \label{fig:closure}
\end{figure}

\subsection{Main results}
By Theorem \ref{thm:solandcosol}, it is possible to construct a basis and compute the dimension of $\sol(J)$. Begin with a basis of $\cent(\cosq(J))$ and use the maps \eqref{eq:sol}, \eqref{eq:cosol} to obtain the sets that contain bases of $\sol(J)$ and $\cosol(J)$. More explicitly, if we select a basis $\cup \{W E_k^{a,b} P^T\}$ of $\cent(C)$ and apply \eqref{eq:sol} with a substitution of $J^{-1}P=U$, we obtain $\cup\{ WE_k^{a,b}U^TJ^T - UE_k^{b,a}W^TJ \}$. This is the projection of a basis of $\cent(\cosq(J))$ onto $\sol(J)$. In Sections \ref{sec:complexTdimcount} to \ref{sec:realdimcount} we will rule out linearly dependent matrices from the projections to determine precise bases of $\sol(J)$ and $\cosol(J)$. 

\begin{definition}\label{def:XTYT}
For matrices $J\in\mathbb{C}^{n\times n}$, $W\in\mathbb{C}^{n\times a}$, $U\in\mathbb{C}^{n\times b}$ define four matrices $X_T$, $Y_T$, $X_H$, $Y_H$ as follows: ($k=1,\dots, \min(a, b)$)
\begin{gather}\label{eq:complexTXT}
    X_T(k, J, W, U) = WE_k^{a, b}U^TJ^{T} - U E_k^{b, a}W^TJ, \\
    Y_T(k, J, W, U) = WE_k^{a, b}U^TJ^{T} + U E_k^{b, a}W^TJ, \\
    X_H(k, J, W, U) = WE_k^{a, b}U^HJ^{H} - U E_k^{b, a}W^HJ, \\
    Y_H(k, J, W, U) = WE_k^{a, b}U^HJ^{H} + U E_k^{b, a}W^HJ.
\end{gather}
\end{definition}
\begin{theorem}[Computing $\sol(J)$, $\cosol(J)$ for a complex $J$]\label{thm:complexTsolcosol}
Given a nonsingular $J\in\mathbb{C}^{n\times n}$, let $\Lambda(C)$ be the set of eigenvalues of $C=\cosq(J)$. Let $r_1, \dots, r_m$ be the sizes of the Jordan blocks of $C$ corresponding to $\lambda\in\Lambda(C)$. For $j=1, \dots, m$, select the Jordan chain matrices $W_\lambda^{(j)}, U_\lambda^{(j)}\in\mathbb{C}^{n\times r_j}$ of $C,C^{-1}$ respectively, corresponding to the eigenvalue $\lambda$. Then, the sets 
\begin{gather}\label{eq:complexTsol}
    B_T^+(J) = \bigcup_{\lambda\in\Lambda(C)}\bigg(\bigcup_{1\le s, t\le m} \bigcup_{k=1}^{\min(r_s, r_t)}\Big\{X_T(k, J, W_\lambda^{(s)}, U_\lambda^{(t)})\Big\}\bigg), \\
    B_T^-(J) = \bigcup_{\lambda\in\Lambda(C)}\bigg(\bigcup_{1\le s, t\le m} \bigcup_{k=1}^{\min(r_s, r_t)}\Big\{Y_T(k, J, W_\lambda^{(s)}, U_\lambda^{(t)})\Big\}\bigg),\label{eq:complexTcosol}
\end{gather} 
span the complex vector spaces $\sol(J)$ and $\cosol(J)$, respectively. 
\end{theorem}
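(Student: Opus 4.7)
The plan is to obtain the claimed spanning sets of $\sol(J)$ and $\cosol(J)$ by projecting a suitable spanning set of $\cent(C)$ (supplied by Lemma \ref{lem:basiscent}) through the two maps from Theorem \ref{thm:solandcosol}. First I would observe that, thanks to Corollary \ref{cor:solcosoldirectsum}, the linear maps $\pi_\pm:\cent(C)\to\cent(C)$ defined by $\pi_\pm(Z)=\tfrac{1}{2}(Z\mp J^{-1}Z^TJ)$ are the projections of $\cent(C)=\sol(J)\oplus\cosol(J)$ onto the two summands: an element $X\in\sol(J)$ satisfies $J^{-1}X^TJ=-X$, so $\pi_+(X)=X$ and $\pi_-(X)=0$, and the analogous identities hold on $\cosol(J)$. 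In particular each $\pi_\pm$ is surjective onto the corresponding summand, so the image of any spanning set of $\cent(C)$ under $\pi_+$ (resp.\ $\pi_-$) spans $\sol(J)$ (resp.\ $\cosol(J)$).

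Next I would construct an explicit spanning set of $\cent(C)$ adapted to $J$. Lemma \ref{lem:basiscent} provides such a set in the form $WE_k^{a,b}P^T$, where $W$ ranges over Jordan chain matrices of $C$ and $P$ over those of $C^T$ for a common eigenvalue $\lambda$. The key move is to reparametrize the $P$'s using \eqref{eq:UWPQrelationship}: the map $U\mapsto JU$ is a bijection between Jordan chain matrices of $C^{-1}$ and $C^T$ at eigenvalue $\lambda$, and it preserves block sizes since $C,C^T,C^{-1}$ share the same Jordan structure by Lemma \ref{lem:cosqeigenstructure}. Substituting $P=JU$ turns Lemma \ref{lem:basiscent}'s spanning set into the family $\{W_\lambda^{(s)}E_k^{r_s,r_t}(U_\lambda^{(t)})^TJ^T\}$ as $\lambda$ runs through $\Lambda(C)$, $1\le s,t\le m$, and $1\le k\le\min(r_s,r_t)$.

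A direct calculation then finishes the argument. For $Z=WE_k^{a,b}U^TJ^T$ with $W\in\mathbb{C}^{n\times a}$ and $U\in\mathbb{C}^{n\times b}$, one has $Z^T=JUE_k^{b,a}W^T$ and therefore $J^{-1}Z^TJ=UE_k^{b,a}W^TJ$, giving
\begin{align*}
2\pi_+(Z) &= Z-J^{-1}Z^TJ = WE_k^{a,b}U^TJ^T - UE_k^{b,a}W^TJ = X_T(k,J,W,U),\\
2\pi_-(Z) &= Z+J^{-1}Z^TJ = WE_k^{a,b}U^TJ^T + UE_k^{b,a}W^TJ = Y_T(k,J,W,U).
\end{align*}
Combined with the first two paragraphs, $B_T^+(J)$ and $B_T^-(J)$ are, up to the irrelevant factor of $2$, the images of a spanning set of $\cent(C)$ under $\pi_+$ and $\pi_-$, hence they span $\sol(J)$ and $\cosol(J)$ respectively.

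The main obstacle is purely bookkeeping for the Jordan chains: one must verify that the correspondence $U\leftrightarrow JU$ really is block-size preserving and exhausts the Jordan chain matrices of $C^T$, so that letting $W_\lambda^{(s)}$ and $U_\lambda^{(t)}$ range independently over the chosen chains of $C$ and $C^{-1}$ at $\lambda$ faithfully reproduces the full spanning set of Lemma \ref{lem:basiscent}. This is a direct consequence of \eqref{eq:WUPQdefinition}--\eqref{eq:UWPQrelationship} together with Lemma \ref{lem:cosqeigenstructure}, and once it is in hand the rest of the argument is straightforward linear algebra.
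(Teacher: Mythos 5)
Your proposal is correct and takes essentially the same approach as the paper: you obtain a spanning set of $\cent(C)$ from Lemma \ref{lem:basiscent}, reparametrize via the substitution $P=JU$ from \eqref{eq:UWPQrelationship}, and push it through the projections of Corollary \ref{cor:solcosoldirectsum} onto $\sol(J)$ and $\cosol(J)$. The explicit computation of $\pi_\pm(Z)$ and the check that $U\mapsto JU$ is block-size-preserving are details the paper leaves implicit, but they are exactly the right points to verify.
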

\begin{proof}
From Lemma \ref{lem:basiscent} and \eqref{eq:UWPQrelationship} we have a basis of $\cent(C)$, 
\begin{equation*}
    \bigcup_{\lambda\in\Lambda(C)}\bigg(\bigcup_{1\le s, t\le m} \bigcup_{k=1}^{\min(r_s, r_t)}\Big\{W_\lambda^{(s)}E_k^{r_s, r_t}{U_\lambda^{(t)}}^TJ^T\Big\}\bigg).
\end{equation*}
Since $\sol(J)\oplus\cosol(J) = \cent(C)$ we apply the projection map $\cent(C)\to\sol(J)$ in Theorem \ref{thm:solandcosol} to the above basis and obtain the set 
\begin{equation*}
    \bigcup_{\lambda\in\Lambda(C)}\bigg(\bigcup_{1\le s, t\le m} \bigcup_{k=1}^{\min(r_s, r_t)}\Big\{W_\lambda^{(s)}E_k^{r_s, r_t}{U_\lambda^{(t)}}^TJ^T - U_\lambda^{(t)}E_k^{r_t, r_s}W_\lambda^{(s)}J\Big\}\bigg),
\end{equation*}
which is exactly $B_T^+(J)$. Note that $B_T^+(J)$ spans $\sol(J)$ since it is a projected basis of $\cent(C)$. (Nonetheless, $B_T^+(J)$ may not be a linearly independent.) The set $B_T^-(J)$ could also be obtained and it spans $\cosol(J)$.
\end{proof}

\par For the equation $X^H\!J+JX=0$, the solution set $\sol^H(J)$ is similarly obtained. However as discussed in Remark \ref{rem:solHreal}, the solution and the cosolution sets are real vector spaces. Then $\sol^H(J)$ and $\cosol^H(J)$ are spanned by the following matrices.  

\begin{theorem}[Computing $\sol^H(J), \cosol^H(J)$ for a complex $J$]\label{thm:complexHsolcosol}
Given a nonsingular $J\in\mathbb{C}^{n\times n}$, let $\Lambda(C)$ be the set of eigenvalues of $C=\cosq^H(J)$. Let $r_1, \dots, r_m$ be the sizes of the Jordan blocks of $C$ corresponding to $\lambda\in\Lambda(C)$. For $j=1, \dots, m$, select Jordan chain matrices $W_\lambda^{(j)}, U_{\bar\lambda}^{(j)}\in\mathbb{C}^{n\times r_j}$ of $C, C^{-1}$, corresponding to the eigenvalues $\lambda$ and $\bar\lambda$, respectively. Then, the sets
\begin{gather}\label{eq:complexHsol}
    B_H^+(J) = \bigcup_{\lambda\in\Lambda(C)}\bigcup_{1\le s, t\le m} \hspace{-0.2cm}\bigcup_{k=1}^{\min(r_s, r_t)} \hspace{-0.2cm} \Big\{X_H(k, J, W_\lambda^{(s)}, U_{\bar\lambda}^{(t)}), i Y_H(k, J, W_\lambda^{(s)}, U_{\bar\lambda}^{(t)}) \Big\}, \\
    B_H^-(J) = \bigcup_{\lambda\in\Lambda(C)}\bigcup_{1\le s, t\le m} \hspace{-0.2cm}\bigcup_{k=1}^{\min(r_s, r_t)} \hspace{-0.2cm}\Big\{iX_H(k, J, W_\lambda^{(s)}, U_{\bar\lambda}^{(t)}), Y_H(k, J, W_\lambda^{(s)}, U_{\bar\lambda}^{(t)}) \Big\},\label{eq:complexHcosol}
\end{gather}
span the real vector spaces $\sol^H(J)$ and $\cosol^H(J)$, respectively.
\end{theorem}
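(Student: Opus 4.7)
The plan is to mimic the proof of Theorem~\ref{thm:complexTsolcosol} in the conjugate-transpose setting, with two modifications dictated by the $H$-case. By Lemma~\ref{lem:cosqeigenstructure} the $H$-cosquare $C = \cosq^H(J)$ satisfies $C \sim \bar C^{-1}$, so Jordan chains of $C^{-1}$ are naturally indexed by the conjugate eigenvalue $\bar\lambda$ rather than by $\lambda$; and, as emphasized in Remark~\ref{rem:solHreal}, $\sol^H(J)$ and $\cosol^H(J)$ are only \emph{real} subspaces of the complex space $\cent(C)$, so the decomposition in Corollary~\ref{cor:solcosoldirectsum} must be interpreted over $\mathbb{R}$.

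First I would derive the $H$-analog of the chain relations~\eqref{eq:UWPQrelationship}. Starting from $C^T P = P J^\lambda$ with $P$ a Jordan chain of $C^T$, conjugating both sides and using $\overline{C^T} = J^H J^{-1}$ gives $C^{-1}(J^{-1}\bar P) = (J^{-1}\bar P) J^{\bar\lambda}$. Hence $U := J^{-1}\bar P$ is a valid Jordan chain of $C^{-1}$ for eigenvalue $\bar\lambda$, which rearranges to the substitution $P^T = U^H J^H$. Inserting this into the centralizer basis $\{W_\lambda^{(s)} E_k^{r_s,r_t} (P_\lambda^{(t)})^T\}$ from Lemma~\ref{lem:basiscent} produces a complex basis of $\cent(C)$ whose elements have the form $W_\lambda^{(s)} E_k^{r_s,r_t} (U_{\bar\lambda}^{(t)})^H J^H$. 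A direct calculation using $(W E_k^{a,b} U^H J^H)^H = J U E_k^{b,a} W^H$ then shows that the projections $Z \mapsto Z \mp J^{-1} Z^H J$ from Theorem~\ref{thm:solandcosol} send each such basis element to $X_H$ and $Y_H$ of Definition~\ref{def:XTYT} respectively.

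The main---if mild---obstacle is the real-versus-complex bookkeeping, which is where the $H$ statement genuinely departs from its $T$ counterpart. Any complex basis $\{Z_\alpha\}$ of $\cent(C)$ doubles as a real basis $\{Z_\alpha\} \cup \{i Z_\alpha\}$, and the projection $Z \mapsto \tfrac12(Z - J^{-1} Z^H J)$ is only real linear: applied to $Z_\alpha$ it returns $\tfrac12 X_H$, but applied to $iZ_\alpha$ it returns $\tfrac{i}{2}(Z_\alpha + J^{-1} Z_\alpha^H J) = \tfrac{i}{2} Y_H$, because taking the conjugate transpose swaps the two projectors. This accounts exactly for the pair of generators $\{X_H, i Y_H\}$ appearing in each summand of $B_H^+(J)$, and the symmetric calculation produces $B_H^-(J)$. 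Since a surjective real-linear map carries a real spanning set to a real spanning set, $B_H^+(J)$ and $B_H^-(J)$ respectively span $\sol^H(J)$ and $\cosol^H(J)$ as real vector spaces. The step requiring genuine care is the observation that $i \cdot \sol^H(J) = \cosol^H(J)$; without it one might be tempted to include only the $X_H$ (or only the $Y_H$) family and thereby miss half of each spanning set.
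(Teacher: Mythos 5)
Your proposal is correct and follows essentially the same route as the paper's (very terse) proof: take a real basis of $\cent(C)$ by adjoining $iZ_\alpha$ to each complex basis element $Z_\alpha$, then apply the real-linear projections of Theorem~\ref{thm:solandcosol}, observing that the projection of $Z_\alpha$ yields $X_H$ (resp.\ $Y_H$) while the projection of $iZ_\alpha$ yields $iY_H$ (resp.\ $iX_H$) because conjugation flips the sign of $i$. Your explicit derivation of the $\bar\lambda$-indexing via $\overline{C^T}\bar P = \bar P J^{\bar\lambda}$, and the substitution $P^T = U^H J^H$, are exactly the details the paper leaves implicit by referring to Theorem~\ref{thm:complexTsolcosol}.
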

\begin{proof}
The proof is nearly identical to the proof of Theorem \ref{thm:complexTsolcosol}, except for that one has to consider a real basis of $\cent(C)$ by separating $WEP^T$ and $iWEP^T$ in Lemma \ref{lem:basiscent}.  
\end{proof}

Now we consider the real case.\footnote{The quaternionic version of $\sol(J)$, $\cosol(J)$ can be obtained in a similar manner but in this work we will not discuss the details.} Define auxiliary matrices as in Definition \ref{def:XTYT}. 
\begin{definition}[The ``realify" map]
Let $A$ be an $n\times m$ complex matrix. Define the \textit{realify} map $\realify{\,\,\cdot\,\,}:\mathbb{C}^{n\times m}\to \mathbb{R}^{2n\times 2m}$ as the following block matrices:
\begin{equation*}
    \realify{A} = \begin{bmatrix}\begin{array}{rr} \text{re}(A) & \text{im}(A) \\ -\text{im}(A) & \text{re}(A) \end{array}\end{bmatrix}.
\end{equation*}
\end{definition}

\begin{definition}\label{def:realXTYT}
Let $J$ be a nonsingular real $n\times n$ matrix. For $W\in\mathbb{R}^{n\times 2a}$, $U\in\mathbb{R}^{n\times 2b}$, and an integer $k\leq \min(a, b)$ we define the following matrices $X_\mathbb{R}$, $Y_\mathbb{R}$:
\begin{gather*}
    X_\mathbb{R}(k, J, W, U) = W(E_2\otimes E_{k}^{a, b})U^TJ^{T} - U(E_2\otimes E_{k}^{b, a}) W^TJ,\\
    Y_\mathbb{R}(k, J, W, U) = W(E_2\otimes E_{k}^{a, b})U^TJ^{T} + U(E_2\otimes E_{k}^{b, a}) W^TJ.
\end{gather*}
\end{definition}

The solution and cosolution sets $\sol(J), \cosol(J)\subset\mathbb{R}^{n\times n}$ for the real case follows.
\begin{theorem}[Computing real $\sol(J), \cosol(J)$ for a real $J$]\label{thm:realsolcosol}
Given a nonsingular $J\in\mathbb{R}^{n\times n}$, let $\Lambda(C)$ be the eigenvalues of $C=\cosq(J)$. For all real eigenvalues in $\Lambda(C)$, proceed as in Theorem \ref{thm:complexTsolcosol} with real Jordan chain matrices. Denote the resulting sets \eqref{eq:complexTsol}, \eqref{eq:complexTcosol} by $B_T^+(J,\mathbb{R})$, $B_T^-(J,\mathbb{R})$. For all other $\lambda\in\Lambda(C)\backslash\mathbb{R}$, let $r_1, \dots, r_m$ be the sizes of Jordan blocks of $C$ corresponding to $\lambda$. For $j=1, \dots, m$, select real Jordan chain matrices $W_\lambda^{(j)}, U_\lambda^{(j)}\in\mathbb{R}^{n\times 2r_j}$ corresponding to the pair $\lambda, \bar\lambda$ such that $C W_\lambda^{(j)} = W_\lambda^{(j)}\realify{J_{r_j}^\lambda}$ and $C^{-1}U_\lambda^{(j)} = U_\lambda^{(j)}\realify{J_{r_j}^\lambda}$. Then, the sets 
\begin{equation}\label{eq:realsol}
    B_\mathbb{R}^+(J) = B_T^+(J,\mathbb{R}) \cup \bigcup_{\lambda\in\Lambda(C)\backslash\mathbb{R}}\bigg(\bigcup_{1\le s, t\le m} \bigcup_{k=1}^{\min(r_s, r_t)} \{X_{\mathbb{R}}(k, J, W_\lambda^{(s)}, U_\lambda^{(t)})\}\bigg),
\end{equation}
\begin{equation}\label{eq:realcosol}
    B_\mathbb{R}^-(J) = B_T^-(J,\mathbb{R}) \cup \bigcup_{\lambda\in\Lambda(C)\backslash\mathbb{R}}\bigg(\bigcup_{1\le s, t\le m} \bigcup_{k=1}^{\min(r_s, r_t)} \{Y_{\mathbb{R}}(k, J, W_\lambda^{(s)}, U_\lambda^{(t)}) \}\bigg),
\end{equation}
span the real vector spaces $\sol(J)$ and $\cosol(J)$, respectively. 
\end{theorem}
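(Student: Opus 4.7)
The plan is to mimic the proof of Theorem \ref{thm:complexTsolcosol} over the reals. The algebraic infrastructure carries over directly: the direct sum $\sol(J)\oplus\cosol(J)=\cent(\cosq(J))$ of Corollary \ref{cor:solcosoldirectsum}, the projection formulas $Z\mapsto(Z\mp J^{-1}Z^TJ)/2$ of Theorem \ref{thm:solandcosol}, and the shift relations \eqref{eq:UWPQrelationship} are purely formal identities, and when $J$ is real the projections send real $Z$ to real matrices. So everything reduces to producing a real spanning set of $\cent_{\mathbb{R}}(C)$ in terms of real Jordan chain data of $C$ and $C^{-1}$, and then reading off the projected images under the $P=JU$ substitution.

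For each real eigenvalue of $C$, one may take Jordan chain matrices in $\mathbb{R}^n$, and Lemma \ref{lem:basiscent} yields real basis elements $W_\lambda^{(s)} E_k^{r_s,r_t}(JU_\lambda^{(t)})^T$; projecting by Theorem \ref{thm:solandcosol} reproduces $X_T$ and $Y_T$ of Definition \ref{def:XTYT}, which is $B_T^\pm(J,\mathbb{R})$. The genuinely new step is a complex conjugate pair $\{\lambda,\bar\lambda\}$: the Jordan blocks of $C$ at $\lambda$ and $\bar\lambda$ bundle over $\mathbb{R}$ into real blocks $\realify{J_{r_j}^\lambda}$ of size $2r_j$, with real chain matrices $W_\lambda^{(j)}, U_\lambda^{(j)}\in\mathbb{R}^{n\times 2r_j}$ obtained from the real and imaginary parts of the complex Jordan chains. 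The block of $\cent_{\mathbb{R}}(C)$ connecting the $s$th and $t$th realified blocks has real dimension $2\min(r_s,r_t)$, and the real analog of Lemma \ref{lem:basiscent} furnishes it with a spanning family of the form $W_\lambda^{(s)}(E_2\otimes E_k^{r_s,r_t})(JU_\lambda^{(t)})^T$ for $k=1,\dots,\min(r_s,r_t)$; iterating $\bigcup_{\lambda\in\Lambda(C)\setminus\mathbb{R}}$ over both members of each conjugate pair yields enough real spanning elements. Applying the two projections gives $X_{\mathbb{R}}$ and $Y_{\mathbb{R}}$, and hence $B_{\mathbb{R}}^\pm(J)$.

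The main obstacle is this real analog of Lemma \ref{lem:basiscent}, specifically the verification that $E_2\otimes E_k^{r_s,r_t}$ is the correct Kronecker factor. This rests on the elementary identity $E_2\realify{\lambda}=\realify{\bar\lambda}E_2$ and the ensuing intertwining $\realify{J_{r_s}^\lambda}(E_2\otimes E_k^{r_s,r_t})=(E_2\otimes E_k^{r_s,r_t})\realify{J_{r_t}^{\bar\lambda}}$, which, combined with the real counterpart of \eqref{eq:UWPQrelationship} (that $JU_\lambda$ serves as a chain matrix for $C^T$ at the conjugate block), places each candidate matrix into $\cent_{\mathbb{R}}(C)$. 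A dimension count, twice the complex count of Corollary \ref{cor:centdimcount}, confirms that the family spans. The iteration over both $\lambda$ and $\bar\lambda$ may introduce redundancy in the spanning set, which is harmless since only spanning is asserted here; the extraction of genuine bases is deferred to Sections \ref{sec:complexTdimcount}--\ref{sec:realdimcount}.
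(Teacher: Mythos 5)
Your proposal mirrors the paper's proof, but both make the same subtle overclaim at the crucial spanning step, and your more explicit version exposes it. You assert that the block of $\cent_{\mathbb{R}}(C)$ for a conjugate pair $\{\lambda,\bar\lambda\}$ has real dimension $2\min(r_s,r_t)$ (correct), that the family $W_\lambda^{(s)}(E_2\otimes E_k^{r_s,r_t})P_\lambda^{(t)T}$ contributes $\min(r_s,r_t)$ matrices (correct), and that ``iterating over both members of each conjugate pair yields enough real spanning elements.'' That last step fails: with any admissible choice of chains one has $W_{\bar\lambda}^{(s)}=W_\lambda^{(s)}\bigl[\begin{smallmatrix}I&0\\0&-I\end{smallmatrix}\bigr]$ and $P_{\bar\lambda}^{(t)}=P_\lambda^{(t)}\bigl[\begin{smallmatrix}I&0\\0&-I\end{smallmatrix}\bigr]$, and since $\bigl[\begin{smallmatrix}I&0\\0&-I\end{smallmatrix}\bigr](E_2\otimes E_k)\bigl[\begin{smallmatrix}I&0\\0&-I\end{smallmatrix}\bigr]=-(E_2\otimes E_k)$, the $\bar\lambda$ matrices are just the negatives of the $\lambda$ matrices. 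So the purported ``real analog of Lemma \ref{lem:basiscent}'' produces only half a basis of the conjugate-pair block of $\cent_{\mathbb{R}}(C)$. Concretely, in terms of complex chains with $W^{\mathbb{C}}=W_1+iW_2$, $W(E_2\otimes E_k)P^T$ equals $\mathrm{im}\bigl(W^{\mathbb{C}}E_k(P^{\mathbb{C}})^T\bigr)$, whereas a genuine real basis of the block needs both the $\mathrm{im}$ and the $\mathrm{re}$ families; the missing elements are $W\bigl(\bigl[\begin{smallmatrix}1&0\\0&-1\end{smallmatrix}\bigr]\otimes E_k\bigr)P^T=\mathrm{re}\bigl(W^{\mathbb{C}}E_k(P^{\mathbb{C}})^T\bigr)$. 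A $2\times2$ check makes this vivid: for $J=\bigl[\begin{smallmatrix}1&1\\-1&1\end{smallmatrix}\bigr]$ with $C=\cosq(J)=\bigl[\begin{smallmatrix}0&1\\-1&0\end{smallmatrix}\bigr]$, the family $\{W_\lambda E_2P_\lambda^T\}$ spans only $\mathrm{span}\{C\}$, while $\cent_{\mathbb{R}}(C)=\mathrm{span}\{I,C\}$ is two-dimensional.

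As a result, the clean argument from Theorem \ref{thm:complexTsolcosol} --- project a basis of $\cent(C)$ and conclude the images span --- does not transfer verbatim, because you are projecting only a half-dimensional subspace of $\cent_{\mathbb{R}}(C)$. The theorem's conclusion is nonetheless consistent with the paper's worked $4\times 4$ example and with the dimension counts of Theorem \ref{thm:realsoldim}, but establishing it requires an extra step your proposal (and the paper's own one-paragraph proof) omits: one must show that the half-dimensional subspace $V=\mathrm{span}\{W(E_2\otimes E_k)P^T\}$ is transverse to $\cosol(J)$ (resp. to $\sol(J)$), so that its projections are still surjective onto $\sol(J)$ (resp. $\cosol(J)$). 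Equivalently, one can reduce to the complex statement of Theorem \ref{thm:complexTsolcosol}, note $X_{\mathbb{R}}(k,J,W_\lambda,U_\lambda)=\mathrm{im}\bigl(X_T(k,J,W^{\mathbb{C}}_\lambda,U^{\mathbb{C}}_\lambda)\bigr)$, and use the fact that $\sol_{\mathbb{R}}(J)$ restricted to the $\lambda,\bar\lambda,\lambda^{-1},\bar\lambda^{-1}$ block is $\mathrm{re}\bigl(\sol_{\mathbb{C}}(J)|_{\lambda,\lambda^{-1}}\bigr)$ together with the nontrivial observation that the constant of proportionality in Proposition \ref{thm:XstlambdaXtslambdai} relating the $\lambda$- and $\lambda^{-1}$-indexed $X_T$'s carries an imaginary part, so that both $\mathrm{re}$ and $\mathrm{im}$ of the complex spanning vectors are recovered. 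Without some version of this supplementary argument the spanning claim is not proved.
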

\begin{proof}
This time we need a basis of $\cent(C)$ that lies inside $\mathbb{R}^{n\times n}$, which is a modification of Lemma \ref{lem:basiscent}. Instead of using a complex Jordan matrix to derive a centralizer basis, we use a real modified Jordan form of $C$. Since eigenvalues $\lambda, \bar\lambda$ exist together (with the same sized Jordan blocks) the two Jordan blocks $J_{r_j}^\lambda$ and $J_{r_j}^{\bar\lambda}$ together could be expressed as $\realify{J_{r_j}^\lambda}$. Then, with real Jordan chain matrices $W\in\mathbb{R}^{n\times 2r_s}, P\in\mathbb{R}^{n\times 2r_t}$ such that $CW = W\realify{J_{r_s}^\lambda}$ and $C^TP = P\realify{J_{r_t}^\lambda}$ (one can obtain such $W, P$ by realifying complex Jordan chain matrices of $\lambda$) it could be deduced that the collection of $W(E_2\otimes E_k^{r_s, r_t})P^T$ is a basis of $\cent(C)$. Then, we proceed as in Theorems \ref{thm:complexTsolcosol} to obtain the sets $B_\mathbb{R}^+(J), B_\mathbb{R}^-(J)$.
\end{proof}

\par The following proposition states that the selection of the Jordan chains in Theorems \ref{thm:complexTsolcosol}, \ref{thm:complexHsolcosol}, and \ref{thm:realsolcosol} does not affect the resulting vector spaces. 

\begin{proposition}[equivalence of Jordan chains]\label{thm:choiceofWU}
In Theorems \ref{thm:complexTsolcosol}, \ref{thm:complexHsolcosol}, and \ref{thm:realsolcosol}, the choices of the Jordan chains $W_\lambda^{(j)}, U_\lambda^{(j)}$ do not change the spanned vector spaces. 
\end{proposition}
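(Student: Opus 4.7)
The plan is to reduce the claim to Lemma \ref{lem:basiscent}, which already asserts that the spanning property of $\cent(C)$ holds for any valid choice of Jordan chain matrices, and then to transfer this property through the linear projections onto $\sol(J)$ and $\cosol(J)$ supplied by Corollary \ref{cor:solcosoldirectsum}.

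For Theorem \ref{thm:complexTsolcosol}, I would first observe that, given any Jordan chain matrix $U_\lambda^{(t)}$ of $C^{-1}$ at $\lambda$, the matrix $P_\lambda^{(t)}:=JU_\lambda^{(t)}$ is a Jordan chain matrix of $C^T$ at the same eigenvalue, by \eqref{eq:UWPQrelationship}. Substituting $P_\lambda^{(t)}$ into Lemma \ref{lem:basiscent}, the collection $\bigl\{W_\lambda^{(s)} E_k^{r_s,r_t} (U_\lambda^{(t)})^T J^T\bigr\}$ (over all eligible $\lambda,s,t,k$) spans $\cent(C)$ for \emph{any} choice of the Jordan chains $W_\lambda^{(s)}$ and $U_\lambda^{(t)}$. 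Next, the map $\pi_+:Z\mapsto \tfrac12(Z - J^{-1}Z^TJ)$ is a linear surjection from $\cent(C)$ onto $\sol(J)$ by Theorem \ref{thm:solandcosol} and Corollary \ref{cor:solcosoldirectsum}, and a direct computation (using $(E_k^{a,b})^T=E_k^{b,a}$) shows that $2\pi_+$ sends the $(k,s,t,\lambda)$-generator above to $X_T(k,J,W_\lambda^{(s)},U_\lambda^{(t)})$. Since linear surjections send spanning sets to spanning sets, $B_T^+(J)$ spans $\sol(J)$ regardless of the initial Jordan chain choices; the analogous argument with $\pi_-$ handles $B_T^-(J)$ and $\cosol(J)$.

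For Theorem \ref{thm:complexHsolcosol}, the same strategy applies, with two adjustments. First, the Hermitian analog of \eqref{eq:UWPQrelationship} shows that $\overline{JU_{\bar\lambda}^{(t)}}$ is a Jordan chain matrix of $C^T$ at $\lambda$, so that Lemma \ref{lem:basiscent} produces the generators $W_\lambda^{(s)} E_k^{r_s,r_t} (U_{\bar\lambda}^{(t)})^H J^H$ of $\cent(C)$. Second, since $\sol^H(J)$ and $\cosol^H(J)$ are only real vector spaces (Remark \ref{rem:solHreal}), one must begin with a real spanning set of the complex vector space $\cent(\cosq^H(J))$, obtained by adjoining $i$ times each generator from Lemma \ref{lem:basiscent}; applying the real-linear projection then yields the doubled spanning sets $B_H^+(J)$ and $B_H^-(J)$. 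For Theorem \ref{thm:realsolcosol}, one replaces Lemma \ref{lem:basiscent} by the real analog used inside its proof (pairs of complex-conjugate eigenvalues are bundled into a single real chain via $\realify{\,\cdot\,}$) and the projection argument runs without change.

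The main obstacle is purely bookkeeping, particularly in the $H$-case, where one must verify that the substitution $P = \overline{JU}$ correctly produces a Jordan chain of $C^T$; once this identification is set up, the proposition reduces to the elementary fact that a linear surjection sends any spanning set of its domain to a spanning set of its image, so that the spanned vector spaces $\sol(J)$ and $\cosol(J)$ never actually depend on the initial choice of Jordan chain matrices.
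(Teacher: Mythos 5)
Your argument is logically valid, but it proves only a weaker statement than what the paper's proof establishes and what the proposition is subsequently used for. Running Lemma \ref{lem:basiscent} and the projection $\pi_+$ shows that $B_T^+(J)$ spans $\sol(J)$ for \emph{any} choice of Jordan chains --- but that is literally the content of Theorem \ref{thm:complexTsolcosol} itself, whose proof already uses exactly this ``project a spanning set of $\cent(C)$'' argument. So under your reading the proposition becomes a tautological restatement of the theorems it references.

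The paper's proof establishes something genuinely finer: a \emph{block-local, filtered} invariance. Fixing one eigenvalue $\lambda$ and one block pair $(s,t)$, it shows that for each $k$ the partial span $\mathrm{span}\bigcup_{j=1}^{k}\{X_T(j,J,W_\lambda^{(s)},U_\lambda^{(t)})\}$ is independent of the chosen Jordan chains. The key idea --- which is absent from your proposal --- is that two Jordan chain matrices for the same block are related by $W' = WT$ with $T$ an invertible upper-triangular Toeplitz matrix (this follows from $T\in\cent(J_r^\lambda)$), and then the Toeplitz structure yields the triangular expansion
$X_T(k,J,W',U)=\sum_{j=1}^{k} t_{r+j-k}\,X_T(j,J,W,U)$ with leading coefficient $t_r\neq 0$. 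Global surjectivity of $\pi_+$ gives no such control over sub-filtrations. This finer statement is exactly what the later arguments rely on: the proof of Proposition \ref{thm:XstlambdaXtslambdai} needs the filtered span-equality at a fixed $k$, and the proof of Theorem \ref{thm:complexTsoldim} explicitly invokes ``from the proof of Proposition \ref{thm:choiceofWU} we have $T_u,T_w$\ldots,'' i.e.\ the Toeplitz change-of-chain matrices themselves, not merely the top-level spanning conclusion. So while nothing in your argument is false, it misses the structural content that the proposition is meant to supply.
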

\begin{proof}
See Appendix \ref{sec:proofs}.
\end{proof}

\begin{remark}[Different expressions for basis elements]
The matrices $X_T$, $Y_T$, $X_H$, $Y_H$, $X_\mathbb{R}$, $Y_\mathbb{R}$ could be expressed in several different ways. Using the relationships \eqref{eq:UWPQrelationship}, one obtains some equivalent expressions. For example, if we substitute $J^{-1}P$ for $U$ and $J^{-T}Q$ for $W$, we obtain an equivalent definition of \eqref{eq:complexTXT}, 
\begin{equation}\label{eq:XTequiv1}
    X_T(k, J, P, Q) = J^{-T}QE_k^{a, b}P^T - J^{-1}P E_k^{b, a}Q^T. 
\end{equation}
Moreover, another equivalent expression could also be obtained from the second terms of \eqref{eq:sol}, \eqref{eq:cosol}. For example, applying the map $Z- J^{-T}\!Z^TJ^T$ on $Z = WEP^T\in\cent(\cosq(J))$ and substituting $P$ for $J^TU$ we obtain,
\begin{equation}\label{eq:XTsubstitute}
    X_T(k, J, W, U) = WE_k^{a, b}U^TJ - UE_k^{b, a}W^TJ^T.
\end{equation}
Also setting $Z = UEQ^T$ ($\in\cent(C^{-1})=\cent(C)$) one obtains similar expressions.
\end{remark}

\begin{proposition}\label{thm:XstlambdaXtslambdai}
For a given complex nonsingular $J$ and $C = \cosq(J)$, let $\lambda, \lambda^{-1}$ be eigenvalues of $C$ with the sizes of the Jordan blocks being $r_1, \dots, r_m$. For $j=1,\dots,m$, select the $j^{th}$ Jordan chain matrices $W_\lambda^{(j)}, U_\lambda^{(j)}\in\mathbb{C}^{n\times r_j}$ of $C, C^{-1}$ corresponding to $\lambda$. Similarly select Jordan chains $W_{\frac{1}{\lambda}}^{(j)}, U_{\frac{1}{\lambda}}^{(j)}$. Then, for $1\leq s, t \leq m$, the sets $\bigcup_{j=1}^k\big\{X_T(j, J, W_\lambda^{(s)}, U_\lambda^{(t)})\big\}$ and $\bigcup_{j=1}^k\big\{X_T(j, J, W_\frac{1}{\lambda}^{(t)}, U_\frac{1}{\lambda}^{(s)})\big\}$ span the same vector space. The same results are obtained for the matrices $Y_T, X_H, Y_H$. For a complex eigenvalue $\lambda$ of the real $\cosq(J)$ of a real $J$, the sets $\bigcup_{j=1}^k\big\{X_\mathbb{R}(j, J, W_\lambda^{(s)}, U_\lambda^{(t)})\big\}$ and $\bigcup_{j=1}^k\big\{X_\mathbb{R}(j, J, W_{\overline\lambda}^{(s)}, U_{\overline\lambda}^{(t)})\big\}$ (also true for $Y_\mathbb{R}$) span the same vector space. 
\end{proposition}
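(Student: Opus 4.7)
The plan is to exploit a symmetry of the projection $\pi^-(Z) := Z - J^{-1} Z^T J$ from Theorem \ref{thm:solandcosol} under the involution $\sigma: Z \mapsto -J^{-1} Z^T J$, which exchanges the generalized eigenspace summands $\cent(C_\lambda)$ and $\cent(C_{1/\lambda})$ of $\cent(C)$ while leaving $\pi^-$ invariant. This will show that the images of $\cent(C_\lambda)$ and $\cent(C_{1/\lambda})$ under $\pi^-$ coincide in $\sol(J)$, from which the equality of spans at $(\lambda,s,t)$ and $(1/\lambda,t,s)$ will follow upon specializing to the basis elements.

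First I would establish the identity $\pi^-(\sigma(Z)) = \pi^-(Z)$ for every $Z \in \cent(C)$ by the direct computation
\begin{equation*}
\pi^-(\sigma(Z)) = \sigma(Z) - J^{-1}\sigma(Z)^T J = -J^{-1}Z^T J + C^{-1}Z C = -J^{-1}Z^T J + Z = \pi^-(Z),
\end{equation*}
using $C^{-1}ZC = Z$ (since $Z$ commutes with $C$). Next, the inclusion $\sigma(\cent(C_\lambda)) \subseteq \cent(C_{1/\lambda})$ follows from the elementary similarity $J^{-1}C^T J = C^{-1}$: it sends the $\lambda$-generalized eigenspace of $C^T$ (which supports $Z^T$ when $Z$ is supported on the $\lambda$-eigenspace of $C$) to the $\lambda$-generalized eigenspace of $C^{-1}$, namely the $1/\lambda$-generalized eigenspace of $C$.

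I would then specialize to the basis elements. With $Z_j := W_\lambda^{(s)} E_j^{r_s,r_t}(U_\lambda^{(t)})^T J^T$, so that $X_T(j,J,W_\lambda^{(s)},U_\lambda^{(t)}) = \pi^-(Z_j)$, one computes $\sigma(Z_j) = -U_\lambda^{(t)} E_j^{r_t,r_s}(W_\lambda^{(s)})^T J \in \cent(C_{1/\lambda})$. Since $CU_\lambda^{(t)} = U_\lambda^{(t)}(J_{r_t}^\lambda)^{-1}$ and $(J_r^\lambda)^{-1}$ is similar to $J_r^{1/\lambda}$ via an upper triangular matrix $S_t$, one may choose $W_{1/\lambda}^{(t)} := U_\lambda^{(t)} S_t$ and $U_{1/\lambda}^{(s)} := W_\lambda^{(s)} S_s'$ analogously. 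Then $\sigma(Z_j) = W_{1/\lambda}^{(t)} \tilde E_j (U_{1/\lambda}^{(s)})^T J$, where $\tilde E_j := -S_t^{-1} E_j^{r_t,r_s} (S_s')^{-T}$. By the upper-triangularity of $S_t,S_s'$ together with the upper-left antidiagonal support of $E_j$, one checks that $\tilde E_j$ lies in $\operatorname{span}\{E_1^{r_t,r_s},\dots,E_j^{r_t,r_s}\}$ with nonzero coefficient on $E_j^{r_t,r_s}$. Combined with Proposition \ref{thm:choiceofWU} (applied to the equivalent convention $P = J^T U$ that produces the factor $J$ rather than $J^T$), this yields the equality of spans for every $k$.

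The remaining assertions follow by the same argument with notational modifications: for $Y_T$, replace $\pi^-$ by $\pi^+(Z) = Z + J^{-1}Z^T J$ and $\sigma$ by the sign-free involution $Z \mapsto J^{-1}Z^T J$; for $X_H, Y_H$, replace transpose by conjugate transpose throughout (using $J^{-1}C^H J = C^{-1}$ with $C = \cosq^H(J)$, and the eigenvalue pair $(\lambda, 1/\bar\lambda)$). The real cases $X_\mathbb{R}, Y_\mathbb{R}$ with nonreal $\lambda$ concern $\lambda \leftrightarrow \bar\lambda$ (without the $(s,t)\!\to\!(t,s)$ swap) and are immediate from the realify map, since $\realify{W_{\bar\lambda}^{(j)}}$ and $\realify{W_\lambda^{(j)}}$ span the same real column space up to a block automorphism that leaves the $X_\mathbb{R}$-constructions invariant. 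The main technical obstacle is verifying the filtration-preservation claim $\tilde E_j \in \operatorname{span}\{E_1^{r_t,r_s},\dots,E_j^{r_t,r_s}\}$ with nonzero leading coefficient; this is an elementary but attention-demanding bookkeeping in the centralizer of an upper triangular Jordan block, relying on the fact that $(J_r^\lambda)^{-1}$ and $J_r^{1/\lambda}$ share the eigenvalue $1/\lambda$ and are both upper triangular, so the intertwiner $S$ can be chosen upper triangular.
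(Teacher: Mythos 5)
Your proposal follows the paper's route in its essentials: write $(J_r^\lambda)^{-1}=K\,J_r^{1/\lambda}K^{-1}$ with $K$ upper triangular, use this to produce Jordan chain matrices for $1/\lambda$ from those for $\lambda$, pull the change of basis into the $E_j$ slot, invoke Proposition~\ref{thm:choiceofWU}, and conclude from filtration preservation. The $\pi^-$--$\sigma$ framing you add (showing $\pi^-(\sigma(Z))=\pi^-(Z)$ and that $\sigma$ swaps the $\lambda$ and $1/\lambda$ summands of $\cent(C)$) is a genuinely nice conceptual envelope that the paper does not spell out, and it explains cleanly \emph{why} the same image arises from either eigenvalue; but the computational core coincides with the paper's.

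The one place you should tighten the argument is the assertion that, ``By the upper-triangularity of $S_t,S_s'$ together with the upper-left antidiagonal support of $E_j$, one checks that $\tilde E_j\in\operatorname{span}\{E_1^{r_t,r_s},\dots,E_j^{r_t,r_s}\}$.'' Upper triangularity alone is not enough: for generic invertible upper triangular $S_t^{-1}$ and lower triangular $(S_s')^{-T}$ the product $S_t^{-1}E_j(S_s')^{-T}$ is supported strictly above the $j$-th antidiagonal but need not be \emph{constant along} antidiagonals, hence need not lie in the span of $E_1,\dots,E_j$ (already visible for $j=2$, $r_s=r_t=2$). What forces the antidiagonal-Toeplitz structure is the fact that $S_t$ and $S_s'$ are not arbitrary upper triangular matrices but intertwiners, $(J_{r_t}^\lambda)^{-1}S_t = S_t J_{r_t}^{1/\lambda}$ and likewise for $S_s'$; combining this with $J_{r_t}^{1/\lambda}E_j = E_j (J_{r_s}^{1/\lambda})^T$ shows that $S_t^{-1}E_j(S_s')^{-T}$ satisfies the same two-sided Jordan-block intertwining equation whose solution space is exactly $\operatorname{span}\{E_1,\dots,E_{\min(r_t,r_s)}\}$; the upper-triangularity then only serves to cut the span down to $\{E_1,\dots,E_j\}$ with nonzero leading coefficient. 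This is precisely what the paper encodes by observing that $H := K_sE_k^{r_s,r_t}K_t^T E_k^{r_t,r_s}$ \emph{commutes with} $J_{r_s}^\lambda$ and is therefore upper triangular Toeplitz. Your ``attention-demanding bookkeeping'' paragraph should be built around that commutation/intertwining fact, not around upper triangularity alone.
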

\begin{proof}
See Appendix \ref{sec:proofs}. 
\end{proof}

\par Now let us discuss a few examples. 
\begin{example}
Let $J$ and $\cosq(J)$ be
\begin{equation*}
    J = \text{\scalebox{0.8}[0.8]{$\begin{bmatrix} \begin{array}{rrrr}5 & 6 & \mi9 & \mi9 \\
    1 & 0 & \mi1 & 1 \\ \mi3 & \mi6 & 7 & 7 \\ \mi6 & 2 & 2 & 0
    \end{array}\end{bmatrix}$}}, \hspace{0.5cm} \cosq(J)= 
    \frac{1}{2} \text{\scalebox{0.8}[0.8]{$\begin{bmatrix} \begin{array}{rrrr} 0 & 2 & 0 & \mi3 \\ 
    \mi5 & 8 & \mi3 & \mi6 \\ \mi1 & 2 & 1 & \mi3 \\ \mi2 & 0 & 2 & 1 
    \end{array} \end{bmatrix}$}}.
\end{equation*}
The Jordan form of $C = \cosq(J)$ is $J_2^2 \oplus J_2^{1/2}$. Computing generalized eigenvector chains of $C, C^{-1}$ for the eigenvalue $\lambda=2$, we get two Jordan chain matrices
\begin{equation*}
    W_2^{(1)} = \text{\scalebox{0.8}[0.8]{$\begin{bmatrix}\begin{array}{rr}
         6 & 1 \\ 12 & 8 \\ 6 & 1 \\ 0 & 0
    \end{array}\end{bmatrix}$}}, \,\, 
    U_2^{(1)} = \text{\scalebox{0.8}[0.8]{$\begin{bmatrix}\begin{array}{rr}
         \mi4 & 7 \\ \mi8 & 15\\ \mi4 & 8 \\ \mi4 & 7
    \end{array}\end{bmatrix}$}}.
\end{equation*}
From Proposition \ref{thm:XstlambdaXtslambdai} we only need $\sum_{j=1}^2\{X_T(j, J, W_2^{(1)}, U_2^{(1)})\}$,
\begin{gather*}
    W_2^{(1)}E_1^{2,2}(U_2^{(1)})^TJ^T-U_2^{(1)}E_1^{2,2}(W_2^{(1)})^TJ= 
    24\text{\scalebox{0.8}[0.8]{$\begin{bmatrix} 
    \begin{array}{rrrr}5&  \mi1&  \mi3&  0\\
     10&  \mi2&  \mi6&  0\\
      5&  \mi1&  \mi3&  0\\
      4&   0&  \mi4&  0\\
    \end{array}\end{bmatrix}$}},\\
    W_2^{(1)}E_2^{2,2}(U_2^{(1)})^TJ^T-U_2^{(1)}E_2^{2,2}(W_2^{(1)})^TJ = 
    8\text{\scalebox{0.8}[0.8]{$\begin{bmatrix} \begin{array}{rrrr}
    \mi23&  4&  12&   6\\
     \mi46&  5&  30&  12\\
     \mi26&  4&  15&   6\\
     \mi16&  0&  16&   3\\
    \end{array}
    \end{bmatrix}$}}.
\end{gather*}
These two matrices form a basis of the complex vector space $\sol(J)$. 
\end{example}

\begin{example} We discuss the case of a real $J$ (and real solution set $\sol(J)$) where its cosquare has complex eigenvalues. Let $J$ and $C = \cosq(J)$ be given as
\begin{equation*}
    J = \text{\scalebox{0.8}[0.8]{$\begin{bmatrix}\begin{array}{rrrr}
    \mi1 & 0 & \mi3 & \mi2 \\
    1 & 0 & 1 & 0 \\ \mi2 & 2 & 4 & \mi1\\
    0 & \mi1 & \mi1 & \mi2
    \end{array}\end{bmatrix}$}}, \hspace{0.5cm} 
    \cosq(J) = \frac{1}{6} \text{\scalebox{0.8}[0.8]{$\begin{bmatrix}\begin{array}{rrrr}
    6&  \mi2&   \mi6&   1\\
      0&   2&  \mi12&  \mi7\\
      0&   2&    6&   2\\
     \mi6&   4&    6&   4\\
    \end{array}\end{bmatrix}$}}.
\end{equation*}
The four eigenvalues $\lambda, \overline\lambda, 1/\lambda, 1/\overline\lambda$ are $1+i, 1-i, 0.5+0.5i, 0.5-0.5i$. Proceeding as described in Theorem \ref{thm:realsolcosol} and using Proposition \ref{thm:XstlambdaXtslambdai}, we obtain four (real) Jordan chain matrices $W_\lambda^{(1)}, U_\lambda^{(1)}, W_{\frac{1}{\lambda}}^{(1)}, U_{\frac{1}{\lambda}}^{(1)}$:
\begin{equation*}
    \text{\scalebox{0.8}[0.8]{$\begin{bmatrix}\begin{array}{rr}
    2 & 3 \\ 13 & 0 \\ \mi2 & \mi3 \\ \mi4 & \mi6 \end{array}\end{bmatrix}, 
    \begin{bmatrix}\begin{array}{rr}
    \mi5 & 0 \\ \mi4 & 3 \\ \mi1 & \mi3\\ 1 & 3 
    \end{array}\end{bmatrix},
    \begin{bmatrix}\begin{array}{rr}
    \mi5 & 0 \\ \mi4 & 3 \\ \mi1 & \mi3\\ 1 & 3\end{array}\end{bmatrix}, 
    \begin{bmatrix}\begin{array}{rr}
    2 & 3 \\ 13 & 0 \\ \mi2 & \mi3 \\ \mi4 & \mi6 \end{array}\end{bmatrix}$}},
\end{equation*} 
satisfying $CW_{\lambda}^{(1)} = W_{\lambda}^{(1)}\begin{bsmallmatrix}\,1 & 1 \\ \,\mis 1 & 1 \end{bsmallmatrix}$, $C^{-1}U_{\lambda}^{(1)} = U_{\lambda}^{(1)}\begin{bsmallmatrix}\,1 & 1 \\ \,\mis 1 & 1 \end{bsmallmatrix}$ and similar identities for $\frac{1}{\lambda}$. The two basis elements $X_\mathbb{R}(1, J, W_\lambda^{(1)}, U_\lambda^{(1)}), X_\mathbb{R}(1, J, W_{\frac{1}{\lambda}}^{(1)}, U_{\frac{1}{\lambda}}^{(1)})$ are
\begin{equation*}
    \frac{1}{3}\text{\scalebox{0.8}[0.8]{$\begin{bmatrix}\begin{array}{rrrr}
    13&   \mi8&  \mi34&   14\\
       2&  \mi13&  \mi62&  \mi20\\
       8&    8&    7&   10\\
     \mi32&   16&   20&   \mi7 \end{array}\end{bmatrix}$}}, 
    \frac{1}{3}\text{\scalebox{0.8}[0.8]{$\begin{bmatrix}\begin{array}{rrrr}
    \mi39&    9&   22&  \mi32\\
     \mi51&   39&   86&  \mi10\\
      \mi9&   \mi9&  \mi16&    5\\
      36&  \mi18&   10&   16
    \end{array}\end{bmatrix}$}},
\end{equation*}
which form a real basis of the two dimensional linear subspace $\sol(J)$. 
\end{example}

\begin{remark}[Symmetric space]
For a given square matrix $J$ and $C\!=\!\cosq(J)$ let $U$ be the Lie group $\{G : G\text{ invertible}, GC = CG\}$ with its Lie algebra $\mathfrak{u}=\cent(C)$. The decomposition $\mathfrak{u} = \{X:X^T\!J+JX=0\}+\{X:X^T\!J-JX=0\}$ can be obtained by the eigenspaces of the involution $X\mapsto -J^{-1}X^TJ$. This becomes the tangent space of a (not necessarily Riemannian) symmetric space $U/K$ where $K$ is the automorphism group $\{G : G^T\!JG = J\}$. For example, if $J= I_n$ we have a Riemannian symmetric space $\text{GL}(n, \mathbb{R})/\text{O}(n)$. Another example would be $J = I_{p,q}$ where $I_{p,q} = \text{diag}(\underbrace{1, \dots, 1}_{p}, \underbrace{-1, \dots, -1}_{q})$. We obtain the pseudo-Riemannian symmetric space $\text{GL}(n, \mathbb{R}) / \text{O}(p, q)$.
\end{remark}

\subsection{Singular $J$}\label{sec:singularJbackground}

For a nonsingular $J$ the eigenstructure (Jordan form) of the cosquare plays a central role. However, if $J$ is singular the cosquare no longer exists. Rather, we can work with the \textit{Kronecker structure} of the matrix pencil $J - \lambda J^T$. The Kronecker structure of $A-\lambda B$ reveals the usual eigenstructure of $B^{-1}A$ (for invertible $B$), as well as the generalized eigenstructure ($\infty,\frac{0}{0}$ situations) when $B^{-1}A$ is not well defined. In this section $\lambda$ is always the indeterminate. 

\par The structured matrix pencil $J-\lambda J^T$ is called a $T$-palindromic pencil and the Kronecker forms of the palindromic pencils are studied in \cite{schroder2006canonical,schroder2008thesis}. The Kronecker structure of $J-\lambda J^T$ is consisting of (\rn{1}) Jordan blocks of nonzero eigenvalues, (\rn{2}) $(0, \infty)$ Jordan block pairs and (\rn{3}) pairs of singular blocks.

\par Define the set $\mathcal{Z}_J\subset \mathbb{C}^{n\times n}\times\mathbb{C}^{n\times n}$ consisted of pairs of matrices $Z_1, Z_2$
\begin{equation*}
    \mathcal{Z}_J := \{(Z_1, Z_2) : Z_1 (J - \lambda J^T) - (J - \lambda J^T) Z_2 = 0\}.
\end{equation*}
To have $Z_1 (J - \lambda J^T) - (J - \lambda J^T) Z_2 = 0$ for all $\lambda$ it reduces down to two equations
\begin{equation}\label{eq:Z1Z2equation}
    Z_1J = JZ_2 \hspace{0.5cm}\text{ and }\hspace{0.5cm} Z_1J^T = J^TZ_2.
\end{equation}
The set $\mathcal{Z}_J$ is a linear subspace whose dimension can be computed by the outline suggested in \cite{demmel1995dimension}, under the name of the codimension of the orbits. 

\par Recall from Corollary \ref{cor:solcosoldirectsum} that $\sol(J)\,\oplus\,\cosol(J)=\cent(J^{-T}\!J)$. When $J$ is singular, $\sol(J) \,\cap\, \cosol(J)$ is nontrivial. Thus we define the product $\sol(J)\times\cosol(J) = \{(X, Y):X\in\sol(J), Y\in\cosol(J)\}$ which plays the role of $\sol(J)\oplus\cosol(J)$ in previous sections. The following lemma describes what might be called a ``higher order 45 degree rotation" between pairs $(Z_1,Z_2)$ that satisfy equations \eqref{eq:Z1Z2equation} and pairs $(X,Y)$ that consist of solutions and cosolutions:

\begin{lemma}\label{lem:bijectionZandCS}
For a given $J$, $\mathcal{Z}_J$ and $\sol(J)\times\cosol(J)$ are diffeomorphic.
\end{lemma}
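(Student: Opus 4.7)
The plan is to exhibit an explicit linear bijection between $\mathcal{Z}_J$ and $\sol(J)\times\cosol(J)$; since both spaces are linear subspaces of a finite-dimensional matrix space, any linear isomorphism between them is automatically a diffeomorphism. Motivated by the ``higher order $45$ degree rotation'' description (a transpose-twisted analog of the elementary decomposition of a matrix into symmetric and skew-symmetric parts), I would propose
\[
\Phi : \sol(J)\times\cosol(J) \longrightarrow \mathcal{Z}_J, \qquad \Phi(X,Y) \;=\; \bigl((Y-X)^T,\; X+Y\bigr).
\]

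The first step is to verify that $\Phi$ lands in $\mathcal{Z}_J$. Writing $Z_1 = (Y-X)^T$ and $Z_2 = X+Y$, the defining identities $X^T\!J = -JX$ and $Y^T\!J = JY$ immediately give $Z_1 J = (Y-X)^T J = JY + JX = J Z_2$. Transposing the same identities yields $X^T\!J^T = -J^T\!X$ and $Y^T\!J^T = J^T\!Y$, and the analogous computation produces $Z_1 J^T = J^T Z_2$. Hence both equations in \eqref{eq:Z1Z2equation} are satisfied.

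The second step is to write down the candidate inverse
\[
\Psi : \mathcal{Z}_J \longrightarrow \sol(J)\times\cosol(J), \qquad \Psi(Z_1,Z_2) \;=\; \left(\tfrac{1}{2}(Z_2 - Z_1^T),\; \tfrac{1}{2}(Z_2 + Z_1^T)\right),
\]
obtained by formally solving the linear system $Z_1 = (Y-X)^T$, $Z_2 = X+Y$ for the pair $(X,Y)$. To show $X = (Z_2-Z_1^T)/2$ is a solution, I would expand $X^T\!J + JX$ and observe that the cross terms $-Z_1 J + J Z_2$ cancel by one equation of $\mathcal{Z}_J$, while the remaining $Z_2^T J - J Z_1^T$ vanishes because it is the transpose of the other equation $Z_1 J^T = J^T Z_2$. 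The same algebra shows $Y = (Z_2 + Z_1^T)/2$ is a cosolution. The identities $\Phi\circ\Psi = \mathrm{id}$ and $\Psi\circ\Phi = \mathrm{id}$ then follow from the elementary computations $(X+Y)\pm(Y-X) \in \{2X,\, 2Y\}$ and their transposed counterparts.

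Since $\Phi$ and $\Psi$ are mutually inverse linear maps between finite-dimensional vector spaces, $\Phi$ is a linear isomorphism and therefore a diffeomorphism. There is no real obstacle beyond discovering the correct map; once the transposed rotation is in hand, every verification collapses to a one-line manipulation using the defining equations of $\sol(J)$, $\cosol(J)$, and $\mathcal{Z}_J$.
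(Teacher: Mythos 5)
Your proof is correct and follows essentially the same approach as the paper: you exhibit an explicit linear bijection between $\sol(J)\times\cosol(J)$ and $\mathcal{Z}_J$ together with its inverse, verifying membership directly from the defining equations $X^T\!J=-JX$, $Y^T\!J=JY$, $Z_1J=JZ_2$, $Z_1J^T=J^TZ_2$. The paper's one-line proof uses the map $(Z_1,Z_2)\mapsto(Z_1^T-Z_2,\,Z_1^T+Z_2)$ with inverse $(X,Y)\mapsto\bigl((X^T+Y^T)/2,\,(Y-X)/2\bigr)$, which differs from yours only by a scalar normalization, so the content is identical (and your writeup helpfully spells out the verification that the paper leaves implicit).
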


\begin{proof}
The map $(Z_1, Z_2)\mapsto (Z_1^T-Z_2, Z_1^T+Z_2)$ is a diffeomorphism from $\mathcal{Z}_J$ to $\sol(J)\times \cosol(J)$ with the inverse map $(X, Y)\mapsto ( (X^T+Y^T)/2, (Y-X)/2 )$. 
\end{proof}

\par As a result, it suffices to compute $\mathcal{Z}_J$ to obtain $\sol(J)$. To see the analogy to Section \ref{sec:nonsingulartheory}, let us assume that $J$ is nonsingular. Explicitly solving \eqref{eq:Z1Z2equation} we obtain $\mathcal{Z}_J = \{(Z^T, J^{-1}Z^TJ) : Z\in\cent(J^{-T}\!J)\}$ with $Z^T = Z_1$ in \eqref{eq:Z1Z2equation}. The map $(Z_1, Z_2)\mapsto Z_1^T-Z_2$ is exactly the map $Z\mapsto Z - J^{-1}Z^TJ$ in Theorem \ref{thm:solandcosol}. 

\par Recall we began with the explicit expression of a basis of $\cent(\cosq(J))$ to compute $\sol(J)$ for a nonsingular $J$. For singular $J$, on the other hand, a basis of $\mathcal{Z}_J$ is less well known but still computable. In particular, we need to compute a pair $(E, F)$ such that $E\cdot K_1(\lambda)^T-K_2(\lambda)\cdot F= 0$ for two Kronecker blocks $K_1$ and $K_2$, which play the role of $E_k^{a, b}$ matrix when $J$ is nonsingular. As discussed in Section 5 of \cite{demmel1995dimension}, (or similarly in \cite{de2011conjtrans,de2011trans}) computing a basis of the collection of all $(E, F)$ can be broken down into computing $(E, F)$ of each component and interaction between components. In Appendix \ref{sec:EFappendix} we give a full basis of the collection of $(E, F)$ pairs for each Kronecker block and interaction. 

\par Furthermore, we define an extension of Jordan chain matrices for matrix pencils. Let $K(\lambda)$ be an $r\times r$ canonical block in the Kronecker structure of an $n\times n$ pencil $J - \lambda J^T$. (Since singular blocks $L_j, L_j^T$ always come in pairs we group them to make a single square canonical block.) Select $n\times r$ matrices $W, U, P, Q$ that satisfy\footnote{These are bases of the \textit{deflating spaces} and in particular for singular pencils they are bases of the \textit{reducing spaces}. Recall that Jordan chains are often thought as a basis of invariant subspaces. Deflating and reducing subspaces \cite{van1983reducing} are extension of the invariant subspace for matrix pencils.} 
\begin{equation}\label{eq:pencilJordanmatrices}
    (J-\lambda J^T)W = Q \cdot K(\lambda)\hspace{0.5cm}\text{and}\hspace{0.5cm}(J^T-\lambda J)U = P\cdot K(\lambda).
\end{equation}
If $K(\lambda)$ is the usual Jordan block, i.e., $K(\lambda) = J_r^\alpha - \lambda I_r$, \eqref{eq:pencilJordanmatrices} agrees with the definition of the ordinary Jordan chain matrices $W, U, P, Q$ in \eqref{eq:WUPQdefinition}. 

\par The union of all $(QEU^T, WFP^T)$ for $W, U, P, Q$ that satisfy $(J^T-\lambda J)U = P\cdot K_1(\lambda)$, $(J-\lambda J^T)W = Q\cdot K_2(\lambda)$, form a basis of $\mathcal{Z}_J$. Using the mapping in Lemma \ref{lem:bijectionZandCS} on $\mathcal{Z}_J$ and collecting $UE^TQ^T-WFP^T$ one obtains a basis of $\sol(J)$.  

\begin{theorem}\label{thm:complexTsolpencilversion}
Given $J\in\mathbb{C}^{n\times n}$ let $K_1(\lambda), \dots, K_m(\lambda)$ be the (square) Kronecker blocks of $J-\lambda J^T$ with the block sizes $r_1, \dots, r_m$. Select for all $k=1,\dots,m$ the matrices $W^{(k)}, U^{(k)}, P^{(k)}, Q^{(k)}\in\mathbb{C}^{n\times r_k}$ such that $(J-\lambda J^T)W^{(k)} = Q^{(k)} K_k(\lambda)$ and $(J^T-\lambda J)U^{(k)} = P^{(k)} K_k(\lambda)$. For $s\neq t$ let $\{(E_j^{s,t}, F_j^{s,t})\}_{j=1, \dots, 2d}$ be the basis of the collection of $(E, F)$ for the interaction of $K_s(\lambda)$ and $K_t(\lambda)$, listed in Appendix \ref{sec:EFappendix}. Denote the union of all $\{U^{(s)}(E_j^{s, t})^T(Q^{(t)})^T-W^{(t)}F_j^{s,t}(P^{(s)})^T\}_{j=1, \dots, d}$ and $\{U^{(t)}(E_j^{s,t})^T(Q^{(s)})^T - W^{(s)}F_j^{s,t}(P^{(t)})^T\}_{j=d+1, \dots, 2d}$ for all $1\leq s\neq t\leq m$ by $B_{\text{inter}}$. Denote the collection of all matrices $U^{(s)}(E_j^{s, s})^T(Q^{(s)})^T-W^{(s)}F_j^{s,s}(P^{(s)})^T$, for $s = 1, \dots, m$, by $B_{\text{diag}}$. The set $B_{\text{inter}}\cup B_{\text{diag}}$ spans $\sol(J)$. 
\end{theorem}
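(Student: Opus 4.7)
The plan is to mirror the proof of Theorem \ref{thm:complexTsolcosol}, with the palindromic Kronecker structure of $J-\lambda J^T$ playing the role that the Jordan form of $\cosq(J)$ plays in the nonsingular case, and then to transport the resulting basis of $\mathcal{Z}_J$ to $\sol(J)$ via Lemma \ref{lem:bijectionZandCS}.

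First I would reduce to producing a spanning set of $\mathcal{Z}_J$. The map $(Z_1,Z_2)\mapsto Z_1^T-Z_2$, which is the first component of the diffeomorphism in Lemma \ref{lem:bijectionZandCS}, is linear and surjects onto $\sol(J)$; hence the image of any spanning set of $\mathcal{Z}_J$ spans $\sol(J)$. Under this map the ansatz pair
\[
Z_1=Q^{(t)}E(U^{(s)})^T,\qquad Z_2=W^{(t)}F(P^{(s)})^T
\]
is sent exactly to $U^{(s)}E^T(Q^{(t)})^T - W^{(t)}F(P^{(s)})^T$, which matches the form of the elements in $B_{\text{inter}}$ (for $s\neq t$) and, with $s=t$, of $B_{\text{diag}}$.

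Second, I would verify that this ansatz pair lies in $\mathcal{Z}_J$ precisely when $(E,F)$ satisfies the block-level equation. Transposing the pencil identity $(J^T-\lambda J)U^{(s)} = P^{(s)}K_s(\lambda)$ gives $(U^{(s)})^T(J-\lambda J^T)=K_s(\lambda)^T(P^{(s)})^T$, and combining this with $(J-\lambda J^T)W^{(t)} = Q^{(t)}K_t(\lambda)$ yields
\begin{equation*}
Z_1(J-\lambda J^T)=Q^{(t)}E\,K_s(\lambda)^T(P^{(s)})^T, \quad (J-\lambda J^T)Z_2=Q^{(t)}K_t(\lambda)\,F(P^{(s)})^T.
\end{equation*}
Thus $(Z_1,Z_2)\in\mathcal{Z}_J$ iff $E\,K_s(\lambda)^T=K_t(\lambda)\,F$, which is the equation whose solutions are enumerated in Appendix \ref{sec:EFappendix}.

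Third, I would assemble a spanning set of $\mathcal{Z}_J$ using that appendix. For each ordered pair of canonical blocks $(K_s,K_t)$ the appendix produces a basis of the $(E,F)$ satisfying the block equation, and summing the corresponding ansatz pairs over all block pairs yields a spanning set. The standard reduction used in \cite{demmel1995dimension,de2011trans,de2011conjtrans}, once $J-\lambda J^T$ is conjugated to palindromic Kronecker canonical form, forces both $Z_1$ and $Z_2$ to decompose into contributions indexed by pairs of blocks, so these ansatz elements do exhaust $\mathcal{Z}_J$. Pushing the spanning set through $(Z_1,Z_2)\mapsto Z_1^T-Z_2$ gives exactly $B_{\text{inter}}\cup B_{\text{diag}}$; here $s=t$ produces $B_{\text{diag}}$, while an unordered off-diagonal pair $\{s,t\}$ accounts for two orientations (ansatz with $(s,t)$ vs.\ $(t,s)$), which is the bookkeeping reason for splitting the $2d$ basis $(E,F)$'s into the halves $j=1,\dots,d$ and $j=d+1,\dots,2d$.

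The main obstacle is the exhaustion claim in the third step: one must argue that after passing to the palindromic Kronecker canonical form of $J-\lambda J^T$, every $(Z_1,Z_2)$ satisfying \eqref{eq:Z1Z2equation} really can be written as a sum of the block ansatz pieces. This rests on two ingredients: that the columns of $W^{(k)},U^{(k)},P^{(k)},Q^{(k)}$ span the corresponding (left/right) reducing or deflating subspaces of the pencil, and that the block-diagonal structure of the canonical $K(\lambda)$ decouples the equation along block pairs. Once this reduction is granted, the remaining work is the case-by-case enumeration of $(E,F)$ across Kronecker block types (Jordan blocks at nonzero eigenvalues, $(0,\infty)$ pairs, and paired singular blocks $L_j,L_j^T$), which is carried out in Appendix \ref{sec:EFappendix}.
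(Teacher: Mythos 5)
Your proof follows essentially the same route the paper sketches just before the theorem: build a spanning set of $\mathcal{Z}_J$ from pairs $(QEU^T, WFP^T)$ satisfying the block-level equation $E\,K_s(\lambda)^T = K_t(\lambda)\,F$, then push it through the linear surjection $(Z_1,Z_2)\mapsto Z_1^T-Z_2$ from Lemma \ref{lem:bijectionZandCS}. Your algebraic check (transposing $(J^T-\lambda J)U^{(s)}=P^{(s)}K_s(\lambda)$ and matching against $(J-\lambda J^T)W^{(t)}=Q^{(t)}K_t(\lambda)$) is correct, as is the bookkeeping of the $2d$-element $(E,F)$ basis over the two orientations of an unordered block pair. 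You also explicitly flag the one nontrivial step that the paper leaves implicit (reduction of $\mathcal{Z}_J$ to blockwise contributions via the canonical form, with the columns of $W^{(k)},U^{(k)},P^{(k)},Q^{(k)}$ spanning the corresponding reducing/deflating subspaces), correctly attributing it to the standard argument in \cite{demmel1995dimension,de2011trans,de2011conjtrans}; your writeup is, if anything, more complete than the paper's.
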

Alike previous theorems, Theorem \ref{thm:complexTsolpencilversion} can also be extended to $\cosol(J)$ with $U^{(s)}(E_j^{s, t})^T(Q^{(t)})^T+W^{(t)}F_j^{s,t}(P^{(s)})^T$ and $U^{(t)}(E_j^{s,t})^T(Q^{(s)})^T + W^{(s)}F_j^{s,t}(P^{(t)})^T$.

\subsection{The exponential map and the Lie algebra}\label{sec:exp}
In Section \ref{sec:background} we have mostly discussed the tangent space of the group $\{G:G^*\!JG=J\}$. An important tool that connects the Lie group $\{G:G^*\!JG=J\}$ to its tangent space (Lie algebra) is the exponential map, $\exp:\{X:X^*\!J+JX=0\}\to \{G:G^*\!JG=J\}$. 

\par A natural question arises: Will the exponential map recover the whole Lie group $\{G:G^*\!JG=J\}$? The answer to this surjectivity problem is rather complicated, but it has been studied for classical Lie groups, e.g., see \cite{lai1977surjectivity,djokovic1997surjectivity}. To begin with, a classical result states that any connected, compact Lie group $G$ has a surjective exponential map $\exp:\text{Lie}(G)\to G$ \cite{djokovic1997surjectivity}. However this is not enough. 


\par Nonetheless, there are helpful results that help us understand the exponential map, and, further assist us when using the exponential map numerically. In the real case $G = G_J^\mathbb{R}$, a result by Sibuya \cite{sibuya1960note} is practical: For any matrix $Y\in G_J^\mathbb{R}$, $J\in\mathbb{R}^{n\times n}$ there exists a matrix $X$ in the tangent space, $X\in\{X:X^T\!J+JX=0\}$, such that $\exp(X) = Y$ or $\exp(X) = Y^2$. Furthermore if $Y$ has no real negative eigenvalues we can always find $X$ such that $\exp(X) = Y$. 


\par Numerically, we could take all possible square roots of all $\exp(X)$, $X\in\{X:X^T\!J+JX=0\}$ to obtain the whole group. If the eigenstructure of $\exp(X)$ has $m$ Jordan blocks, there exist $2^m$ matrix square roots (counting non-principal branches). From \cite[Theorem 7.2]{mackey2005structured} it is known that the Jordan blocks of matrices in $G$ always have their counterparts (either $(\lambda, 1/\lambda)$ or $(\lambda, 1/\bar\lambda)$ pair), and one could take non-principal branches of each paired Jordan blocks together to reduce complexity. (So that a matrix square root again has the correct Jordan block pairs.)

\par Chu extends the result of Sibuya to the complex case, as Theorem 7 of \cite{chu1982exponential} states the following: For a complex $J$, the exponential map of $G_J^H$ is surjective, which means every matrix $Y$ in $G_J^H$ could be obtained by exponentiating a matrix $X$ in $\sol^H(J)$. Also for any matrix $Y\in G_J$ we have $X$ in the tangent space $\sol(J)$ such that $\exp(X)=Y$ or $\exp(X) = Y^2$. Again, if $Y$ has no real negative eigenvalue we always have $X$ such that $\exp(X) = Y$ as in the real case.

\section{Dimension count, complex $X^T\!J+JX=0$}\label{sec:complexTdimcount}
In Sections \ref{sec:complexTdimcount} to \ref{sec:realdimcount} we compute the bases of the solution and cosolution by eliminating the overlapping elements. We define few useful direct sums of Kronecker blocks that appear in the Kronecker structure of $J-\lambda J^T$. 

\begin{definition}\label{def:auxmatrixcomplexT}
Define three paired Kronecker blocks (pencils) as follows: 
\begin{equation*}
    \mathcal{L}_n := L_n \oplus L_n^T ,\hspace{1cm}\mathcal{Z}_n := J_n^0 \oplus J_n^\infty, \hspace{1cm}\mathcal{J}_n^\lambda := J_n^\lambda \oplus J_n^{\frac{1}{\lambda}},
\end{equation*}
with sizes $(2n+1)\times (2n+1)$, $2n\times 2n$, and $2n\times 2n$, respectively.
\end{definition}

A simple modification of Theorem 2.1.(a) of \cite{horn2007canonical} is the following lemma. 

\begin{lemma}\label{lem:complexTstructure}
For $J\in\mathbb{C}^{n\times n}$, the Kronecker structure of the pencil $J-\lambda J^T$ could be divided into four parts as follows.
\begin{alignat*}{2}
    K_J = &\,\,\big(\mathcal{L}_{s_1}\oplus \dots \oplus \mathcal{L}_{s_a}\big) \hspace{4cm} &&(\text{singular $L$ block pairs})\\
    & \oplus (\mathcal{Z}_{t_1}\oplus \dots \oplus \mathcal{Z}_{t_b}\big) &&(\text{0, $\infty$ Jordan pairs}) \\
    & \oplus \big(J_{m_1}^1 \oplus \dots \oplus J_{m_c}^1\big) \oplus \big(J_{n_1}^{-1} \oplus \dots \oplus J_{n_d}^{-1}\big) && (\text{$\pm$ 1 Jordan blocks}) \\
    & \oplus \bigoplus_{\substack{j=1 \\ (\lambda_j\neq \pm1, 0, \infty) }}^e \mathcal{J}_{p_j}^{\lambda_j} && (\text{$\lambda_j, \lambda_j^{-1}$ Jordan pairs}).
\end{alignat*}
In the following theorem, we will say $\lambda_j \sim \lambda_k$ when $\{\lambda_j, \lambda_j^{-1}\}=\{\lambda_k, \lambda_k^{-1}\}$, i.e., when $\mathcal{J}_{p_j}^{\lambda_j}$ and $\mathcal{J}_{p_k}^{\lambda_k}$ represent the same Kronecker structure. 
\end{lemma}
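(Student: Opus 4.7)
The plan is to deduce this block decomposition by combining the congruence canonical form of a complex square matrix (Horn--Sergeichuk, Theorem 2.1(a) of \cite{horn2007canonical}) with the observation that congruence of $J$ is precisely strict equivalence of the palindromic pencil $J-\lambda J^T$. Concretely, if $K^T J K = J'$ with $K$ invertible, then
\begin{equation*}
    K^T (J - \lambda J^T) K = J' - \lambda (J')^T,
\end{equation*}
so the Kronecker canonical form of $J - \lambda J^T$ is preserved. It therefore suffices to read off the Kronecker structure for each of Horn--Sergeichuk's canonical blocks of $J$ and show that the four families listed in the lemma exhaust the possibilities.

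First I would recall the three families of Horn--Sergeichuk canonical blocks for complex matrices under $K^TJK$: (i) a singular block built from a shifted nilpotent Jordan-type structure, (ii) a single Jordan block at an eigenvalue of $J^{-T}J$ equal to $\pm 1$ (as well as a nilpotent type block encoding the singular spectrum), and (iii) paired blocks associated with reciprocal eigenvalue pairs $\{\lambda, 1/\lambda\}$, $\lambda \neq 0, \pm 1$. Then, block by block, I would compute the Kronecker form of the pencil obtained from each canonical form: the singular canonical block of size $2s+1$ yields $L_s\oplus L_s^T = \mathcal{L}_s$; the nilpotent canonical block of size $2t$ yields a $(0,\infty)$ Jordan pair $J_t^0 \oplus J_t^\infty = \mathcal{Z}_t$ (the zero eigenvalue comes from the nilpotency of $J$, and the $\infty$ eigenvalue from the matching nilpotency in $J^T$); the $\pm 1$ canonical blocks yield single Jordan blocks $J_m^{1}$ and $J_n^{-1}$ in the pencil; and the paired $\{\lambda,1/\lambda\}$ canonical block yields $J_p^\lambda \oplus J_p^{1/\lambda} = \mathcal{J}_p^\lambda$.

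A useful sanity check is that the $T$-palindromic symmetry
\begin{equation*}
    (J-\lambda J^T)^T \;=\; -\lambda \bigl(J - \lambda^{-1} J^T\bigr)
\end{equation*}
forces the Kronecker structure of $J-\lambda J^T$ to be invariant under the involution $\lambda\mapsto 1/\lambda$ combined with transposition of pencils. This automatically pairs rectangular singular blocks $L_s$ with $L_s^T$, pairs Jordan blocks at $\lambda\neq\pm 1$ with equal-size Jordan blocks at $1/\lambda$, and allows Jordan blocks at the self-reciprocal values $\lambda = \pm 1$ to appear singly. This symmetry argument both motivates the block list and rules out any additional unpaired block types.

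The main obstacle I anticipate is the bookkeeping around the $\{0,\infty\}$ pairs: unlike the generic $\{\lambda,1/\lambda\}$ case, one must verify that the nilpotent canonical block of $J$ genuinely produces matched Jordan blocks at $0$ and $\infty$ of the same size in the pencil (rather than, say, producing additional singular blocks). Handling this carefully, and confirming that no ``leftover'' blocks arise when translating from Horn--Sergeichuk's classification, is really the only nontrivial step; the rest reduces to direct computation of Kronecker canonical forms of small explicit pencils.
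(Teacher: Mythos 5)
Your proposal is correct and matches the paper's approach: the paper itself offers no proof beyond citing Horn--Sergeichuk's Theorem 2.1(a) and noting the lemma is a "simple modification" of it, and you are fleshing out exactly that citation, using the congruence-to-strict-equivalence observation and a block-by-block translation into Kronecker form. Your final cautionary remark should also note that the odd- and even-sized nilpotent canonical blocks of $J$ genuinely split between $\mathcal{L}$ (singular pair) and $\mathcal{Z}$ ($0,\infty$ pair) respectively, as you already distinguish in the preceding paragraph.
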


\begin{boxtheo}\label{thm:complexTsoldim}
Let $J$ be an $n\times n$ complex matrix with the Kronecker structure $K_J$ of the pencil $J-\lambda J^T$ given as Lemma \ref{lem:complexTstructure}. Then, the complex dimension of the solution of $X^T\!J+JX=0$ is the sum of: 
\begin{enumerate}[label=(\alph*)]
    \item Dimension $D_L$ from the singular blocks $\mathcal{L}_{s_j}$
    \begin{equation*}
        D_L = \sum_{j=1}^a (s_j+1) + \sum_{j < k} \max(2s_j+1, 2s_k+1) + (\#\text{ of }s_j=s_k).
    \end{equation*}
    \item Dimension $D_Z$ from $0$ and $\infty$ block pairs $\mathcal{Z}_{t_j}$
    \begin{equation*}
        D_Z = \sum_{j=1}^b t_j + \sum_{j<k} \min(2t_j, 2t_k).
    \end{equation*}
    \item Dimensions $D_{1}$ and $D_{-1}$ from $\pm 1$ Jordan blocks
    \begin{equation*}
        D_1 = \sum_{j=1}^c \Big\lfloor \frac{m_j}{2}\Big\rfloor + \sum_{j<k} \min(m_j, m_k), \hspace{0.3cm} D_{-1} = \sum_{j=1}^d \Big\lceil \frac{n_j}{2}\Big\rceil + \sum_{j<k} \min(n_j, n_k).
    \end{equation*}
    \item Dimension $D_P$ from all other paired Jordan blocks $\mathcal{J}_{p_j}^{\lambda_j}$
    \begin{equation*}
        D_P = \sum_{j=1}^e p_j + \sum_{\lambda_j\sim\lambda_k} \min(2p_j, 2p_k).
    \end{equation*}
    \item Dimension $D_I$ from the interaction of $\mathcal{L}$ blocks and the others
    \begin{equation*}
        D_I = \sum_{j=1}^a \Big( n - \sum_{k=1}^a (2s_k+1) \Big) = a (n - \sum_{k=1}^a (2s_k+1)).
    \end{equation*}
\end{enumerate}
\end{boxtheo}
\begin{proof}
\textit{(d) Blocks $\mathcal{J}_{p_j}^{\lambda_j}$ with eigenvalue pairs $\{\lambda_j, 1/\lambda_j\}$}: We begin with the most generic case. Recall that the blocks of the centralizer do not interact if they have different eigenvalues \cite{arnold1971matrices}. Since the solutions are projected from the centralizer of the cosquare of the nonsingular part of $J$, we also have that the solutions of different eigenvalues do not interact. Thus we focus on a fixed $\lambda$. Using the same setting as Theorem \ref{thm:complexTsolcosol}, abbreviate the matrix $X_T(k, J, W_\lambda^{(s)}, U_\lambda^{(t)})$ by $X_{\lambda,k,s,t}$ (and $Y_T$ by $Y_{\lambda,k,s,t}$). By Proposition \ref{thm:XstlambdaXtslambdai} we can eliminate linearly dependent elements in $B_T^+$ of $\eqref{eq:complexTsol}$, reducing $\bigcup_{k,s,t}\{X_{\lambda,k,s,t},X_{\frac{1}{\lambda},k,s,t}\}$ to $\bigcup_{k,s,t}\{X_{\lambda,k,s,t}\}$. Similarly for $B_T^-$ of \eqref{eq:complexTcosol} we reduce the set $\bigcup_{k,s,t}\{Y_{\lambda,k,s,t},Y_{\frac{1}{\lambda},k,s,t}\}$ to $\bigcup_{k,s,t}\{Y_{\lambda,k,s,t}\}$. So far we have the maximum sum of the solution and cosolution dimensions equal to the dimension of the subset of the centralizer corresponding to the $\lambda, 1/\lambda$ Jordan structures of the cosquare. Thus we conclude $\{X_{\lambda,k,s,t}:\forall k, s, t\}$ and $\{Y_{\lambda,k,s,t}:\forall k, s, t\}$ are both already linearly independent basis sets. The dimension $D_P$ follows as
\begin{equation*}
    D_P = \underbrace{\sum_{j=1}^e p_j}_{\text{All $X_{\lambda,k,j,j}$}} + \underbrace{\sum_{\lambda_j=\lambda_k}\min(2p_j, 2p_k)}_{\text{$X_{\lambda,k,j,k}$ with $j\neq k$}}.
\end{equation*}

\par \textit{(c) Jordan blocks with $\pm 1$ eigenvalues}: The Jordan blocks with $\lambda=\pm 1$ have a special property that $W_{\frac{1}{\lambda}}$ and $U_{\frac{1}{\lambda}}$ coincide with $W_\lambda$ and $U_\lambda$ respectively for all Jordan chain matrices. By Proposition \ref{thm:XstlambdaXtslambdai} the sets $\bigcup_k\{X_T(k,J,W_\lambda^{(s)},U_\lambda^{(t)})\}$ and $\bigcup_k\{X_T(k,J,W_\lambda^{(t)},U_\lambda^{(s)})\}$ span the same vector space (similarly for $Y_T$) if $s\neq t$. Using the same logic in the proof of (d), the maximum dimension sum of the two sets is the dimension of the $(s,t), (t,s)$ interaction of the centralizer. Thus we obtain the off-diagonal basis $\bigcup_{s<t}\bigcup_k\{X_T(k,J,W_\lambda^{(s)},U_\lambda^{(t)})\}$. We are left to determine the basis in the set $\bigcup_k\{X_T(k,J,W_\lambda^{(s)},U_\lambda^{(s)}\}$. Let us first consider $\lambda=1$. Since $J^TU_1$ = $JU_1$ we have $X_T$ equal to (let us drop the superscript $^{(s)}$ for a moment) $W_1 E_k U_1^TJ - U_1 E_k^T W_1^TJ$. Using the $K = K_s$ in the proof of Proposition \ref{thm:XstlambdaXtslambdai} there exist $U_1', W_1'$ such that $U_1'K=W_1$ and $W_1'K=U_1$. Moreover from the proof of Proposition \ref{thm:choiceofWU} we have $T_u, T_w$ such that $U_1' = U_1T_u$ and $W_1' = W_1T_w$ holds. Observe that $T_uKE_kK^TT_w^T=\sum_{j=1}^k c_jE_k$ with $c_k=(-1)^{k-1}$. If $k=1$ we have 
\begin{align*}
    X_T(1,J,W_1,U_1) &= W_1E_1 U_1^TJ - U_1 E_1^T W_1^TJ \\
    &= U_1T_uKE_1K^TT_w^TW_1^TJ-W_1T_wKE_1^TK^TT_u^TU_1^TJ,
\end{align*}
which becomes $X_T(1,J,W_1,U_1) = -X_T(1,J,W_1,U_1)$ and thus equals zero. Similarly for an odd $k$ $X_T(k,J,W_1,U_1)$ and $\bigcup_{j=1}^{k-1}\{X_T(j,J,W_1,U_1)\}$ are linearly dependent and for an even $k$ $Y_T(k,J,W_1,U_1)$ and $\bigcup_{j=1}^{k-1}\{Y_T(j,J,W_1,U_1)\}$ are linearly dependent. Since the dimension of $ \text{span}(\{X_T(k,J,W_1,U_1), Y_T(k,J,W_1,U_1)\})$ is $1$ we deduce that the dimensions of the solution and cosolution are $\lfloor m_s/2 \rfloor$ and $\lceil m_s/2 \rceil$, respectively. For $\lambda=-1$ it is similar except that $-J^TU_\lambda$ = $JU_\lambda$, which leads to the opposite; For $\lambda=-1$, $X_T$ does not add dimension if $k$ is even and $Y_T$ does not add dimension if $k$ is odd. Combining these two results we have
\begin{equation*}
    D_1 \hspace{-0.1cm}= \hspace{-0.2cm}\underbrace{\sum_{j=1}^c \lfloor \frac{m_j}{2}\rfloor}_{\text{From $X_{1,k,j,j}$}}\!\!+ \underbrace{\sum_{\lambda_j=\lambda_k}\!\!\min(m_j, m_k)}_{\text{$X_{1,k,j,k}$ with $j\neq k$}} \hspace{0.1cm}\text{ and }\hspace{0.1cm} D_{-1}\hspace{-0.1cm} = \hspace{-0.2cm}\underbrace{\sum_{j=1}^c \lceil \frac{m_j}{2}\rceil}_{\text{From $X_{-1,k,j,j}$}} \!\!+ \underbrace{\sum_{\lambda_j=\lambda_k}\!\!\min(m_j, m_k)}_{\text{$X_{-1,k,j,k}$ with $j\neq k$}}.
\end{equation*}

\par \textit{(b) $0$ and $\infty$ Jordan pair blocks $\mathcal{Z}_{t_j}$}: The $0, \infty$ Jordan pair $\mathcal{Z}_{t_j} = J_{t_j}^0 \oplus J_{t_j}^\infty$ in $K_J$ corresponds to the even sized $2t_j\times 2t_j$ Jordan block of eigenvalue $0$ in terms of the congruence canonical form. Although they contain zero and $\infty$ eigenvalues, we can treat them as a pair of eigenvalues $\lambda, 1/\lambda$ and proceed as above. The dimension count and the linearly independent basis elements are identical to the situation of (d). 

\par \textit{(a) Left-right singular pairs $\mathcal{L}_{s_j}$}: The Kronecker block $\mathcal{L}_{s_j}$ corresponds to the congruence canonical matrix $J_{2s_j+1}^0$ and this cannot be made into a nonsingular block by the M\"{o}bius transformation. In this case we use Theorem \ref{thm:complexTsolpencilversion} and the given basis of all $(E, F)$ blocks in Appendix \ref{sec:EFappendix}. Fix $\mathcal{L}_{s_j}$ and denote the canonical blocks provided in Appendix \ref{sec:EFsingular1} by $(E_1, F_1), \dots, (E_{2s_j+2}, F_{2s_j+2})$. We outline a similar technique to the proof of (d) to deduce that $UE_k^TQ^T - WF_kP^T$ and $UE_{k+s_j+1}^TQ^T - W F_{k+s_j+1} P^T$ are linearly dependent. (We drop the superscript $^{(j)}$ of $W, U, P, Q$.) Since we have a $T$-palindromic Kronecker structure \cite{schroder2006canonical} the canonical block $\mathcal{S}_{2s_j+1}$ (defined in Appendix \ref{sec:EFappendix}) is used instead of $\mathcal{L}_{s_j}$. From the construction $A^T(J-\lambda J^T)A = \mathcal{S}_{2s_j+1}\oplus \cdots$ of $\mathcal{S}_{2s_j+1}$ block, the first $2s_j+1$ columns of $A$ and $A^{-T}$ eligilble for $W$ and $Q$, as defined in \eqref{eq:pencilJordanmatrices}. Observe that $U$ can be selected as the matrix with the permuted columns of $W$ in the order of $s_j+1, s_j, \dots, 1, 2s_j+1, 2s_j, \dots, s_j+1$. (Select $P$ similarly with the same permuted columns of $Q$.) Then from the shapes of $(E_k, F_k)$ and $(E_{2s_j+2-k}, F_{2s_j+2-k})$ we find $UE_k^TQ^T=WF_{2s_j+2-k}P^T$ and $WF_kP^T = UE_{2s_j+2-k}^TQ^T$. The dimensions of $\sol(J)$ and $\cosol(J)$ are at most $s_j+1$ each and the maximum dimension sum is $2s_j+2$, obtaining a basis $\bigcup_{k=1}^{s_j+1}\{UE_k^TQ^T-PF_kW^T\}$ of $\sol(J)$ corresponding to $\mathcal{L}_{s_j}$. For the interaction between $\mathcal{L}_{s_j}$ and $\mathcal{L}_{s_k}$, (assume $s_j\leq s_k$ and let the dimension of $(E, F)$ from appendices \ref{sec:EFsingular1} or \ref{sec:EFsingular2} be $2d$) the set $\bigcup_{k=1}^{d}\{UE_k^TQ^T-PF_kW^T\}$ and $\bigcup_{k=d+1}^{2d}\{UE_k^TQ^T-PF_k^TW^T\}$ are linearly dependent as in Proposition \ref{thm:XstlambdaXtslambdai}. 

\par \textit{(e) Interaction between $\mathcal{L}_{s_j}$ and all other blocks}: As we discuss in Appendix \ref{sec:EFappendix}, only the interactions between the singular pairs $\mathcal{L}_{s_j}$ and the Jordan blocks are left. Let us fix a singular pair $\mathcal{L}_{s_j}$ and a Jordan block $J_{p_k}^{\lambda_k}$. From Appendix \ref{sec:EFsingularandJordan} the dimension of the collection of all $(E, F)$ is $2p_k$ and it is independent of $s_j$. Since $(E_s, F_s) = (F_{s+p_k, t+p_k}^T, E_{s+p_k, t+p_k}^T)$ from the derivation, it is easily verified that only the first $p_k$ elements of the given basis of the collection of $(E, F)$ create independent elements to $UE_s^TQ^T-PF_sW^T$. Thus we have the dimension of the solution and cosolution both equal to $p_k$. Adding the dimension for all possible choice of Jordan block we have the dimension of the interaction $(\text{Size of all Jordan blocks}) = n -(\text{size of all } \mathcal{L} \text{ blocks}) = n - \sum(2s_l+1)$. Summing for all $j$ we obtain $D_I$.
\end{proof}

In the course of the proof of Theorem \ref{thm:complexTsoldim} we have also identified the linearly dependent elements in $B_T^-(J)$. A reader can obtain a basis of $\cosol(J)$ in the same manner. We briefly state the dimension of the cosolution set as follows.
\begin{corollary}
For a given complex $J$ with the Kronecker structure of $J-\lambda J^T$ in Lemma \ref{lem:complexTstructure}, the complex dimension of the solution of $X^T\!J-JX=0$ is
\begin{equation*}
    D_L + D_Z + D_P + D_I + \underbrace{\sum_{j=1}^c\Big\lceil \frac{m_j}{2}\Big\rceil + \sum_{j<k}\min(m_j, m_k) + \sum_{j=1}^d \Big\lfloor \frac{n_j}{2}\Big\rfloor+ \sum_{j<k}\min(n_j, n_k)}_{\dim(\cent(\pm 1\text{ blocks})) - D_1 - D_{-1}},
\end{equation*}
where $D_L, D_Z, D_P, D_I$ are identical to the ones defined in Theorem \ref{thm:complexTsoldim}.
\end{corollary}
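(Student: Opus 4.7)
The plan is to mirror the argument of Theorem \ref{thm:complexTsoldim} but tracking the dimension contributions of $B_T^-(J)$ (a spanning set of $\cosol(J)$) rather than $B_T^+(J)$. The key observation is that in four of the five cases of that proof the roles of $X_T$ and $Y_T$ are completely symmetric in the linear-dependence analysis, so $\dim\cosol$ agrees with $\dim\sol$ in those cases; the single asymmetry arises in case (c), the $\pm 1$ Jordan blocks, where the floor and ceiling get swapped. Adding everything up gives exactly the formula in the statement, and the displayed decomposition $\dim(\cent(\pm 1\text{ blocks})) - D_1 - D_{-1}$ is then a bookkeeping identity using Corollary \ref{cor:centdimcount}.

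Concretely, first I would revisit cases (a), (b), (d), and (e) of Theorem \ref{thm:complexTsoldim}. In each of those, the upper bound on $\dim\sol$ was obtained by matching to the centralizer (or to $\dim\mathcal{Z}_J$ through Lemma \ref{lem:bijectionZandCS}), and equality was forced by pairing the $X_T$ basis with the corresponding $Y_T$ basis so that the sum of their dimensions saturates. Repeating that paragraph with the roles of $X_T$ and $Y_T$ interchanged is literally the same argument, so the contributions $D_L, D_Z, D_P, D_I$ transfer unchanged to $\cosol(J)$.

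Second, for case (c) I would rerun the parity argument of Theorem \ref{thm:complexTsoldim} using $Y_T$ instead of $X_T$. For $\lambda = 1$, the identity $J^TU_1 = JU_1$ together with the $T$-identity for $K = K_s$ made $X_T(k,J,W_1,U_1)$ linearly dependent on earlier elements for odd $k$; the very same computation shows $Y_T(k,J,W_1,U_1)$ is linearly dependent for even $k$ instead. Hence the cosolution diagonal contribution is $\lceil m_j/2\rceil$ per $\lambda=1$ block, complementary to $\lfloor m_j/2\rfloor$ in the solution. For $\lambda = -1$ the sign in $J^TU_{-1} = -JU_{-1}$ flips the parities, giving cosolution contribution $\lfloor n_j/2\rfloor$. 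The off-diagonal terms $\sum_{j<k}\min(m_j,m_k)$ and $\sum_{j<k}\min(n_j,n_k)$ are unaffected, since the off-diagonal dimension count in the proof of Theorem \ref{thm:complexTsoldim} was symmetric in $X_T$ and $Y_T$ (the pair $\{X_T,Y_T\}$ saturated the centralizer block).

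Third, I would sum up cases (a)--(e) for the cosolution and rearrange to match the displayed expression, observing that
\begin{equation*}
\Bigl(\sum_j m_j + 2\sum_{j<k}\min(m_j,m_k) + \sum_j n_j + 2\sum_{j<k}\min(n_j,n_k)\Bigr) - D_1 - D_{-1}
\end{equation*}
is precisely $\sum_j\lceil m_j/2\rceil + \sum_{j<k}\min(m_j,m_k) + \sum_j\lfloor n_j/2\rfloor + \sum_{j<k}\min(n_j,n_k)$, confirming the labeling of the under-brace via Corollary \ref{cor:centdimcount} applied to the $\pm 1$ Jordan structure of $\cosq(J)$. There is no serious obstacle; the only subtle point is the parity-swap in (c), which follows immediately from the $\pm$ sign of $\lambda$ in $J^TU_\lambda = \lambda^{-1}JU_\lambda$ when $\lambda = \pm 1$.
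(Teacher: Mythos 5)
Your proposal is correct and matches the paper's intended argument, which the paper leaves implicit: the corollary is stated with only the remark that the cosolution basis elements were already identified in the proof of Theorem \ref{thm:complexTsoldim}, and your write-up fills in exactly that bookkeeping — cases (a), (b), (d), (e) are symmetric in $X_T$ and $Y_T$, while case (c) swaps the floor and ceiling for $\lambda=1$ and $\lambda=-1$, and the under-brace identity follows from Corollary \ref{cor:centdimcount}. (One immaterial slip: the leading-order relation is $J^TU_\lambda \approx \lambda\, JU_\lambda$ rather than $\lambda^{-1}JU_\lambda$, but since $\lambda=\pm1$ the two coincide.)
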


\section{Dimension count for complex $X^H\!J+JX=0$}\label{sec:complexHdimcount}
Recall that $\dim(\sol^H(J))$ is equal to $\dim(\cosol^H(J))$ since $\sol(J) = i\cosol(J)$. The dimension of $\sol^H(J)$ (and also $\cosol^H(J)$) is just half of the codimension of the orbit of $J-\lambda J^H$, which could be computed by \cite[Theorem 2.2]{demmel1995dimension}.

On the other hand, the following Theorem \ref{thm:complexHsoldim} computes the dimension (which gives the same result as above) by providing the precise basis set of $\sol^H(J)$. The Kronecker structure of the pencil $J-\lambda J^H$ always has $(\lambda, 1/\bar\lambda)$ pair \cite{schroder2006canonical}. We begin by defining a paired Kronecker block as in Definition \ref{def:auxmatrixcomplexT}. 

\begin{definition}
Define the paired $2n\times 2n$ Kronecker block $J_n^{\lambda, *}$ for a pair $\lambda$, $1/{\bar\lambda}$
\begin{equation*}
    \mathcal{J}_n^{\lambda, *} := J_n^\lambda \oplus J_n^{1/{\bar{\lambda}}} . 
\end{equation*}
\end{definition}

Again modifying Theorem 2.1.(c) of \cite{horn2007canonical}, we obtain the following lemma. 

\begin{lemma}\label{lem:complexHstructure}
The Kronecker structure of the pencil $J-\lambda J^H$ for $J\in\mathbb{C}^{n\times n}$ is
\begin{alignat*}{2}
    K_J = &\,\,\big(\mathcal{L}_{s_1}\oplus \dots \oplus \mathcal{L}_{s_a}\big) \hspace{2.5cm} &&(\text{singular $L$ block pairs})\\
    & \oplus (\mathcal{Z}_{t_1}\oplus \dots \oplus \mathcal{Z}_{t_b}\big) &&(\text{0 and $\infty$ Jordan pairs}) \\
    & \oplus \big(J_{m_1}^{\alpha_1} \oplus \dots \oplus J_{m_c}^{\alpha_c}\big)  && (\text{$|\alpha_j|=1$ Jordan blocks}) \\
    & \oplus \bigoplus_{\substack{j=1 \\ (|\lambda_j|\neq 0, 1, \infty)}}^d \mathcal{J}_{p_j}^{\lambda_j, *} && (\text{$\lambda_j, 1/\bar{\lambda_j}$ Jordan pairs}).
\end{alignat*}
In the following theorem, we will say $\lambda_j\sim \lambda_k$ when $\{\lambda_j, 1/\bar{\lambda}_j\}=\{\lambda_k, 1/\bar{\lambda}_k\}$, i.e., when $\mathcal{J}_{p_j}^{\lambda_j, *}$ and $\mathcal{J}_{p_k}^{\lambda_k, *}$ represent the same Kronecker structure.
\end{lemma}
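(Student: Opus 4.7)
The plan is to obtain this decomposition by invoking the Horn--Sergeichuk canonical form for $*$-congruence (Theorem 2.1.(c) of \cite{horn2007canonical}) and then regrouping the resulting blocks of $J-\lambda J^H$ according to the symmetry $\lambda\mapsto 1/\bar\lambda$. The starting observation is that the pencil is $*$-palindromic: if $F(\lambda)=J-\lambda J^H$, then
\[
F(\lambda)^H \;=\; J^H - \bar\lambda J \;=\; -\bar\lambda\bigl(J - \bar\lambda^{-1} J^H\bigr) \;=\; -\bar\lambda\, F(1/\bar\lambda).
\]
This identity forces the entire Kronecker structure of $F(\lambda)$ to be invariant under the involution $\lambda\mapsto 1/\bar\lambda$. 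Consequently any finite nonzero eigenvalue $\lambda$ with $|\lambda|\neq 1$ must appear together with $1/\bar\lambda$ via Jordan blocks of identical size, while the set $\{0,\infty\}$ and the unit circle $\{|\alpha|=1\}$ are themselves fixed by the involution.

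Next, I would take the canonical form provided by Horn--Sergeichuk and read off its blocks one by one, repackaging them into the four families of Lemma~\ref{lem:complexHstructure}. Concretely: every Jordan block with $|\alpha_j|=1$ already satisfies $\alpha_j=1/\bar{\alpha}_j$, so it stands on its own as $J_{m_j}^{\alpha_j}$; every Jordan block with $|\lambda_j|\notin\{0,1,\infty\}$ is paired with a block of the same size at $1/\bar\lambda_j$, grouped into $\mathcal{J}_{p_j}^{\lambda_j,*}=J_{p_j}^{\lambda_j}\oplus J_{p_j}^{1/\bar\lambda_j}$; a zero-eigenvalue Jordan block of size $t$ must be accompanied by an infinite Jordan block of the same size, giving $\mathcal{Z}_{t_j}=J_{t_j}^{0}\oplus J_{t_j}^{\infty}$; finally the singular blocks appear in $(L_s,L_s^T)$ pairs since left and right minimal indices must match under the $*$-palindromic involution, yielding $\mathcal{L}_{s_j}=L_{s_j}\oplus L_{s_j}^T$.

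The main subtlety is justifying the pairing in full: one must verify that the involution induced at the Kronecker-form level really does preserve \emph{both} the eigenvalue position and the block size, rather than merely the eigenvalue. The cleanest way to do this is to note that under any $*$-congruence $J\mapsto K^H J K$ the pencil $J-\lambda J^H$ transforms to $K^H(J-\lambda J^H)K$, which is strictly equivalent (in the ordinary pencil sense) to $K^H J K -\lambda (K^H J K)^H$; hence two $*$-congruent pencils have the same Kronecker invariants, and one may reduce to Horn--Sergeichuk's canonical list and simply inspect how the involution acts on each entry. The only work is therefore bookkeeping---checking case by case that the canonical $*$-congruence blocks of \cite{horn2007canonical} reassemble exactly into $\mathcal{L}_{s_j}$, $\mathcal{Z}_{t_j}$, $J_{m_j}^{\alpha_j}$, and $\mathcal{J}_{p_j}^{\lambda_j,*}$, with no leftover pieces. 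Because this is essentially a translation of an established classification, the lemma follows without further argument.
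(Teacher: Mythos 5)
Your proposal is correct and takes essentially the same approach as the paper, which proves this lemma simply by citing Theorem 2.1(c) of Horn and Sergeichuk \cite{horn2007canonical} and regrouping the $*$-congruence canonical blocks into the four Kronecker families. Your filling in of the $*$-palindromic identity $F(\lambda)^H=-\bar\lambda\,F(1/\bar\lambda)$ and the block-pairing bookkeeping is exactly the reasoning the paper implicitly delegates to that reference.
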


\begin{boxtheo}\label{thm:complexHsoldim}
Let $J$ be an $n\times n$ complex matrix with the Kronecker structure $K_J$ of the pencil $J-\lambda J^H$ given as Lemma \ref{lem:complexHstructure}. Then, the real dimension of the solution set of $X^H\!J+JX=0$ is the sum of:
\begin{enumerate}[label=(\alph*)]
    \item Dimension $D_L$ from the singular blocks $\mathcal{L}_{s_j}$
    \begin{equation*}
        D_L = \sum_{j=1}^a (2s_j+2) + \sum_{j < k} 2\max(2s_j+1, 2s_k+1) + 2(\#\text{ of }s_j=s_k).
    \end{equation*}
    \item Dimension $D_Z$ from $0$ and $\infty$ block pairs $\mathcal{Z}_{t_j}$
    \begin{equation*}
        D_Z = \sum_{j=1}^b 2t_j + \sum_{j<k} \min(4t_j, 4t_k).
    \end{equation*}
    \item Dimension $D_\alpha$ Jordan blocks $J_{m_j}^{\alpha_j}$ with eigenvalues $|\alpha_j|=1$
    \begin{equation*}
        D_\alpha = \sum_{j=1}^c m_j + \sum_{\alpha_j=\alpha_k} \min(2m_j, 2m_k).
    \end{equation*}
    \item Dimension $D_P$ from all other paired Jordan blocks $\mathcal{J}_{p_j}^{\lambda_j}$
    \begin{equation*}
        D_P = \sum_{j=1}^d 2p_j + \sum_{\lambda_j\sim\lambda_k}\min(4p_j, 4p_k).
    \end{equation*}
    \item Dimension $D_I$ from the interaction of $\mathcal{L}$ blocks and the others
    \begin{equation*}
        D_I = \sum_{j=1}^a \Big(2n - \sum_{k=1}^a (4s_k+2) \Big) = a (2n - \sum_{k=1}^a(4s_k+2)).
    \end{equation*}
\end{enumerate}
\end{boxtheo}
\begin{proof}
The basis elements corresponding to (a), (b), (d), and (e) could be determined similarly as in the proof of Theorem \ref{thm:complexTsoldim}, except that in this case we are computing the real dimensions. The real dimensions are $2$ times the complex dimensions in Theorem \ref{thm:complexTsolcosol}. 
\par \textit{(c) Jordan blocks with eigenvalues $|\alpha_j|=1$}: For eigenvalues $\alpha_j$ such that $|\alpha_j|=1$ we have $\alpha_j = 1/\overline{\alpha}_j$ which means we are in a same situation as in (c) of Theorem \ref{thm:complexTsoldim}. Using the same technique it can be proved that for each $k$, $iY_H(k,J,W_\lambda, U_{\bar\lambda})$ is linearly dependent to $\bigcup_{j=1}^k X_H(k, J, W_\lambda, U_{\bar\lambda})$ and $iX_H(k,J,W_\lambda, U_{\bar\lambda})$ is linearly dependent to $\bigcup_{j=1}^k Y_H(k, J, W_\lambda, U_{\bar\lambda})$. With a similar argument used in the proof of Theorem \ref{thm:complexTsoldim}, by examining the maximum sum of the dimensions, we deduce that the sets $\bigcup_{j=1}^k X_H(k, J, W_\lambda, U_{\bar\lambda})$ and $\bigcup_{j=1}^k Y_H(k, J, W_\lambda, U_{\bar\lambda})$ for all Jordan blocks (and their interactions) are linearly independent bases of the solution set and the cosolution set. 
\end{proof}
\begin{corollary}
For a given complex $J$ with the Kronecker structure of $J-\lambda J^H$ given as in Theorem \ref{thm:complexHsoldim}, the real dimension of the solution to $X^H\!J-JX=0$, i.e., $\dim(\cosol^H(J))$, is the same as $\dim(\sol^H(J))$ described in Theorem \ref{thm:complexHsoldim}.
\end{corollary}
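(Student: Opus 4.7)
The plan is to produce a trivial one-line proof by exhibiting an explicit real-linear isomorphism $\sol^H(J) \xrightarrow{\sim} \cosol^H(J)$, which forces the real dimensions to agree. This is in fact the argument the authors already hint at in the paragraph preceding Theorem \ref{thm:complexHsoldim}, where they remark that $\sol^H(J) = i\cdot\cosol^H(J)$.

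Concretely, I would first verify the map. If $X \in \sol^H(J)$, so that $X^H\!J + JX = 0$, then setting $Y := iX$ and using $(iX)^H = -iX^H$ gives
\begin{equation*}
    Y^H\!J - JY \;=\; -iX^H\!J - iJX \;=\; -i\bigl(X^H\!J + JX\bigr) \;=\; 0,
\end{equation*}
so $Y \in \cosol^H(J)$. The map $Y \mapsto -iY$ reverses this, and both maps are additive and $\mathbb{R}$-linear. Hence multiplication by $i$ is a real-linear bijection between the two spaces, and they share the same real dimension.

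The only point worth emphasizing is conceptual rather than technical: the map $X \mapsto iX$ is $\mathbb{R}$-linear but not $\mathbb{C}$-linear, which is perfectly consistent with Remark \ref{rem:solHreal}, where the authors note that $\sol^H(J)$ and $\cosol^H(J)$ are only real vector spaces. As a sanity check, one can verify the correspondence at the level of the bases constructed in Theorem \ref{thm:complexHsolcosol}: multiplying each generator in $B_H^+(J)$ by $i$ sends $X_H(k,J,W,U) \mapsto iX_H(k,J,W,U)$ and $iY_H(k,J,W,U) \mapsto -Y_H(k,J,W,U)$, and the resulting set has the same $\mathbb{R}$-span as $B_H^-(J)$.

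There is no real obstacle here; the entire content of the corollary is already encoded in the involution $X\mapsto iX$, and the explicit formula for $\dim(\sol^H(J))$ from Theorem \ref{thm:complexHsoldim} transfers verbatim to $\dim(\cosol^H(J))$.
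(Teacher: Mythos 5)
Your proof is correct and matches the paper's own reasoning: the authors state at the start of Section \ref{sec:complexHdimcount} that $\dim(\sol^H(J))=\dim(\cosol^H(J))$ precisely because multiplication by $i$ gives the real-linear identification $\sol^H(J)=i\cosol^H(J)$ (the paper has a small typo there, omitting the $H$ superscripts). You have simply made that one-line observation explicit, and your cross-check against the basis sets $B_H^+(J)$ and $B_H^-(J)$ from Theorem \ref{thm:complexHsolcosol} is consistent with the paper's construction.
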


\section{Dimension count for real $X^T\!J+JX=0$}\label{sec:realdimcount}
The dimension and a basis of the real solution of $X^T\!J+JX=0$ when $J$ is real is determined. We define another canonical block that appears in the Kronecker structure of real $J-\lambda J^T$ as follows. 
\begin{definition}
For $\lambda \in \mathbb{C}\backslash\mathbb{R}$, define a block diagonal matrix,
\begin{equation}
    \mathcal{J}_n^{\lambda, \mathbb{R}} := \realify{J_n^\lambda} \oplus \realify{J_n^{1/\lambda}} \hspace{0.5cm}\in\mathbb{R}^{4n\times 4n}.
\end{equation}
The matrix $\mathcal{J}_n^{\lambda, \mathbb{R}}$ represents the four Jordan blocks $J_n^\lambda$, $J_n^{\bar\lambda}$, $J_n^{1/\lambda}$, $J_n^{1/\bar{\lambda}}$ of the Kronecker structure at once.  
\end{definition}
From Theorem 2.1.(d) of \cite{horn2007canonical} we obtain the following lemma for the real case. 
\begin{lemma}\label{lem:realstructure}
Let $\Omega$ be the union of $\mathbb{R}$ and the complex unit circle minus the points $\{-1, 0, 1\}$. The Kronecker structure of the real pencil $J-\lambda J^T$ for $J\in\mathbb{R}^{n\times n}$ is
\begin{alignat*}{2}
    K_J &= \,\,\big(\mathcal{L}_{s_1}\oplus \dots \oplus \mathcal{L}_{s_a}\big) \hspace{3.5cm} &&(\text{singular $L$ block pairs})\\
    & \oplus (\mathcal{Z}_{t_1}\oplus \dots \oplus \mathcal{Z}_{t_b}\big) &&(\text{0 and $\infty$ Jordan pairs}) \\
    & \oplus \big(J_{m_1}^1 \oplus \dots \oplus J_{m_c}^1\big) \oplus \big(J_{n_1}^{-1} \oplus \dots \oplus J_{n_d}^{-1}\big) && (\text{$\pm$ 1 Jordan blocks}) \\
    & \oplus \big(\mathcal{J}_{p_1}^{\alpha_1}\oplus\dots\oplus\mathcal{J}_{p_e}^{\alpha_e}\big) && (\text{$\alpha_j,\alpha_j^{-1}\in\Omega$ Jordan pairs}) \\
    & \oplus \bigoplus_{\substack{j=1 \\ (\lambda_j\in\mathbb{C}\backslash\mathbb{R})}}^f \mathcal{J}_{q_j}^{\lambda_j, \mathbb{R}} && (\text{$\lambda_j$, $\bar\lambda_j$, $\lambda_j^{-1}$, ${\bar\lambda_j}^{-1}$ blocks}).
\end{alignat*}
In the following theorem, we will say $\alpha_j\sim\alpha_j$ (resp. $\lambda_j\sim\lambda_k$) when $\{\alpha_j, \frac{1}{\alpha_j}\}=\{\alpha_k, \frac{1}{\alpha_k}\}$ (resp. $\{\lambda_j, \bar\lambda_j, \frac{1}{\lambda_j}, \frac{1}{\overline\lambda_j}\}=\{\lambda_k, \bar\lambda_k, \frac{1}{\lambda_k}, \frac{1}{\overline\lambda_k}\}$), i.e., when $J_{p_j}^{\alpha_j}$ and $J_{p_k}^{\alpha_k}$ (resp. $\mathcal{J}_{q_j}^{\lambda_j, \mathbb{R}}$ and $\mathcal{J}_{q_j}^{\lambda_j, \mathbb{R}}$ ) represent the same Kronecker structure. 
\end{lemma}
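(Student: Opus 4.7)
The plan is to pass from the complex Kronecker canonical form of $J-\lambda J^T$ (Lemma \ref{lem:complexTstructure}) to its real form by exploiting the reality of $J$. Since $J\in\mathbb{R}^{n\times n}$, the pencil $J-\lambda J^T$ has real coefficients, and hence its complex Kronecker structure is invariant under complex conjugation of the eigenvalues. Combined with the $T$-palindromic symmetry $\lambda\mapsto 1/\lambda$ already built into Lemma \ref{lem:complexTstructure}, this forces each eigenvalue to appear together with every element of its orbit under the group $\langle \lambda\mapsto\bar\lambda,\;\lambda\mapsto 1/\lambda\rangle$, with matching Jordan block sizes.

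First, I would separate the complex Kronecker blocks of $J-\lambda J^T$ according to their eigenvalue orbits under this combined action. The singular $L$-pairs $\mathcal{L}_{s_j}$ and the $(0,\infty)$ Jordan pairs $\mathcal{Z}_{t_j}$ correspond to real congruence canonical blocks, so they descend to $\mathbb{R}$ unchanged. For the remaining finite nonzero eigenvalues there are exactly four orbit types: the fixed points $\lambda=\pm 1$ give individual real Jordan blocks $J_{m_j}^1$ and $J_{n_j}^{-1}$; the orbits $\{\lambda,1/\lambda\}$ with $\lambda\in\mathbb{R}\setminus\{0,\pm 1\}$ yield real paired blocks $\mathcal{J}_{p_j}^{\alpha_j}$; the orbits $\{\alpha,\bar\alpha\}$ on the unit circle with $\alpha\neq\pm 1$ satisfy $\bar\alpha=1/\alpha$, so combining the conjugate Jordan blocks via $\realify{\cdot}$ again produces an $\mathcal{J}_{p_j}^{\alpha_j}$-type block; and finally the generic orbits $\{\lambda,\bar\lambda,1/\lambda,1/\bar\lambda\}$ with $\lambda\in\mathbb{C}\setminus\mathbb{R}$ and $|\lambda|\neq 1$ consist of four distinct eigenvalues, whose merger under realification yields precisely the quadruple block $\mathcal{J}_{q_j}^{\lambda_j,\mathbb{R}}=\realify{J_{q_j}^{\lambda_j}}\oplus\realify{J_{q_j}^{1/\lambda_j}}$.

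Second, I would verify that the block sizes within each orbit are forced to coincide: the sizes at $\lambda$ and $\bar\lambda$ match by reality of the pencil, while the sizes at $\lambda$ and $1/\lambda$ match by the $T$-palindromic symmetry established in Lemma \ref{lem:complexTstructure}. So all four eigenvalues in a generic orbit share a single list of block sizes $q_j$, justifying the use of the single realified block $\mathcal{J}_{q_j}^{\lambda_j,\mathbb{R}}$. Uniqueness of the Kronecker canonical form then makes the resulting direct-sum decomposition unique up to the obvious reorderings, yielding the asserted list.

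The main obstacle is the bookkeeping translating between the congruence canonical form of $J$ used by Horn and Sergeichuk (Theorem 2.1.(d) of \cite{horn2007canonical}) and the Kronecker canonical form of the palindromic pencil $J-\lambda J^T$; in particular, one must check that the blocks called ``type (\rn{2})'', ``type (\rn{2}\pp)'', ``type (\rn{3}\pp)'', etc.\ in the congruence classification correspond precisely to the Kronecker blocks $\mathcal{J}_{p_j}^{\alpha_j}$ and $\mathcal{J}_{q_j}^{\lambda_j,\mathbb{R}}$ used here. Once this dictionary is in place, the enumeration of orbit types is routine, and the lemma follows directly from the cited classification.
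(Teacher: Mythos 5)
Your proof is correct but takes a genuinely different route from the paper. The paper simply cites Theorem 2.1.(d) of Horn and Sergeichuk, which gives the \emph{real} congruence canonical form, and reads off the corresponding Kronecker blocks of $J - \lambda J^T$. You instead take the complex Kronecker structure from Lemma~\ref{lem:complexTstructure} as given and impose the additional constraint that, because $J$ is real, the pencil $J - \lambda J^T$ has real coefficients, so its Kronecker structure is invariant under eigenvalue conjugation; combining this with the built-in $T$-palindromic symmetry $\lambda \mapsto 1/\lambda$, you enumerate the orbits of the Klein four-group $\langle \lambda\mapsto\bar\lambda, \lambda\mapsto 1/\lambda\rangle$ on $\mathbb{C}\setminus\{0\}$ and match each orbit type (fixed points $\pm 1$; size-two orbits $\{\lambda,1/\lambda\}$ with $\lambda$ real or $\{\alpha,\bar\alpha\}$ with $|\alpha|=1$; generic size-four orbits) to the blocks $J_{m}^{\pm 1}$, $\mathcal{J}_{p}^{\alpha}$, and $\mathcal{J}_{q}^{\lambda,\mathbb{R}}$ respectively. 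This derivation is more self-contained in that it reuses the already-established complex lemma plus an elementary reality argument, at the cost of the orbit bookkeeping; the paper's approach is shorter but entirely delegates the classification to the external reference and the earlier-established dictionary between congruence canonical blocks and Kronecker blocks. Your closing remark about the dictionary between Horn--Sergeichuk's block types and the Kronecker blocks concerns that second, citation-based route rather than your own derivation, which sidesteps it by building on Lemma~\ref{lem:complexTstructure}; this is a valid observation but worth separating from the proof itself so as not to suggest a gap where there is none.
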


\begin{boxtheo}\label{thm:realsoldim}
Let $J$ be an $n\times n$ real matrix with Kronecker structure $K_J$ of the pencil $J-\lambda J^T$ given as Lemma \ref{lem:realstructure}. Then, the real dimension of the real solution set of $X^T\!J+JX=0$ is the sum of:
\begin{enumerate}[label=(\alph*)]
    \item Dimension $D_L$ from the singular blocks $\mathcal{L}_{s_j}$
    \begin{equation*}
        D_L = \sum_{j=1}^a (s_j+1) + \sum_{j < k} \max(2s_j+1, 2s_k+1) +(\#\text{ of }s_j=s_k).
    \end{equation*}
    \item Dimension $D_Z$ from $0$ and $\infty$ block pairs $\mathcal{Z}_{t_j}$
    \begin{equation*}
        D_Z = \sum_{j=1}^b t_j + \sum_{j<k} \min(2t_j, 2t_k).
    \end{equation*}
    \item Dimensions $D_{1}$ and $D_{-1}$ from $\pm 1$ Jordan blocks
    \begin{equation*}
        D_1 = \sum_{j=1}^c \Big\lfloor \frac{m_j}{2}\Big\rfloor + \sum_{j<k} \min(m_j, m_k), \hspace{0.3cm} D_{-1} = \sum_{j=1}^d \Big\lceil \frac{n_j}{2}\Big\rceil + \sum_{j<k} \min(n_j, n_k).
    \end{equation*}
    \item Dimension $D_\alpha$ from Jordan block pairs $\mathcal{J}_{p_j}^{\alpha_j}$
    \begin{equation*}
        D_\alpha = \sum_{j=1}^e p_j + \sum_{\alpha_j\sim\alpha_k} \min(2p_j, 2p_k).
    \end{equation*}
    \item Dimension $D_P$ from blocks $\mathcal{J}_{q_j}^{\lambda_j, \mathbb{R}}$ with all other $\lambda_j$, $\bar\lambda_j$, $\lambda_j^{-1}$, ${\bar\lambda_j}^{-1}$
    \begin{equation*}
        D_P = \sum_{j=1}^f 2q_j + \sum_{\lambda_j\sim\lambda_k}\min(4q_j, 4q_k).
    \end{equation*}
    \item Dimension $D_I$ from the interaction of $\mathcal{L}$ blocks and the rest
    \begin{equation*}
        D_I = a (n - \sum_{i=1}^a s_i ).
    \end{equation*}
\end{enumerate}
\end{boxtheo}
\begin{proof}
The basis elements corresponding to (a), (b), (c) and (f) are computed identically as in the proof of Theorem \ref{thm:complexTsoldim}. (The proofs of (a), (b), (c), (f) in Theorem \ref{thm:complexTsoldim} does not assume a complex $J$.) 
\par \textit{(d) Jordan block pairs $\mathcal{J}_{p_j}^{\alpha_j}$ with $\alpha_j\in\Omega$}: If $\alpha_j\in\mathbb{R}$, the corresponding basis elements are determined as in (d) of Theorem \ref{thm:complexTsoldim}. If $\alpha_j$ is on the unit circle, the basis elements are determined by applying Proposition \ref{thm:XstlambdaXtslambdai} (deleting $X_\mathbb{R}$ and $Y_\mathbb{R}$ matrices corresponding to $\frac{1}{\alpha_j}$) and using the maximum dimension argument. 
\par \textit{(e) Jordan blocks of $\lambda_j$, $\bar\lambda_j$, $\lambda_j^{-1}$, ${\bar\lambda_j}^{-1}$}: Proposition \ref{thm:XstlambdaXtslambdai} is used to delete the elements corresponding to the eigenvalues $\bar\lambda_j$ and $\bar\lambda_j^{-1}$. 
\end{proof}

\section{Computing the solutions numerically and generating plots}\label{sec:numerical}

In this section we discuss numerical applications related to the group $\{G:G^*\!JG = J\}$ and its tangent space $\{X:X^*\!J+JX = 0\}$. 

\subsection{Sampling random matrices from $\{G:G^*\!JG = J\}$ for a given $J$}
Although the main focus of Sections \ref{sec:complexTdimcount} to \ref{sec:realdimcount} is the tangent space $\sol(J)$ (and $\sol^H(J)$) of the group $G_J$, the computed basis of $\sol(J)$ could be used to sample and plot the identity component of $G_J$ by the exponential map. The surjectivity of the exponential map is addressed in Section \ref{sec:exp}. The following simple algorithm is one way to sample $N$ random elements from the identity component of the group $\{G:G^*\!JG = J\}$. 

\begin{algorithm}
\caption{Sampling $N$ random points from $\{G:G^*\!JG = J\}$}
\begin{algorithmic}
\Require Square matrix $J$, number of samples $N$
\State $\mathcal{G}\gets$ Empty vector of $N$ matrices
\State $\mathcal{S} \gets$ Vector of basis elements of $\sol(J)$ obtained from Theorems \ref{thm:complexTsoldim}, \ref{thm:complexHsoldim}, or \ref{thm:realsoldim}
\State $m\gets \text{size}(\mathcal{S})$
\For{$i \leq N$} 
    \State $r \gets $ Length $m$ random vector
    \State $\mathcal{G}[i] \gets \exp(\sum_j r[j]\cdot \mathcal{S}[j])$
\EndFor
\Ensure $\mathcal{G}$
\end{algorithmic}
\label{alg:randompoints}
\end{algorithm}

\par In the case of $G_J^H$, Algorithm \ref{alg:randompoints} samples the whole Lie group, and for $G_J$ and $G_J^\mathbb{R}$ it samples all elements except for the matrices that have real negative eigenvalues, as discussed in Section \ref{sec:exp}.

\par A possible application of Algorithm \ref{alg:randompoints} is sampling test points for structured matrix computations \cite{benner1998numerically,bunse1992chart,fassbender2007symplectic}. 

\subsection{Plotting 3D projections of the group $\{G:G^*\!JG = J\}$}

Using Algorithm \ref{alg:randompoints} one can sample points of $\{G:G^*\!JG = J\}$ but generally they lie inside higher dimensional manifolds which cannot be visualized directly. Algorithm \ref{alg:3dprojection} is one obvious way to visualize such a manifold (in particular the group $G_J^\mathbb{R}$) using a random three dimensional projection. Plotting functions such as \verb|scatter| are useful for creating the projected images. 

\begin{algorithm}
\caption{Plotting a 3D projection of $G_J^\mathbb{R}$}
\begin{algorithmic}
\Require $n\times n$ real matrix $J$, number of samples $N$
\State $\mathcal{G}\gets N\text{ random points of }\{G:G^T\!JG=J\}$ from Algorithm \ref{alg:randompoints}
\State $X \gets $Empty $N\times 3$ matrix
\State $Q\gets n^2\times 3$ ``tall skinny" orthogonal matrix 
\For{$i \leq N$} 
    \State $X[i,:]\gets \verb|vec|(\mathcal{G}[i])^T\cdot Q$
\EndFor
\State $\verb|scatter|(X[1,:], X[2,:], X[3,:])$
\end{algorithmic}
\label{alg:3dprojection}
\end{algorithm}

Figure \ref{fig:rand3Dscatter} provides some examples of randomly projected three dimensional scatter plots. Figure \ref{fig:rand3Dscatter} is two sets of 50000 randomly sampled points of the group $G_J^\mathbb{R}$ for $J\in\mathbb{R}^{8\times 8}$, scattered in $\mathbb{R}^3$ using the programming language \verb|Julia|. 

\begin{figure}[ht]
    \centering
    \includegraphics[width=4.9in]{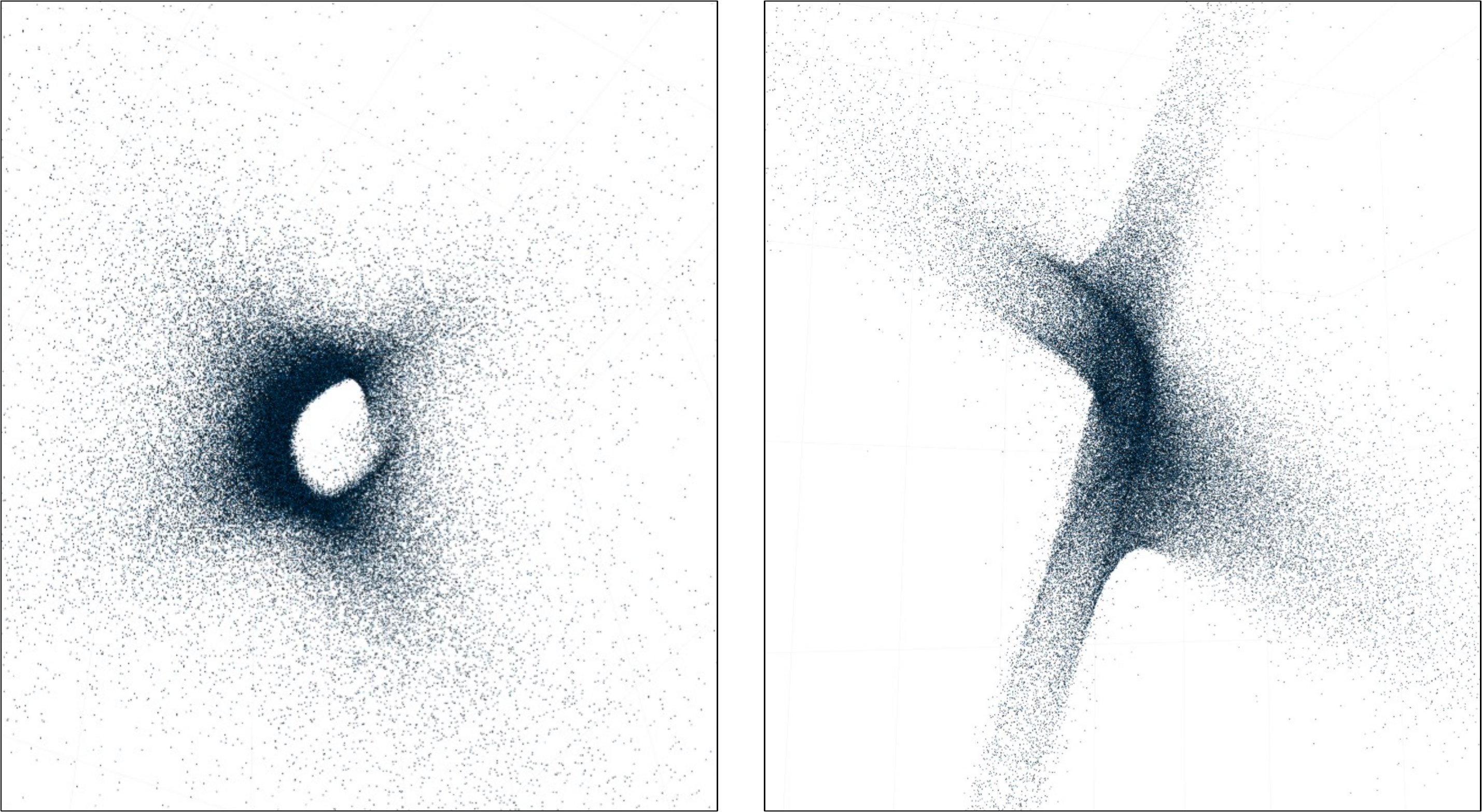}
    \caption{Scatter plots of the automorphism groups of two randomly selected $J\in\mathbb{R}^{8\times 8}$, created with the programming language Julia \cite{bezanson2017julia}. Each plot is 50000 sample points from the four dimensional manifold $G_J^\mathbb{R}$ projected onto the column space of a randomly selected $Q$ in Algorithm \ref{alg:3dprojection} and scattered in $\mathbb{R}^3$.}
    \label{fig:rand3Dscatter}
\end{figure}

\par Additionally if the group is a two dimensional surface so that the basis set $\mathcal{S}$ has two matrices, one can use the plotting function \verb|surface| instead of \verb|scatter|, by substituting $r[1], r[2]$ values in Algorithm \ref{alg:randompoints} by grid vertices. Examples of the visualization created from the plotting function \verb|surface| are used in Figure \ref{fig:4x4plots} (using the package \verb|Makie.jl| \cite{DanischKrumbiegel2021}) where the plotting function \verb|scatter| creates visualizations as the following Figure \ref{fig:rand2Dscatter}.

\begin{figure}[ht]
    \centering
    \includegraphics[width=4.9in]{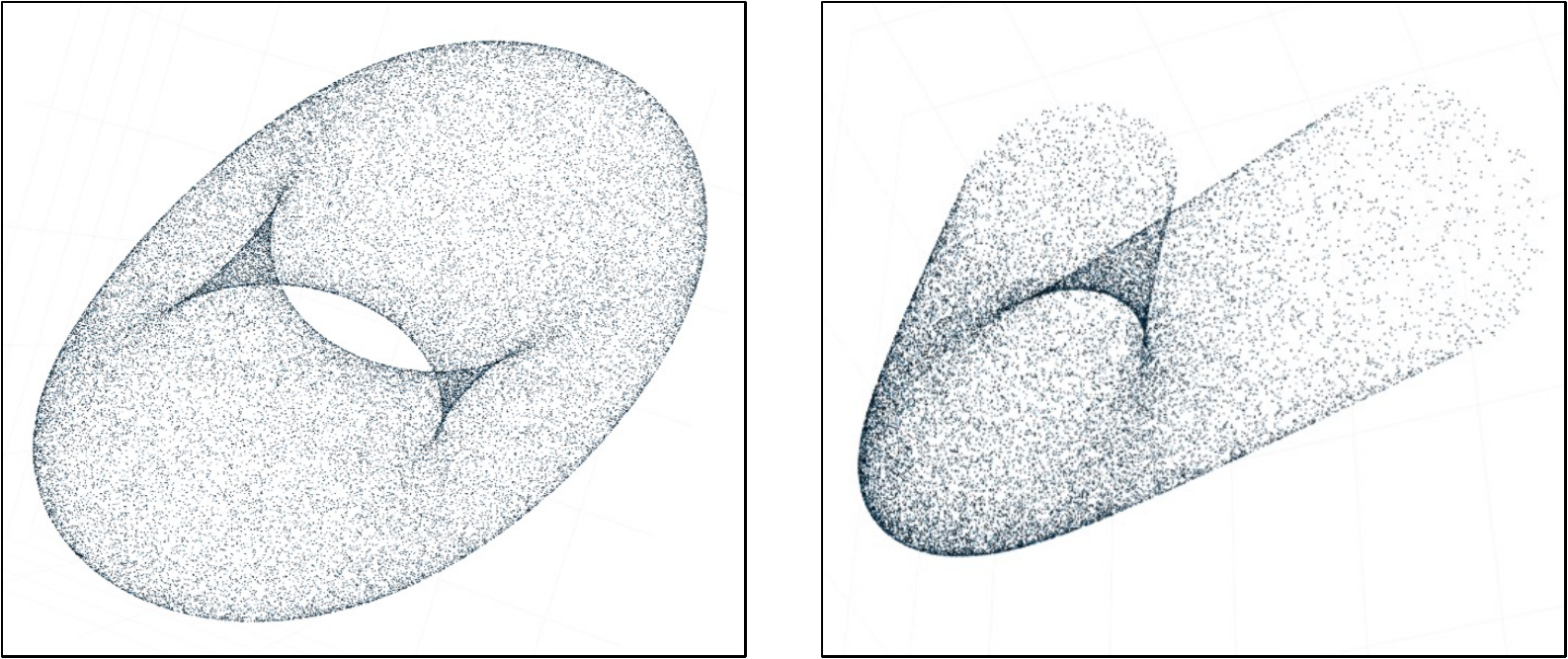}
    \caption{Scatter plot visualizations of two dimensional surfaces. Left: random points from the Bohemian dome (top left of Figure \ref{fig:4x4plots}) are plotted. Right: direct product of a circle and a hyperbola (bottom left surface of Figure \ref{fig:4x4plots}).}
    \label{fig:rand2Dscatter}
\end{figure}

\subsection{Numerical implementation of the computation}
It is well known that computing Jordan chain matrices of a given matrix is not a numerically stable procedure. Therefore, the above derivation of bases of $\sol(J)$ and $\cosol(J)$ is often unstable since it depends on a computation of the Jordan chain matrices. For a stable implementation, the staircase format \cite{kublanovskaya1966method} is preferred for revealing the eigenstructure. 

\par For a given $J$, let $J^{-T}\!J = WDW^{-1}$ and $J^{T}J^{-1} = PDP^{-1}$ be the Jordan decompositions of $\cosq(J)$ and $\cosq(J)^T$. We have $J^{-T}\!JW = WD$ and $J^TJ^{-1}P = PD$ where $D$ is the Jordan canonical form of $J^{-T}\!J$. A basis of $\cent(\cosq(J))$ discussed in Section \ref{sec:centralizer} can also be derived by the following. Let $WEP^T$ be an ansatz of a basis. Then $J^{-T}\!JWEP^T = WEP^TJ^{-T}\!J$ becomes $WDEP^T = WED^TP^T$ and $E$ has to satisfy $DE = ED^T$. When $D$ is a Jordan form, a basis of matrices $E$ satisfying $DE = ED^T$ is the set of block matrices each having a single block with its upper left corner having the backwards identity (the collection of $E_j^{s, t}$ matrices). This block nature of the $E$ matrices corresponding to the Jordan form $D$ allows us to isolate the corresponding columns of $W$ and $P$ matrices. (Lemma \ref{lem:basiscent}.)

\par Instead of the Jordan canonical form, let us consider the case where we use a numerically stable staircase form. We obtain two (unitary) matrices $W, P$ such that $J^{-T}\!JW = WT_W$, $J^TJ^{-1}P = PT_P$ where $T_W$ and $T_P$ are staircase forms. What is left to us is the similar equation $T_WE = ET_P^T$, which is a Sylvester equation. (See \cite[Chapter 16]{higham2002accuracy} for a thorough discussion on the Sylvester's equation.) To solve a Sylvester equation, one can use various software implementations most of which are based on the Bartels-Stewart algorithm \cite{bartels1972solution} and its variants. 

\par For a similar reason, the Kronecker canonical form of $J-\lambda J^T$ is also not preferred. Instead, the generalized Schur staircase format computed by the GUPTRI algorithm \cite{demmel1993generalized1,demmel1993generalized2} (or any other preferred algorithm to obtain a stable staircase format, e.g., the generalized Brunovsky canonical form \cite{morse1973structural,thorp1973singular} or the Kronecker-like form \cite{varga2020computing}) is preferred for computing the Kronecker structure of a given pencil. For a given $J$ the staircase formats of the two pencils $J-\lambda J^T$ and $J^T -\lambda J$ are $Q^H(J-\lambda J^T)W = K_W(\lambda)$ and $P^H(J-\lambda J^T)U = K_U(\lambda)$ with all $W, U, P, Q$ being unitary matrices. With two staircase forms $K_W(\lambda)$ and $K_U(\lambda)$ one needs to find a solution for $E\cdot K_W(\lambda)^T - K_U(\lambda)F=0$, and construct $(Z_1, Z_2) = (QEU^T, WFP^T)$ which satisfies $Z_1(J-\lambda J^T) - (J-\lambda J^T) Z_2 = 0$. Then, we can again collect all $Z_1^T - Z_2$ to obtain the solution set. 

\section{Acknowledgements}
We thank David Vogan and Pavel Etingof for helpful discussions. We thank NSF grants OAC-1835443, OAC-2103804, SII-2029670, ECCS-2029670, PHY-2021825 for financial support.

\appendix

\section{Canonical blocks $(E, F)$}\label{sec:EFappendix}
In this section we introduce a basis of the collection of pairs $(E, F)$ (discussed in Section \ref{sec:singularJbackground}) for each canonical block of the Kronecker structure. For a single Kronecker canonical block $K(\lambda)$ we provide a basis of the pair $(E, F)$ such that $E\cdot K(\lambda)^T - K(\lambda)\cdot F=0$. For two canonical blocks (i.e., interaction) $K_1(\lambda)$ and $K_2(\lambda)$, we provide a basis of the pair $(E, F)$ such that 
\begin{equation*}
    E \cdot K_1(\lambda)^T - K_2(\lambda)\cdot F = 0 \hspace{0.3cm}\text{ or }\hspace{0.3cm}E\cdot K_2(\lambda)^T - K_1(\lambda)\cdot F = 0.
\end{equation*}

The Kronecker structure of $J-\lambda J^T$ is consisted of singular pair blocks $\mathcal{L}$ (see Definition \ref{def:auxmatrixcomplexT}) and Jordan structures. We will use a special canonical form for $\mathcal{L}$ which appears in \cite{schroder2006canonical}. For an odd $n = 2n'+1$, define an $n\times n$ matrix pencil
\begin{equation*}
    \mathcal{S}_n := \begin{bmatrix} 0 & \tilde{L}_{n'} \\L_{n'}^T  & 0 \end{bmatrix},
\end{equation*}
where $\tilde{L}_m$ is the $(m+1)\times m$ matrix pencil with ones on the diagonal and $(-\lambda)$ on the subdiagonal. The canonical block $\mathcal{S}_n$ is equivalent to the pencil $L_{n'}\oplus L_{n'}^T$. 

\par In this section the matrices $E_k^{m, n}, E_n$ in Definition \ref{def:backwardsid} are denoted by $B_k^{m, n}, B_n$ to avoid confusion. 

\subsection{$K(\lambda) = \mathcal{S}_n$}\label{sec:EFsingular1}
Let $K(\lambda) = \mathcal{S}_n$ with $n = 2n'+1$. Then a basis of the collection of $(E, F)\in(\mathbb{R}^{n\times n}, \mathbb{R}^{n\times n})$ (dimension $n+1$) is the following:
\begin{equation}\label{eq:EFsingular1}
    (E_j,F_j) = \left. \begin{cases}\Bigg(\begin{bmatrix} B_{n'+1} & \\ & 0\end{bmatrix}, \begin{bmatrix} 0 &  \\  & B_{n'}\end{bmatrix}\Bigg),\hspace{1cm} j = 1 \vspace{.2cm}\\
    \Bigg(\begin{bmatrix} 0 & 0 \\ B_{j-1}^{n', n'+1} & 0 \end{bmatrix}, \begin{bmatrix} 0& B_{j-1}^{n'+1, n'} \\ 0 & 0 \end{bmatrix}\Bigg), \hspace{0.2cm} j=2, \hdots, n \vspace{0.2cm}\\
    (F_1, E_1), \hspace{4cm} j = n+1.
    \end{cases}\right.
\end{equation}

\subsection{$K_1(\lambda) = \mathcal{S}_m, K_2(\lambda) = \mathcal{S}_n$}\label{sec:EFsingular2}
Let two canonical structures be $K_1(\lambda) = \mathcal{S}_n$ and $K_2(\lambda) = \mathcal{S}_m$, where $m>n$. Let $m', n' = \frac{m-1}{2}, \frac{n-1}{2}$. Then a basis of the collection of $(E, F)$ satisfying $E \mathcal{S}_n^T - \mathcal{S}_m F = 0$ is the following with the dimension $m$:
\begin{equation*}
    (E_j,F_j) = \left. \begin{cases}\Bigg(\begin{bmatrix} B_{n'+j}^{m'+1, n'+1} & \\ & 0\end{bmatrix}, \begin{bmatrix} 0 & \\ & B_{n'+j-1}^{m', n'}\end{bmatrix}\Bigg),\hspace{1cm} j = 1, \hdots, \frac{m-n}{2}+1 \vspace{0.2cm}\\
    \Bigg(\begin{bmatrix} 0 & 0 \\ B_{j-(m'-n'+1)}^{m', n'+1} & 0 \end{bmatrix}, \begin{bmatrix} 0& B_{j-(m'-n'+1)}^{m'+1, n'} \\ 0 & 0 \end{bmatrix}\Bigg), \hspace{0.2cm} j=\frac{m-n}{2}+2, \hdots, m.
    \end{cases}\right.
\end{equation*}
For $E \mathcal{S}_m^T - \mathcal{S}_n F = 0$ we have a dimension $m$ basis set for $(E, F)$:
\begin{equation*}
    (E_j, F_j) = (F_{j-m}^T, E_{j-m}^T),\hspace{1cm}j = m+1, \dots, 2m.
\end{equation*}
The total dimension is $2m$. For $m=n$ we add $(F_1, E_1)$ as $(E_{m+1}, F_{m+1})$ to obtain the total dimension $2m+2$. 

\subsection{$\mathcal{S}_n$ and a Jordan block}\label{sec:EFsingularandJordan}
Let two canonical structures be $K_1(\lambda) = \mathcal{S}_n$ and $K_2(\lambda) = J^\alpha_m - \lambda I_m$ where the Jordan block has the eigenvalue $\alpha$. Again let $n = 2n'+1$. First define a matrix $G_m^\alpha$,
\begin{equation*}
    [G_m^\alpha]_{jk} = \binom{j}{k+j-m}\alpha^{k+j-m}.
\end{equation*}
For example, if $m = 3$, then 
\begin{equation*}
    G_3^\alpha = \begin{bmatrix}0 & 0 & 1\\ 0 & 1 & \alpha \\ 1 & 2\alpha & \alpha^2\end{bmatrix}.
\end{equation*}
Then a basis of the collection of $(E, F)\in(\mathbb{R}^{n\times m}, \mathbb{R}^{n\times m})$ (dimension $m$) can be described as the following. For a fixed $j \leq m$, $E_j$ is the matrix with its $(j-k+1)^{th}$ column having its bottom $n'$ entries equal to the $(n'-k+1)^{th}$ column of $G_{n'}^\alpha$, for $k = 1, \dots, n'$. (Ignore negative indexed columns and just discard them.) Also, $F_j$ is the matrix with its $(j-k+1)^{th}$ column having its top $n'+1$ entries equal to the $(n'-k+2)^{th}$ column of $G_{n'+1}^\alpha$ for $k = 1, \dots, n'+1$. For example when $m=7, n=5$, we have 
\begin{equation*}
    E_2, F_2 = \begin{bmatrix} 0 & 0 & 0 & 0 & 0 & 0 & 0\\
    0 & 0 & 0 & 0 & 0 & 0 & 0 \\
     0 & 0 & 0 & 0 & 0 & 0 & 0\\
      0 & 1 & 0 & 0 & 0 & 0 & 0\\
       1 & \alpha & 0 & 0 & 0 & 0 & 0\\ \end{bmatrix}, 
       \begin{bmatrix} 0 & 1 & 0 & 0 & 0 & 0 & 0 \\
     1 & \alpha & 0 & 0 & 0 & 0 & 0\\
      2\alpha & \alpha^2 & 0 & 0 & 0 & 0 & 0\\
      0 & 0 & 0 & 0 & 0 & 0 & 0\\
       0 & 0 & 0 & 0 & 0 & 0 & 0\\ \end{bmatrix}.
\end{equation*}
Also for $E K_2(\lambda)^T - K_1(\lambda) F = 0$ we have 
\begin{equation*}
     (E_j, F_j) = (F_{j-m}^T, E_{j-m}^T),\hspace{1cm}j = m+1, \dots, 2m.
\end{equation*}
The total dimension is $2m$.

\section{Proofs of Propositions \ref{thm:choiceofWU} and \ref{thm:XstlambdaXtslambdai}}\label{sec:proofs}

\subsection{Proof of Proposition \ref{thm:choiceofWU}}
\begin{proof}
We prove the result for $X_T$ with two different $W$'s. The other cases are similar. Let $W$ and $W'$ be two $n\times r$ Jordan chain matrices of $C$. Realizing Jordan chain as a chain of basis elements of the nullspaces $\text{null}(C-\lambda I)^j$ there exists an invertible upper triangular $T$ such that $WT = W'$. From $CW' = W'J_r^\lambda$ we have $CWT = WTJ_r^\lambda = WJ_r^\lambda T$. Since $W$ is of full rank we deduce $J_r^\lambda T = TJ_r^\lambda$, which makes $T$ a Toeplitz matrix. ($\cent(J_r^\lambda)$ is the set of upper triangular Toeplitz matrices.) The $r\times r$ upper triangular Toeplitz matrix $T$ can be expressed as $T = \sum t_j E_j^{r, r}E_r^{r, r}$.
\par Now let us consider two matrices $X_T(k, J, W, U)$ and $X_T(k, J, W', U)$ where $U$ is an $n\times m$ Jordan chain matrix of $C^{-1}$. We have
\begin{align*}
    X_T(k, J, W', U) &= WTE_k^{r, m}U^TJ^T - UE_k^{m, r}T^TW^TJ \\
    &= \sum_{j=1}^k t_{r+j-k} X_T(j, J, W, U),
\end{align*}
which proves the sets $\bigcup_{j=1}^k\big\{X_T(j, J, W', U)\big\}$ and $\bigcup_{j=1}^k\big\{X_T(j, J, W, U)\big\}$ span the same vector space. 
\end{proof}

\subsection{Proof of Proposition \ref{thm:XstlambdaXtslambdai}}
\begin{proof}
For a fixed $j$, let $\big(J_{r_j}^{\lambda}\big)^{-1} = K_j J_{r_j}^{1/\lambda} K_j^{-1}$ be the Jordan decomposition of $\big(J_{r_j}^{\lambda}\big)^{-1}$. From $CW_\lambda^{(j)} = W_\lambda^{(j)}J_{r_j}^\lambda$ we obtain $C^{-1}\big(W_\lambda^{(j)}K_j\big) = \big(W_\lambda^{(j)}K_j\big)J_{r_j}^{1/\lambda}$ which makes $\big(W_\lambda^{(j)}K_j\big)$ as an eligible choice of $U_{\frac{1}{\lambda}}^{(j)}$. Similarly $\big(U_\lambda^{(j)}K_j\big)$ is an eligible choice of $W_{\frac{1}{\lambda}}^{(j)}$. Finally for $1\leq s, t \leq m$ we have
\begin{equation*}
    X_T(k,J,W_\frac{1}{\lambda}^{(t)}, U_\frac{1}{\lambda}^{(s)}) = X_T(k, J, \tilde{U}_\lambda^{(t)}K_t, \tilde{W}_\lambda^{(s)}K_s),
\end{equation*}
for some choice of Jordan chain matrices $\tilde{U}_\lambda^{(t)}, \tilde{W}_\lambda^{(s)}$. By Proposition \ref{thm:choiceofWU} and \eqref{eq:XTsubstitute}, we need show $K_sE_k^{r_s, r_t}K_t^T = \sum_{l=1}^k c_l E_k^{r_s, r_t}$ for some scalars $c_l$. This turns out to be true since matrix $H := K_sE_k^{r_s, r_t}K_t^TE_k^{r_t, r_s}$ is an upper triangular Toeplitz matrix, by the definition of $K_s, K_t$ and the fact that $H$ commutes with $J_{r_s}^\lambda$. 
\par A similar technique also proves the results for $Y_T, X_H, Y_H, X_\mathbb{R}, Y_\mathbb{R}$. 
\end{proof}

\bibliographystyle{siam}
\bibliography{bibliography.bib}

\end{document}